\newtheorem{theorem}{Theorem}[section]
\newtheorem{prop}[theorem]{Proposition}
\newtheorem{lemma}[theorem]{Lemma}
\newtheorem{corollary}[theorem]{Corollary}
\theoremstyle{definition}
\newtheorem{definition}[theorem]{Definition}
\newtheorem{example}[theorem]{Example}
\newtheorem{ass}[theorem]{Assumption}
\theoremstyle{remark}
\newtheorem{remark}[theorem]{Remark}
\newtheorem{notation}[theorem]{Notation}
\numberwithin{equation}{section}
\def\A{{\bf A}}
\def\B{\mathcal{B}}
\def\C{{\bf C}}
\def\E{{E}}
\def\F{\mathcal{F}}
\def\K{{\bf K}}
\def\L{\mathcal{L}}
\def\P{{P}}
\def\R{\mathbb{R}}
\renewcommand{\P}{P}
\def\domain{{\bf D}}
\def\f{b}
\def\g{\sigma}
\def\lip{\kappa}
\def\x{x}
\def\xsops{x^\ast}
\def\period{p}
\def\orbit{{\bf O}}
\def\ve{\varepsilon}
\def\delay{\tau}
\def\exit{\rho}
\def\ball{{\bf B}}
\def\sphere{{\bf S}}
\newcommand{\be}{\begin{equation}}
\newcommand{\ee}{\end{equation}}
\newcommand{\lb}{\left(}
\newcommand{\rb}{\right)}
\newcommand{\lsb}{\left[}
\newcommand{\rsb}{\right]}
\newcommand{\lcb}{\left\{}
\newcommand{\rcb}{\right\}}
\newcommand{\norm}[1]{\lVert{#1}\rVert}
\newcommand{\dotx}{\dot{\x}}
\newcommand{\X}{X}
\begin{document}

\title[Small noise asymptotics for SDDEs]{Exit time asymptotics for small noise stochastic delay differential equations*}

\date{\today}

\thanks{*This research was supported in part by NSF grants DMS-1206772 and DMS-1148284.}

\author{David Lipshutz}
\address{Faculty of Electrical Engineering \\ Technion --- Israel Institute of Technology \\ Haifa, Israel}
\email{lipshutz@technion.ac.il}

\subjclass[2010]{60F10, 34K50, 60J25}

\keywords{Uniform large deviation principle, stochastic delay differential equation, small noise asymptotics, exit time}

\dedicatory{Technion --- Israel Institute of Technology}

\begin{abstract}
Dynamical system models with delayed dynamics and small noise arise in a variety of applications in science and engineering. In many applications, stable equilibrium or periodic behavior is critical to a well functioning system. Sufficient conditions for the stability of equilibrium points or periodic orbits of certain deterministic dynamical systems with delayed dynamics are known and it is of interest to understand the sample path behavior of such systems under the addition of small noise. We consider a small noise stochastic delay differential equation (SDDE) with coefficients that depend on the history of the process over a finite delay interval. We obtain asymptotic estimates, as the noise vanishes, on the time it takes a solution of the stochastic equation to exit a bounded domain that is attracted to a stable equilibrium point or periodic orbit of the corresponding deterministic equation. To obtain these asymptotics, we prove a sample path large deviation principle (LDP) for the SDDE that is uniform over initial conditions in bounded sets. The proof of the uniform sample path LDP uses a variational representation for exponential functionals of strong solutions of the SDDE. We anticipate that the overall approach may be useful in proving uniform sample path LDPs for a broad class of infinite-dimensional small noise stochastic equations. 
\end{abstract}

\maketitle

\newpage

\tableofcontents

\newpage

\section{Introduction}

\subsection{Overview}\label{sec:overview}

Dynamical system models with delayed signaling effects arise in a wide variety of applications in science and engineering. Examples include Internet congestion control models \cite{PAG02,PAP04a,PAP04b,PEE07}, neuronal models \cite{Coombes2009,Roxin2005,Roxin2011} and biochemical models of gene regulation \cite{Anderson2007,Bratsun2005,Mather2009}. In many applications, stable equilibrium behavior or periodic oscillatory behavior is critical to a well functioning system and there is a sizable literature on conditions for the stability of equilibrium points \cite{Driver1965,Hale1965,Huang1989,Razumikhin1956,Razumikhin1960} and periodic orbits \cite{Chow1988,Ivanov1999,Kaplan1975,Kaplan1977,Mallet-Paret2011,Mallet-Paret1996b,Mallet-Paret1996a,Stoffer2008,Stoffer2011,Walther1995,Walther2001b,Walther2001a,Walther2003,Wu2003,Xie1991,Xie1992,Xie1993} of deterministic delay differential equations (DDEs). Frequently, small noise is present and it is of interest to understand its effect on the dynamics, especially near stable equilibrium points and periodic orbits (of the corresponding deterministic system). While solutions of the deterministic system that start near a stable equilibrium point or periodic orbit will remain near the equilibrium point or periodic orbit for all time, solutions of the small noise stochastic system will eventually exit any bounded domain that contains the equilibrium point or periodic orbit (provided the noise coefficient is uniformly nondegenerate, see Assumption \ref{ass:non-degenerate} below). The main focus of this work is to estimate the time it takes solutions of the small noise stochastic system to exit certain bounded domains that contain stable equilibrium points or periodic orbits, from the perspective of large deviations. We anticipate that some of the methods we use to study this problem may be useful in the analysis of exit time problems for other small noise stochastic dynamical systems.

We focus on the following multidimensional small noise stochastic delay differential equation (SDDE) written in integral form:
\begin{equation}\label{eq:sdde}
	\X^\ve(t)=\X^\ve(0)+\int_0^t\f(\X_s^\ve)ds+\sqrt{\ve}\int_0^t\g(\X_s^\ve)dW(s),\qquad t\geq0.
\end{equation}
Here $\delay>0$ is the length of the finite delay interval, $\X^\ve$ is a continuous vector-valued process on $[-\tau,\infty)$, $\X_s^\ve=\{\X_s^\ve(u)=\X^\ve(s+u),u\in[-\delay,0]\}$ is a continuous process on $[-\delay,0]$ that tracks the history of $\X^\ve$ over the delay interval, $\f$ and $\g$ are continuous functions of these path segments, $W$ is a standard multidimensional Brownian motion, the stochastic integral with respect to $W$ is the It\^o integral, and $\ve$ is a small positive parameter discounting the noise coefficient. (The presence of the square root in \eqref{eq:sdde} is a matter of notational preference. Alternatively, one could scale the stochastic integral by $\ve$, in which case $\ve^2$ would appear in place of $\ve$ as the scaling constant in our main results, Theorems \ref{thm:uniformLDP} and \ref{thm:exit}.) The SDDE \eqref{eq:sdde} can be thought of as a small noise perturbation of the following deterministic DDE obtained by setting $\ve=0$ in \eqref{eq:sdde}:
\begin{equation}\label{eq:dde}
	\x(t)=\x(0)+\int_0^t\f(\x_s)ds,\qquad t\geq0.
\end{equation}
Here $\x$ is a continuous vector-valued function on $[-\delay,\infty)$ and $x_s$ is the function on the delay interval $[-\delay,0]$ defined by $x_s(u)=x(s+u)$ for all $u\in[-\delay,0]$. Since the coefficients $\f$ and $\g$ depend on the history of solutions of \eqref{eq:sdde} and \eqref{eq:dde} over the past $\tau$ units of time and solutions are continuous, the natural state space for \eqref{eq:sdde} and \eqref{eq:dde} is the infinite-dimensional set of continuous vector-valued functions on the delay interval $[-\delay,0]$, which we denote by $\C=C([-\delay,0],\R^d)$, where $d$ is the dimension of the vector-valued process $\X^\ve$ and the vector-valued path $\x$. Given an equilibrium point or periodic orbit in $\C$ of the DDE \eqref{eq:dde}, which we denote by $\orbit$ (see Definition \ref{def:periodic} below), a bounded domain $\domain$ in $\C$ that contains $\orbit$, and a solution $\X^\ve$ of the SDDE \eqref{eq:sdde}, let
	$$\exit^\ve=\inf\{t\geq0:\X_t^\ve\not\in\domain\}$$
denote the first time $\X^\ve$ exits $\domain$. If $\orbit$ is stable, $\domain$ is attracted to $\orbit$ in a manner we will make precise, $\X^\ve$ starts sufficiently close to $\orbit$, and $\sigma$ satisfies a uniform nondegeneracy condition (see Assumption \ref{ass:non-degenerate} below), then the expected exit time $E[\exit^\ve]$ will grow exponentially as $\ve$ converges to zero. Our main result is to obtain upper and lower bounds on the exponential rate at which $E[\exit^\ve]$ grows (see Theorem \ref{thm:exit} below). In addition, we show that when $\orbit$ is an equilibrium point in $\C$, and the domain $\domain$ is the uniform ball centered at $\orbit$, then the upper and lower bounds coincide (see Lemma \ref{lem:Vupperlower} below).

In order to obtain exit time asymptotics, we first prove a sample path large deviation principle (LDP) for solutions of the SDDE that is uniform over (initial conditions in) bounded sets (see Theorem \ref{thm:uniformLDP} below). A sample path LDP (for a fixed initial condition) provides upper and lower bounds on the exponential rate of decay, as the noise vanishes, for the probability the solution of the SDDE lies in a measurable set, and the rate of decay is expressed in terms of the large deviations rate function. Such a sample path LDP for an SDDE with additive noise was first established by Langevin, Oliva and de Oilveira \cite{Langevin1991}, and for an SDDE with multiplicative noise by Mohammed and Zhang \cite{Mohammed2006} (see also, the subsequent work by Mo and Luo \cite{Mo2013}). A \emph{uniform} sample path LDP over bounded sets provides upper and lower bounds on the rate of decay that hold uniformly over initial conditions in a bounded set, and the uniformity over bounded sets is crucially used to prove the exit time asymptotics. In the finite-dimensional stochastic differential equation setting, bounded sets are relatively compact, and the uniform sample path LDP over compact sets follows from the sample path LDP for a fixed initial condition using standard techniques. For general stochastic equations with multiplicative noise whose state spaces are not locally compact, the techniques for establishing uniform sample path LDPs over \emph{compact} sets do not readily extend to proving uniform sample path LDPs over \emph{bounded} sets. (For a stochastic equation with additive noise, the contraction principle can be used to prove a uniform LDP over bounded sets; see, e.g., the proof of \cite[Theorem 12.15]{daPrato1992}.) Nevertheless, uniform sample path LDPs over bounded sets have been shown for certain stochastic partial differential equations (SPDEs) with multiplicative noise and used to obtain exit time asymptotics \cite{Cerrai2004,Chenal1997,Sowers1992}. Moreover, Budhiraja, Dupuis and Salins \cite{Budhiraja2017} recently established a uniform sample path LDP over bounded sets for a broad class of SPDEs. Their approach uses a variational representation for (expectations of) exponential functionals of solutions of an SPDE along with weak convergence methods to prove a uniform LDP over bounded sets for a modified version of the SPDE, which is shown to imply a uniform LDP over bounded sets for the original SPDE. One limitation is their approach relies on compactness of an associated semigroup for all times $t>0$. In our SDDE setting, similar compactness type conditions generally only hold for times $t\geq\delay$, where we recall that $\delay>0$ is the length of the delay interval, and so the approach in \cite{Budhiraja2017} is not readily adapted to the SDDE setting. In this work we take a new approach, outlined below, which also uses the variational representation for exponential functionals of solutions, but does not rely on weak convergence methods. Since variational representations have been shown for exponential functionals of solutions for broad classes of stochastic equations, including those driven by finite-dimensional Brownian motions \cite{Boue1998}, infinite-dimensional Brownian motions \cite{Budhiraja2000,Budhiraja2008} and Poisson random measures \cite{Budhiraja2011}, we anticipate that the overall approach introduced here may be useful for proving uniform sample path LDPs over bounded sets for solutions of a variety of infinite-dimensional stochastic equations.

To begin with, we impose a uniform Lipschitz continuity condition on the coefficients (see Assumption \ref{ass:lip} below) that ensures strong existence and uniqueness of solutions of the SDDE. (While we impose a \emph{uniform} Lipschitz condition on the coefficients throughout this work, we explain in Remark \ref{rmk:locallip} that our main exit time asymptotics result, Theorem \ref{thm:exit}, holds under a \emph{local} Lipschitz condition on the coefficients.) This, along with the variational representation for exponential functionals of Brownian motion obtained in \cite[Theorem 3.1]{Boue1998} yields a variational representation for exponential functionals of solutions of the SDDE (see Lemma \ref{lem:varX} below). With the variational representation in hand, we prove a uniform Laplace principle over bounded sets (see Theorem \ref{thm:uniformLP} below). The Laplace principle (with fixed initial condition) establishes asymptotics of exponential functionals of the solution in terms of the large deviations rate function and has been shown by Varadhan \cite{Varadhan1966} and Dupuis and Ellis \cite[Theorem 1.2.3]{Dupuis1997} to be equivalent to the LDP (with fixed initial condition). Variational representations for exponential functionals of strong solutions (of broad classes of stochastic equations) have been used extensively in the weak convergence approach to prove Laplace principles and uniform Laplace principles over compact sets (see, e.g., \cite{Boue1998,Budhiraja2000,Budhiraja2008,Budhiraja2011,Dupuis1997}). Our proof of the uniform Laplace principle over \emph{bounded} sets contains some important distinctions from the weak convergence proof of the uniform Laplace principle over \emph{compact} sets. In the weak convergence approach one first establishes tightness of a family of random variables (over $\ve>0$ sufficiently small and initial conditions in a compact set) that appear in the variational representation and then characterizes the limit of any convergent subsequence as satisfying the Laplace principle upper and lower bounds. (See, for example, the proof of \cite[Theorem 4.3]{Boue1998}. The proof is for a fixed initial condition; however, the tightness arguments can be readily adapted to allow for initial conditions in a compact set.) In our SDDE setting, since bounded sets generally are not relatively compact, the family of random variables that appear in the variational representation is not necessarily tight. Instead, we leverage the fact that we are working with strong solutions, so we can build our family of small noise processes on a common probability space with a common driving Brownian motion. Then, using the Lipschitz continuity of the coefficients and standard stochastic estimates, we prove a key convergence result for the family of random variables that appear in the variational representation (see Lemma \ref{lem:distribution} below). The convergence result is used to prove the uniform Laplace principle over bounded sets, which is shown to imply a uniform LDP over bounded sets.

Lastly, we use the uniform LDP over bounded sets to prove our main exit time asymptotics result (Theorem \ref{thm:exit}). While this proof is structurally similar to the proof of \cite[Theorem 5.7.11]{Dembo1998}, there are several nontrivial modifications to the proof that are due to the fact that the version of the uniform LDP over bounded sets we obtain takes a slightly different form from the uniform LDP over compact sets. Furthermore, the SDDE is degenerate in the sense that the natural state space is infinite-dimensional while the driving Brownian motion is finite-dimensional. This degeneracy restricts the set of paths in $\C$ that a solution of the SDDE can follow when exiting a domain in $\C$ and leads to unresolved challenges in proving the upper and lower bounds for the exit time coincide (see Remark \ref{rmk:VupperVlower} below). However, in the case the domain is a uniform ball centered at an equilibrium point, we prove that the upper and lower bounds coincide (see Lemma \ref{lem:Vupperlower} below).

\subsection{Prior and related work}\label{sec:prior}

The study of exit time asymptotics for finite-dimensional SDEs is a classical subject in the theory of sample path large deviations, beginning with the work of Freidlin and Wentzell \cite{Ventcel1970,Ventcel1972}, which culminated in the books \cite{Freidlin1984,Freidlin2012}. There have been numerous other works related to exit time asymptotics for SDEs, including \cite{Day1990a,Day1990b,Day1992,Dupuis1986,Dupuis1987,Eizenberg1984,Eizenberg1987,Kifer1981}. In \cite[Chapter 12]{daPrato1992}, da Prato and Zabczyk detail a general approach for estimating exit time asymptotics for a class of small noise SPDEs with additive noise. As mentioned above, in \cite{Cerrai2004,Chenal1997,Sowers1992} the authors obtain exit time asymptotics for a variety of SPDEs with multiplicative noise and in \cite{Budhiraja2017} the authors develop a general approach for proving a uniform LDP over bounded sets for a broad class of SPDEs with multiplicative noise and compact semigroups.

There has been limited work on exit time asymptotics for SDDEs, especially those with multiplicative noise. Langevin, Oliva and de Oilveira \cite{Langevin1991} consider exit time asymptotics for SDDEs with additive noise and analyze the quasipotential (see definition \eqref{eq:quasipotential} below) associated with an asymptotically stable equilibrium point of the corresponding DDE. The proof of the exit time asymptotics in the case of additive noise relies on the contraction principle to prove a uniform LDP over bounded sets, and the method does not extend to the case of multiplicative noise. As stated above, Mohammed and Zhang \cite{Mohammed2006} prove a sample path LDP for time-inhomogeneous SDDEs with multiplicative noise and fixed initial condition in the case that $\f$ and $\g$ depend only on time, the current state and the delayed state, i.e., $\f(\X_s^\ve)=f(s,\X^\ve(s),\X^\ve(s-\delay))$ and $\g(\X_s^\ve)=g(s,\X^\ve(s),\X^\ve(s-\delay))$ for suitable functions $f$ and $g$ (see also, the work of Mo and Luo \cite{Mo2013}). We extend their result (in the case of time-homogeneous coefficients) by proving a uniform LDP over bounded sets and also allowing the coefficients to depend on the entire history of the process over the delay interval, not just the current state and delayed state. Lastly, we mention the work of Azencott, Geiger and Ott \cite{Azencott2016} who consider a linear SDDE with additive noise as a local approximation of a nonlinear SDDE and develop methods for efficient numerical computation of the rate function.

\subsection{Outline}\label{sec:outline}

The remainder of this work is organized as follows. Precise definitions for a solution of the small noise SDDE and a solution of the related DDE are given in Section \ref{sec:delay}. The definition of the rate function and our main results on the uniform sample path LDP over bounded sets and exit time asymptotics for the SDDE are presented in Section \ref{sec:main}. Some useful properties of the rate function, including compactness of level sets, are proved in Section \ref{sec:rate}. The proof of the uniform sample path LDP over bounded sets is given in Section \ref{sec:proofLDP}. The proof of the exit time asymptotics for the SDDE is given in Section \ref{sec:proofexit}.

\subsection{Notation}

Let $\R=(-\infty,\infty)$ denote the real numbers. For $r\in\R$, we say $r$ is positive (resp.\ negative, nonnegative, nonpositive) if $r>0$ (resp.\ $r<0$, $r\geq0$, $r\leq 0$). For $r,s\in\R$, we let $r\wedge s=\min(r,s)$ and $r\vee s=\max(r,s)$. For an integer $d\geq 2$, let $\R^d$ denote $d$-dimensional Euclidean space. For a column vector $\nu\in\R^d$, let $\nu^i$ denote its $i$th component, for $i=1,\dots,d$, and let $|\nu|$ denote its Euclidean norm. For positive integers $d$ and $m$, let $\mathbb{M}^{d\times m}$ denote the set of $d\times m$ matrices with real entries. Given a matrix $M\in\mathbb{M}^{d\times m}$, let $M'\in\mathbb{M}^{m\times d}$ denote the transpose of $M$ and $|M|$ denote its Frobenius norm. Given a Polish space $S$, we let $\B(S)$ denote the $\sigma$-algebra of Borel sets in $S$. The following limits will be useful throughout this work (see, e.g., \cite[Lemma 1.2.15]{Dembo1998}): If $\{\ve_n\}_{n=1}^\infty$, $\{a_n\}_{n=1}^\infty$ and $\{b_n\}_{n=1}^\infty$ are sequences of positive real numbers such that $\ve_n\to0$ as $n\to\infty$, then
	\be\label{eq:loglimit}\limsup_{n\to\infty}\ve_n\log\left(a_n+b_n\right)=\max\left(\limsup_{n\to\infty}\ve_n\log a_n,\limsup_{n\to\infty}\ve_n\log b_n\right).\ee
Suppose $B>0$. Then \eqref{eq:loglimit} implies
	\be\label{eq:loglimit1}\liminf_{n\to\infty}\ve_n\log(a_n+e^{B/\ve_n})=\max\lb\liminf_{n\to\infty}\ve_n\log a_n,B\rb.\ee

For a closed interval $I$ in $\R$ and a positive integer $d$, let $C(I,\R^d)$ denote the space of continuous functions from $I$ into $\R^d$. We endow $C(I,\R^d)$ with the topology of uniform convergence on compact intervals in $I$. This is a Polish space. Given $x\in C(I,\R^d)$ and a compact interval $J\subset I$, we define the finite supremum norm of $x$ over $J$ by
	$$\norm{x}_J=\sup_{t\in J}|x(t)|.$$
For a closed interval $I$ in $\R$, a real number $p\geq1$ and a positive integer $m$, let $L^p(I,\R^m)$ denote the Banach space of Lebesgue measurable functions $f$ from $I$ to $\R^m$ with finite $L^p$-norm: 
	$$\norm{f}_{L^p(I,\R^m)}=\lb\int_I|f(s)|^pds\rb^{\frac{1}{p}},$$
where functions that are equal almost everywhere are identified. For $T>0$ we say that a sequence $\{f_n\}_{n=1}^\infty$ in $L^2([0,T],\R^m)$ converges to $f\in L^2([0,T],\R^m)$ in the weak topology if
	$$\lim_{n\to\infty}\int_0^Tf_n(s)g(s)ds=\int_0^Tf(s)g(s)\quad\text{for all }g\in L^2([0,T],\R^m).$$
For $T,N>0$, we let 
	\be\label{eq:LN2}L_N^2([0,T],\R^m)=\lcb f\in L^2([0,T],\R^m):\int_0^T|f(s)|^2ds\leq N\rcb.\ee
When equipped with the weak topology, $L_N^2([0,T],\R^m)$ is metrizable as a compact Polish space (see \cite[Theorem III.1]{Kolmogorov1957}).

Throughout this work we let $\delay>0$ denote a fixed \emph{delay}. For $T\geq0$ we let $d_T(\cdot,\cdot)$ denote the metric on $C([-\delay,T],\R^d)$ induced by the uniform norm $\norm{\cdot}_{[-\delay,T]}$. As noted in Section \ref{sec:overview} above, when $T=0$ we use the abbreviation $\C=C([-\delay,0],\R^d)$, which is the natural state space for solutions of \eqref{eq:sdde} and \eqref{eq:dde}. For a subset $\A\subset\C$ and $\mu>0$ let
\begin{align*}
	\ball(\A,\mu)&=\{\phi\in\C:d_0(\A,\phi)<\mu\},\\
	\sphere(\A,\mu)&=\{\phi\in\C:d_0(\A,\phi)=\mu\}.
\end{align*}
Given a closed interval $I$ of the form $[-\delay,\infty)$ or $[-\delay,T]$ for some $T>0$, a path $x\in C(I,\R^d)$ and a nonnegative time $t\in I$, define $x_t\in \C$ by $x_t(s)=x(t+s)$ for $s\in[-\delay,0]$. We emphasize that $x(t)$ lies in $\R^d$ and $x_t$ lies in $\C$.

By a filtered probability space, we mean a quadruple $(\Omega,\F,\{\F_t,t\geq0\},\P)$, where $\F$ is a $\sigma$-algebra on the outcome space $\Omega$, $\P$ is a probability measure on the measurable space $(\Omega,\F)$, and $\{\F_t,t\geq0\}$ is a filtration of sub-$\sigma$-algebras of $\F$ such that $(\Omega,\F,\P)$ is a complete probability space, and for each $t\geq0$, $\F_t$ contains all $\P$-null subsets of $\F$ and $\F_t=\cap_{s>t}\F_s$. We let $E$ denote expectation under $P$. Given two $\sigma$-finite probability measures $P$ and $Q$ on a measurable space $(\Omega,\F)$, the notation $P\sim Q$ will mean that $P$ and $Q$ are mutually absolutely continuous, i.e., for any $A\in\F$, $P(A)=0$ if and only if $Q(A)=0$. By a continuous process we mean a process with all continuous sample paths.

For a positive integer $m$, by an $m$-dimensional standard Brownian motion, we mean a continuous process $W=\{W(t)=(W^1(t),\dots,W^m(t))',t\geq0\}$ taking values in $\R^m$ such that
\begin{itemize}
	\item[(i)] $W(0)=0$ a.s.,
	\item[(ii)] the coordinate processes, $W^1,\dots,W^m$, are independent,
	\item[(iii)] for each $i=1,\dots,m$, positive integer $n$ and $0\leq t_1<\cdots<t_n<\infty $, the increments $W^i(t_2)-W^i(t_1),W^i(t_3)-W^i(t_2),\dots,W^i(t_n)-W^i(t_{n-1})$ are independent, and
	\item[(iv)] for each $i=1,\dots,m$ and $0\leq s<t<\infty$, $W^i(t)-W^i(s)$ is normally distributed with mean zero and variance $t-s$.
\end{itemize}

\section{Delay differential equations}\label{sec:delay}

In this section we introduce two delay equations --- a small noise stochastic equation and a corresponding deterministic equation. Recall that we are fixing $\delay>0$, which will be referred to as the delay. In addition, we fix positive integers $d$ and $m$, recall that $\C=C([-\delay,0],\R^d)$ and fix functions $\f:\C\to\R^d$ and $\g:\C\to\mathbb{M}^{d\times m}$ satisfying the following uniform Lipschitz continuity condition. 

\begin{ass}\label{ass:lip}
There exists $\lip_1>0$ such that
\begin{equation}\label{eq:lip}
	|\f(\phi)-\f(\psi)|^2+|\g(\phi)-\g(\psi)|^2\leq \lip_1\norm{\phi-\psi}_{[-\delay,0]}^2\quad\text{for all }\phi,\psi\in\C.
\end{equation} 
\end{ass}

\begin{remark}
A simple consequence of Assumption \ref{ass:lip} is that there exists $\lip_2>0$ such that
\begin{align}\label{eq:lipbound}
	|\f(\phi)|^2+|\sigma(\phi)|^2\leq\lip_2\left(1+\norm{\phi}_{[-\delay,0]}^2\right)\quad\text{for all }\phi\in\C.
\end{align}
\end{remark}

We impose a uniform Lipschitz continuity condition to ensure existence and uniqueness of strong solutions of the SDDE. In general, a local Lipschitz continuity condition with a linear growth condition is sufficient; however, for convenience we impose the uniform condition. In Remark \ref{rmk:locallip} below, we note that our main result Theorem \ref{thm:exit} on the exit time asymptotics for the SDDE is readily extended to the case of locally Lipschitz coefficients.

\subsection{Small noise stochastic delay differential equation}\label{sec:sdde}

Throughout this section we fix $\ve>0$.

\begin{definition}\label{def:sdde}
Given an $m$-dimensional Brownian motion $W=\{W(t),t\geq0\}$ on a filtered probability space $(\Omega,\F,\{\F_t,t\geq0\},\P)$, a (strong) solution of the SDDE associated with $(b,\sigma,\ve)$ is a $d$-dimensional continuous process $\X^\ve=\{\X^\ve(t),t\geq-\delay\}$ on $(\Omega,\F,\P)$ such that $\X^\ve(t)$ is $\F_0$-measurable for each $t\in[-\delay,0]$, $\X^\ve(t)$ is $\F_t$-measurable for each $t>0$, and a.s.\ \eqref{eq:sdde} holds.
\end{definition}

The natural initial condition is a $\C$-valued random element $\xi$ on $(\Omega,\F_0,\P)$.

\begin{prop}\label{prop:sddereu}
Given an $m$-dimensional Brownian motion $W=\{W(t),t\geq0\}$ on a filtered probability space $(\Omega,\F,\{\F_t,t\geq0\},\P)$ and a $\C$-valued random element $\xi$ on $(\Omega,\F_0,\P)$, there exists a unique solution $\X^\ve$ of the SDDE with initial condition $\X_0^\ve=\xi$ and driving Brownian motion $W$. Furthermore, the process satisfies the strong Markov property.
\end{prop}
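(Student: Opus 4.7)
The plan is to construct the solution via global Picard iteration on an arbitrary finite interval $[0,T]$, concatenate to produce a solution on all of $[0,\infty)$, and then deduce the strong Markov property from pathwise uniqueness together with the time-homogeneity of the coefficients. The guiding principle throughout is to work with the supremum norm $\norm{\cdot}_{[-\delay,t]}$, so that the delay-dependence of $\f$ and $\g$ through the path segments $\X^\ve_s$ is absorbed naturally by the Lipschitz bound in Assumption \ref{ass:lip}.

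First, I would fix $T>0$ and define a sequence $\{\X^{\ve,(n)}\}_{n\geq 0}$ of continuous $\{\F_t\}$-adapted processes on $[-\delay,T]$ by setting $\X^{\ve,(n)}(t)=\xi(t)$ for $t\in[-\delay,0]$ and each $n$, $\X^{\ve,(0)}(t)=\xi(0)$ for $t\in[0,T]$, and recursively
\[
\X^{\ve,(n+1)}(t)=\xi(0)+\int_0^t\f(\X^{\ve,(n)}_s)\,ds+\sqrt{\ve}\int_0^t\g(\X^{\ve,(n)}_s)\,dW(s),\quad t\in[0,T].
\]
An induction using the linear growth bound \eqref{eq:lipbound}, Cauchy--Schwarz on the drift term, and the Burkholder--Davis--Gundy inequality on the stochastic integral shows that each iterate lies in $L^2(\Omega;C([-\delay,T],\R^d))$ when $\xi$ is square-integrable; a standard truncation (stopping $\xi$ at level $M$, then sending $M\to\infty$) removes the square-integrability hypothesis.

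Next, I would estimate $\Delta_n(t):=\E[\norm{\X^{\ve,(n+1)}-\X^{\ve,(n)}}_{[-\delay,t]}^2]$. Applying Cauchy--Schwarz, BDG, and \eqref{eq:lip} term by term gives a bound of the form
\[
\Delta_n(t)\leq C\int_0^t \Delta_{n-1}(s)\,ds,\quad t\in[0,T],
\]
for a constant $C=C(T,\ve,\lip_1)$. Iterating yields $\Delta_n(T)\leq (CT)^n\Delta_0(T)/n!$, and Chebyshev together with Borel--Cantelli then implies that $\{\X^{\ve,(n)}\}$ is almost surely uniformly Cauchy on $[-\delay,T]$. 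The limit $\X^\ve$ is continuous, $\{\F_t\}$-adapted, agrees with $\xi$ on $[-\delay,0]$, and, by passing to the limit in the Picard equation using dominated convergence and isometry for the Itô integral, satisfies \eqref{eq:sdde} on $[0,T]$. Applying the same BDG--Lipschitz estimate to the difference of two solutions together with Gronwall's inequality gives pathwise uniqueness on $[0,T]$, and since $T$ is arbitrary, uniqueness lets us unambiguously extend $\X^\ve$ to $[0,\infty)$.

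Finally, for the strong Markov property, given a finite $\{\F_t\}$-stopping time $\eta$, the shifted process $\wt{W}(\cdot):=W(\eta+\cdot)-W(\eta)$ is a Brownian motion independent of $\F_\eta$. Because $\f$ and $\g$ do not depend explicitly on time, the time-shifted process $\{\X^\ve(\eta+t),t\geq-\delay\}$ satisfies the SDDE \eqref{eq:sdde} driven by $\wt W$ with initial segment $\X^\ve_\eta$; pathwise uniqueness then identifies it, $P$-a.s., with the solution constructed from $(\X^\ve_\eta,\wt W)$ as a measurable functional, which yields the strong Markov property in the standard way. The main obstacle is structural rather than technical: because the coefficients depend on the full path segment $\X^\ve_s\in\C$, the interval-by-interval construction on $[k\delay,(k+1)\delay]$ used for pointwise-delayed equations is unavailable, and one is forced to carry out the Picard iteration globally on the infinite-dimensional path space $C([-\delay,T],\R^d)$; once the estimates are stated in terms of supremum norms, however, the argument proceeds essentially in parallel with the classical SDE case.
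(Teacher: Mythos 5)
The paper's proof of this proposition consists solely of a citation to Mohammed's monograph (\cite[Theorems 2.1 and 2.2]{Mohammed1984}), so your proposal is supplying the argument that the paper delegates to references. Your Picard-iteration construction is essentially the same method used in those cited theorems and is correct: the global iteration on $[-\delay,T]$ with estimates in the supremum norm $\norm{\cdot}_{[-\delay,t]}$, the $(CT)^n/n!$ bound on $\Delta_n$, Borel--Cantelli for almost sure uniform convergence, and Gronwall for pathwise uniqueness are all standard and sound, and your observation that the path-segment dependence of $\f,\g$ precludes the interval-by-interval solve used for point-delay equations is exactly the right structural point. The only place you are somewhat terse is the strong Markov property: beyond pathwise uniqueness, time-homogeneity, and independence of the post-$\eta$ Brownian increments from $\F_\eta$, one also needs joint Borel measurability of the solution map $(\phi,w)\mapsto\Lambda^\ve_{\phi,T}(w)$ in the initial segment and the driving path (in this paper that map appears only with $\phi$ fixed in Corollary \ref{cor:strong}), so that the transition kernel $(\phi,A)\mapsto P(\X^{\ve,\phi}_t\in A)$ is a genuine Markov kernel on the infinite-dimensional state space $\C$; this is exactly the content of \cite[Theorem 2.2]{Mohammed1984} and should be flagged rather than folded into ``the standard way.''
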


\begin{remark}
Here uniqueness means that any two solutions of the SDDE on a filtered probability space $(\Omega,\F,\{\F_t,t\geq0\},P)$ with common initial condition $\xi$ and driving Brownian motion $W$ are indistinguishable.
\end{remark}

\begin{proof}
See, e.g., \cite[Theorem 2.1]{Mohammed1984} and \cite[Theorem 2.2]{Mohammed1984}.
\end{proof}

As a consequence of Proposition \ref{prop:sddereu} we have the following corollary on the existence of a measurable function that takes a Brownian motion to the solution of the SDDE. The existence of such a function is important for our proof of the sample path LDP, as shown in Section \ref{sec:variational}.

\begin{corollary}\label{cor:strong}
For $\phi\in\C$ and $T>0$, there exists a Borel measurable function
	\be\label{eq:Lambda}\Lambda_{\phi,T}^\ve:C([0,T],\R^m)\to C([-\delay,T],\R^d)\ee
such that given an $m$-dimensional Brownian motion $W$ on any filtered probability space $(\Omega,\F,\{\F_t,t\geq0\},\P)$, the process $\Lambda_{\phi,T}^\ve(W|_{[0,T]})=\{\Lambda_\phi^\ve(W|_{[0,T]})(t),t\in[-\delay,T]\}$ is the unique solution of the SDDE with initial condition $\phi$ and driving Brownian motion $W$, restricted to the interval $[-\delay,T]$.
\end{corollary}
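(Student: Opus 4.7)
The plan is to construct the measurable functional $\Lambda_{\phi,T}^\ve$ via Picard iteration, exploiting the fact that each Picard iterate can be realized as an explicit Borel measurable functional of the Brownian path and that the iterates converge to the unique strong solution guaranteed by Proposition \ref{prop:sddereu}. This is the standard route for making the Yamada--Watanabe-type statement precise in the SDDE setting under the uniform Lipschitz condition \eqref{eq:lip}.

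First, I would define Picard iterates on $(\Omega,\F,\{\F_t\},P)$. Set $\X^{\ve,0}(t)=\phi(0)$ for $t\in[0,T]$ and $\X_0^{\ve,0}=\phi$, and for $n\geq 0$ let
\be
\X^{\ve,n+1}(t)=\phi(0)+\int_0^t\f(\X_s^{\ve,n})\,ds+\sqrt{\ve}\int_0^t\g(\X_s^{\ve,n})\,dW(s),\qquad t\in[0,T],
\ee
with $\X_0^{\ve,n+1}=\phi$. I would then argue that each $\X^{\ve,n}$ admits a representation $\X^{\ve,n}=\Lambda^n(W|_{[0,T]})$ for some Borel measurable map $\Lambda^n:C([0,T],\R^m)\to C([-\delay,T],\R^d)$. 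For the Lebesgue-integral part this is clear, and for the It\^o-integral part one approximates the integrand on a dyadic partition by left-endpoint evaluations, writes the resulting Riemann--It\^o sums as continuous functionals of $W|_{[0,T]}$ and of $\Lambda^{n-1}$ (measurable by induction), and passes to the limit along a subsequence converging uniformly in $t$ almost surely; the limit of Borel measurable maps on the set where the limit exists (in the uniform metric on $C([-\delay,T],\R^d)$) is Borel measurable, and one sets the map to an arbitrary fixed element (say the constant path $\phi$) off that set.

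Next, using \eqref{eq:lip} together with the Burkholder--Davis--Gundy and Gronwall inequalities, I would obtain the standard Picard contraction estimate
\be
E\Bigl[\sup_{-\delay\leq s\leq t}|\X^{\ve,n+1}(s)-\X^{\ve,n}(s)|^2\Bigr]\leq \frac{(Ct)^{n}}{n!}\,C'
\ee
for $t\in[0,T]$ and constants $C,C'$ depending on $\lip_1$, $\ve$ and $T$, hence $\{\X^{\ve,n}\}$ is Cauchy in $L^2(\Omega; C([-\delay,T],\R^d))$ and, along a subsequence $n_k$, converges $P$-a.s.\ uniformly on $[-\delay,T]$ to a limit $\wt\X$. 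The standard stability of the stochastic integral under such convergence shows that $\wt\X$ satisfies the SDDE with initial condition $\phi$ on $[-\delay,T]$, so by uniqueness in Proposition \ref{prop:sddereu} it agrees $P$-a.s.\ on $[-\delay,T]$ with the restriction of the strong solution $\X^\ve$.

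Finally, I would define
\be
\Lambda_{\phi,T}^\ve(w)=\lim_{k\to\infty}\Lambda^{n_k}(w)\quad\text{on}\quad \{w\in C([0,T],\R^m):\Lambda^{n_k}(w)\text{ converges uniformly on }[-\delay,T]\},
\ee
and set $\Lambda_{\phi,T}^\ve(w)$ equal to the constant path $\phi$ elsewhere. The convergence set is Borel (an intersection of countably many Borel sets expressing a Cauchy condition in the complete metric on $C([-\delay,T],\R^d)$), and on it the limit of Borel maps is Borel; off it the map is constant. Hence $\Lambda_{\phi,T}^\ve$ is Borel measurable, and by the a.s.\ convergence above, $\Lambda_{\phi,T}^\ve(W|_{[0,T]})=\X^\ve|_{[-\delay,T]}$ $P$-a.s.\ on any filtered probability space carrying an $m$-dimensional Brownian motion $W$. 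The main technical point to handle carefully is the measurable dependence of the It\^o integral on the Brownian path as a map between the Polish spaces $C([0,T],\R^m)$ and $C([-\delay,T],\R^d)$; everything else is routine Picard iteration under \eqref{eq:lip}.
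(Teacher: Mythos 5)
Your proposal is correct and essentially self-contained, whereas the paper's own proof is a one-line citation: it notes that strong existence and uniqueness (Proposition~\ref{prop:sddereu}) on the canonical Wiener space, where $W$ is the coordinate process and the filtration is the $P$-augmentation of $\sigma(W)$, yields the measurable map by ``a standard method,'' referring to Rogers and Williams for details. Your constructive Picard-iteration route arrives at the same place but buys more: it exhibits $\Lambda_{\phi,T}^\ve$ as an explicit a.s.\ limit of Borel maps, and it makes the transfer to an arbitrary filtered probability space more transparent, since the Picard iterates can be realized directly on any such space and their limit identified with the strong solution via uniqueness. Two points you flag but leave implicit deserve a sentence each in a final write-up. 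First, the induction hinges on $\Lambda^n$ being not merely Borel but \emph{non-anticipative}, i.e.\ $\Lambda^n(w)(t)$ should depend only on $w|_{[0,t]}$; this is preserved by left-endpoint Riemann--It\^o sums but must be stated, since it is what makes the integrand $\g\circ\Lambda^n(W)_s$ progressively measurable and the next It\^o integral well defined. Second, the final assertion that $\Lambda_{\phi,T}^\ve(W|_{[0,T]})$ solves the SDDE on \emph{any} filtered probability space needs a short argument: the convergence set has full Wiener measure (so it has full measure under the law of $W|_{[0,T]}$ on any space), and the Riemann-sum construction of each $\Lambda^{n}$ guarantees that $\Lambda^{n}(W|_{[0,T]})$ is a version of the $n$-th Picard iterate on the new space because the It\^o integral is determined (up to indistinguishability) by the joint law of integrand and integrator. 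With these two remarks made explicit, the argument is complete and matches the intended content of the paper's citation.
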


\begin{proof}
Given $\phi\in\C$ and $T>0$, the existence of $\Lambda_{\phi,T}^\ve$ follows from the fact that, by Proposition \ref{prop:sddereu}, there exists a unique solution of the SDDE on any filtered probability space $(\Omega,\F,\{\F_t,t\geq0\},\P)$ that supports an $m$-dimensional Brownian motion. In particular, by taking the canonical set up where $(\Omega,\F,\P)$ is $m$-dimensional Wiener space, $W=\{W(\omega,t)=\omega(t),\omega\in\Omega,t\geq0\}$ is the coordinate process and $\{\F_t,t\geq0\}$ is the $\P$-augmented filtration generated by $W$, the existence of the measurable map follows via a standard method. For a detailed outline of this method, we refer the reader to \cite[Chapter V.10]{Rogers2000a}.
\end{proof}

\emph{Throughout the remainder of this work we fix an $m$-dimensional Brownian motion $W$ on a filtered probability space $(\Omega,\F,\{\F_t,t\geq0\},\P)$.}

\begin{notation}
Given $\phi\in\C$ we write $\X^{\ve,\phi}$ to denote the unique solution of the SDDE with initial condition $\X_0^{\ve,\phi}=\phi$ and driving Brownian motion $W$.
\end{notation}

\subsection{Deterministic delay differential equation}

\begin{definition}\label{def:dder}
A solution of the DDE associated with $\f$ is a continuous function $\x\in C([-\delay,\infty),\R^d)$ that satisfies \eqref{eq:dde}.
\end{definition}

Under Assumption \ref{ass:lip}, for each $\phi\in \C$ there exists a unique solution of the DDE with initial condition $\phi$ (see, e.g., \cite[Theorem 2.3]{Hale1993}).

\begin{notation}
Given $\phi\in\C$, we let $\x^\phi$ denote the unique solution of the DDE with initial condition $\phi$.
\end{notation}

\begin{remark}\label{rmk:xdiff}
It follows from \eqref{eq:dde}, the continuity of the function $t\to\x_t$ from $[0,\infty)$ to $\C$ and the continuity of $\f$ that any solution $\x$ of the DDE is continuously differentiable on $(0,\infty)$ and its derivative satisfies $\frac{d\x(t)}{dt}=\f(\x_t)$ for all $t>0$.
\end{remark}

\begin{definition}\label{def:periodic}
A solution $\xsops$ of \eqref{eq:dde} is called \emph{periodic} with period $\period>0$ if $\xsops(t+\period)=\xsops(t)$ for all $t\geq-\delay$. Given a periodic solution $\xsops$ of \eqref{eq:dde}, we define its \emph{orbit} $\orbit$ in $\C$ by $\orbit=\{\xsops_t,t\in[0,\period)\}$.
\end{definition}

\begin{remark}
Given a periodic solution $\xsops$ with period $\period>0$, observe that $\xsops$ is also periodic with period $k\period$ for any positive integer $k$. Thus, the period is not unique; however, the orbit $\orbit$ in $\C$ is unique.
\end{remark}

\begin{definition}
A vector $\nu^\ast\in\R^d$ is called an \emph{equilibrium point} of \eqref{eq:dde} if the constant function $\xsops\in C([-\delay,\infty),\R^d)$ given by $\xsops(\cdot)\equiv\nu^\ast$ is a solution of the DDE.
\end{definition}

\begin{remark}
Suppose $\nu^\ast\in\R^d$ is an equilibrium point of \eqref{eq:dde}. Then $\f(\phi^\ast)=0$, where the constant function $\phi^\ast\in\C$ is given by $\phi^\ast(\cdot)\equiv\nu^\ast$. In addition, for any $\period>0$, $\xsops(\cdot)\equiv\nu^\ast$ is a periodic solution of \eqref{eq:dde} with period $\period$ and orbit $\orbit=\{\phi^\ast\}$,
\end{remark}

\begin{definition}\label{def:stable}
Given a periodic solution $\xsops$ of \eqref{eq:dde}, we say the orbit $\orbit$ of $\xsops$ is \emph{stable} if for each $\delta>0$ there exists $\mu\in(0,\delta]$ such that
	\be\label{eq:stable}\x_t^\phi\in\ball(\orbit,\delta)\quad\text{for all }\phi\in\ball(\orbit,\mu),\;t\geq0.\ee
\end{definition}

\begin{definition}\label{def:attracted}
Given a periodic solution $\xsops$ of \eqref{eq:dde} and a domain $\domain$ in $\C$ that contains the orbit $\orbit$ of $\xsops$, we say $\domain$ is \emph{uniformly attracted} to $\orbit$ if for each $\delta>0$ there exists $T>0$ such that 
	\be\label{eq:attracted}d_0(\orbit,\x_t^\phi)\leq\delta\quad\text{for all }\phi\in\domain,\; t\geq T.\ee
\end{definition}

We close this section with an example of a stable equilibrium point of a one-dimensional DDE and a stable periodic orbit of a one-dimensional DDE. We also provide examples of domains that are attracted to their respective orbits.

\begin{example}
Consider the following one-dimensional linear DDE (in differential form):
	\be\label{eq:1dlinear}\frac{d\x(t)}{dt}=-A\x(t)-B\x(t-\delay),\qquad t\geq0,\ee
where $B>A\geq0$. Then zero is an equilibrium point of the DDE and \eqref{eq:1dlinear} has characteristic equation
	\be\label{eq:characteristic}\lambda+A+Be^{-\lambda\delay}=0.\ee
Let $\theta_0$ be the unique solution in $[\pi/2,\pi)$ to $\cos\theta_0=-A/B$, and define
	$$\tau_0=\frac{\theta_0}{\sqrt{A^2+B^2}}.$$
If $\tau<\tau_0$, then every solution of the characteristic equation has negative real part and it follows that the orbit $\orbit=\{\phi^\ast\}$, where $\phi^\ast\in\C$ is given by $\phi^\ast(\cdot)\equiv0$, is stable and every bounded domain $\domain$ in $\C$ that contains $\orbit$ is uniformly attracted to $\orbit$ (see, e.g., \cite[Theorem 4.3]{Smith2011}).
\end{example}

\begin{definition}
Suppose $d=1$ and $\xsops$ is a periodic solution of \eqref{eq:dde}. We say $\xsops$ is a \emph{slowly oscillating periodic solution} if there exist $-\delay\leq z_0<z_1<z_2$ such that $z_1-z_0>\delay$, $z_2-z_1>\delay$, $z_2-z_0=\period$, $\xsops(t)>0$ for all $z_0<t<z_1$, and $\xsops(t)<0$ for all $z_1<t<z_2$.
\end{definition}

\begin{example}
Consider the following one-dimensional nonlinear DDE:
	\be\label{eq:ddeSOPS}\frac{d\x(t)}{dt}=f(x(t-\delay)),\qquad t\geq0,\ee
where $f:\R\to\R$ is a continuously differentiable function with $rf(r)<0$ for all $r\neq0$, $f(r)\to a$ as $r\to-\infty$ for some $a>0$, $f(r)\to-b$ as $r\to\infty$ for some $b>0$, $f'\in L^1(\R,\R)$ and $rf'(r)\to 0$ as $r\to\pm\infty$. By \cite[Theorem 1]{Xie1991}, there exists $\delay_1>0$ such that for all $\delay\geq\delay_1$ there exists a unique (up to time translation) slowly oscillating periodic solution $\xsops$ of \eqref{eq:ddeSOPS}, its orbit $\orbit$ is stable and for every $\delta>0$ sufficiently small, the domain $\ball(\orbit,\delta)$ is uniformly attracted to $\orbit$.
\end{example}

\section{Main results}\label{sec:main}

In this section we summarize our main results on the small noise asymptotics for solutions of the SDDE.

\subsection{The rate function}\label{sec:mainrate}

In this section we introduce the rate function and provide conditions under which the rate function can be explicitly evaluated. In Section \ref{sec:rate} we prove some useful properties of the rate function, including compactness of level sets.

Given $T>0$ and $\x\in C([-\delay,T],\R^d)$, let $U_T(\x)$ denote the (possibly empty) set of $u$ in $L^2([0,T],\R^m)$ such that
	\be\label{eq:xv}\x(t)=\x(0)+\int_0^tb(\x_s)ds+\int_0^t\sigma(\x_s)u(s)ds\quad\text{for all }t\in[0,T].\ee
For $\phi\in \C$ and $T>0$, define the \emph{rate function} $I_T^\phi:C([-\delay,T],\R^d)\to[0,\infty]$ by
\begin{equation}\label{eq:rate}
	I_T^\phi(\x)=
	\begin{cases}
		{\displaystyle\inf_{u\in U_T(\x)}\frac{1}{2}\int_0^T}|u(s)|^2ds&\text{if }\x_0=\phi\text{ and }U_T(\x)\neq\emptyset,\\[3ex]
		\newline\infty&\text{otherwise}.
	\end{cases}
\end{equation}

\begin{remark}\label{rmk:ratexphi}
Given $\phi\in\C$ it follows from \eqref{eq:dde} and \eqref{eq:xv} that $u(\cdot)\equiv0$ lies in $U_T(x^\phi)$ and so $I_T^\phi(x^\phi)=0$, where we recall that $\x^\phi$ denotes the unique solution of the DDE with initial condition $\phi$.
\end{remark}

\begin{notation}
Given $T>0$ and $\x\in C([-\delay,T],\R^d)$, we write $I_T(\x)$ to denote $I_T^{\x_0}(\x)$. It follows from \eqref{eq:rate} that $I_T(\x)$ satisfies
\begin{equation}\label{eq:rate1}
	I_T(\x)=
	\begin{cases}
		{\displaystyle\inf_{u\in U_T(\x)}\frac{1}{2}\int_0^T}|u(s)|^2ds&\text{if }U_T(\x)\neq\emptyset,\\[3ex]
		\newline\infty&\text{otherwise}.
	\end{cases}
\end{equation}
\end{notation}

\begin{notation}
Given $\phi\in\C$, $T>0$ and a subset $A\subset C([-\delay,T],\R^d)$, we let $I_T^\phi(A)=\inf_{\x\in A}I_T^\phi(\x)$ and $I_T(A)=\inf_{\x\in A}I_T(\x)$.
\end{notation}

In general, the variational form \eqref{eq:rate} of the rate function is difficult to explicitly evaluate. However, when $m=d$ and the following uniform ellipticity condition holds, we can explicitly evaluate the variational form.

\begin{ass}\label{ass:non-degenerate}
The diffusion coefficient $a=\sigma\sigma'$ is uniformly elliptic, i.e., there exists $c>0$ such that $\nu'a(\phi)\nu\geq c|\nu|^2$ for all $\phi\in\C$ and $\nu\in\R^d$.
\end{ass}

\begin{remark}\label{rmk:non-degenerate}
Under Assumption \ref{ass:non-degenerate}, since $a$ is continuous and uniformly elliptic, it follows from standard arguments that $a^{-1}$ is well-defined, continuous and uniformly bounded on $\C$. Thus, if $m=d$, then $\sigma^{-1}$ is well-defined and given by $\sigma^{-1}=\sigma'a^{-1}$.
\end{remark}

\begin{lemma}\label{lem:rateAC}
Suppose $m=d$ and Assumption \ref{ass:non-degenerate} holds. Then $I_T(\x)=J_T(\x)$ for all $\x\in C([-\delay,T],\R^d)$, where $J_T:C([-\delay,T]:\R^d)\to[0,\infty]$ is given by
\begin{align*}
	J_T(\x)=
	\begin{cases}
		{\displaystyle\int_0^T\Lambda(\x_s,\dotx(s))ds}&\text{if }\x\text{ is absolutely continuous on }[0,T],\\[3ex]
		\newline\infty&\text{otherwise}.
	\end{cases}
\end{align*}
Here $\Lambda:\C\times\R^d\to\R_+$ is the continuous function defined by
	\be\label{eq:Lphir}\Lambda(\phi,\nu)=\frac{1}{2}(\f(\phi)-\nu)'(a(\phi))^{-1}(\f(\phi)-\nu)\quad\text{for }(\phi,\nu)\in \C\times\R^d,\ee
and $\dotx\in L^1([0,T],\R^d)$ is such that 
	\be\label{eq:xdotx}\x(t)=\x(0)+\int_0^t\dotx(s)ds\quad\text{for all }t\in[0,T].\ee
\end{lemma}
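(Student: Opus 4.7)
My plan is to exploit the invertibility of $\sigma$ (which follows from Remark \ref{rmk:non-degenerate} once $m=d$) to parametrize $U_T(\x)$ explicitly in terms of $\dotx$, then read off the cost algebraically. The strategy is to compare $I_T(\x)$ and $J_T(\x)$ in three cases according to the regularity of $\x$.

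First, I would record the structural observation: if $u\in U_T(\x)$, then since $b$ and $\sigma$ are continuous on the compact set $\{\x_s:s\in[0,T]\}\subset \C$ and $u\in L^2([0,T],\R^m)\subset L^1$, the integrand $\f(\x_s)+\g(\x_s)u(s)$ lies in $L^1([0,T],\R^d)$. Thus \eqref{eq:xv} forces $\x$ to be absolutely continuous on $[0,T]$ with
\[
\dotx(s)=\f(\x_s)+\g(\x_s)u(s)\qquad\text{a.e.\ }s\in[0,T],
\]
which (since $b(x_\cdot)$ is bounded) additionally forces $\dotx\in L^2$. Consequently, if $\x$ fails to be absolutely continuous then $U_T(\x)=\emptyset$ and both $I_T(\x)$ and $J_T(\x)$ equal $\infty$.

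Next, assume $\x$ is absolutely continuous on $[0,T]$. Under Assumption \ref{ass:non-degenerate} with $m=d$, Remark \ref{rmk:non-degenerate} gives $\g^{-1}=\g'a^{-1}$, which is uniformly bounded on $\C$. I would define
\[
\tilde{u}(s)=\g(\x_s)^{-1}\lb\dotx(s)-\f(\x_s)\rb,\qquad s\in[0,T],
\]
and argue in two sub-cases. If $\dotx\notin L^2$, then $\tilde u\notin L^2$ and (by the previous paragraph) $U_T(\x)=\emptyset$, so $I_T(\x)=\infty$; on the other hand, uniform ellipticity of $a^{-1}$ yields a constant $c'>0$ with $\Lambda(\phi,\nu)\geq c'|\f(\phi)-\nu|^2$, and since $\f(\x_\cdot)$ is bounded this forces $J_T(\x)=\infty$. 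If instead $\dotx\in L^2$, boundedness of $\g^{-1}$ gives $\tilde u\in L^2([0,T],\R^m)$, and substituting into \eqref{eq:xv} shows $\tilde u\in U_T(\x)$. Uniqueness is immediate: any $u\in U_T(\x)$ must satisfy $\g(\x_s)u(s)=\dotx(s)-\f(\x_s)$ a.e., so invertibility of $\g(\x_s)$ gives $u=\tilde u$ a.e.

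Finally, I would verify the pointwise identity $\tfrac{1}{2}|\tilde u(s)|^2=\Lambda(\x_s,\dotx(s))$. Using $\g^{-1}=\g'a^{-1}$ and symmetry of $a^{-1}$,
\[
(\g^{-1})'\g^{-1}=a^{-1}\g\g'a^{-1}=a^{-1}aa^{-1}=a^{-1},
\]
so $\tfrac12|\tilde u(s)|^2=\tfrac12(\dotx(s)-\f(\x_s))'a(\x_s)^{-1}(\dotx(s)-\f(\x_s))=\Lambda(\x_s,\dotx(s))$. Integrating over $[0,T]$ and invoking the uniqueness of the control then yields $I_T(\x)=\tfrac12\int_0^T|\tilde u(s)|^2ds=J_T(\x)$, completing all three cases.

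There is no serious obstacle here; the result is essentially an algebraic exercise once the square-matrix invertibility from Assumption \ref{ass:non-degenerate} is in hand. The only subtlety to watch is the $L^2$ vs.\ $L^1$ bookkeeping in the "absolutely continuous but $\dotx\notin L^2$" case, which is handled cleanly by the two-sided bounds on $a^{-1}$ supplied by uniform ellipticity and continuity.
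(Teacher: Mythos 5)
Your proposal is correct and uses essentially the same argument as the paper: when $m=d$, invertibility of $\sigma$ (via $\sigma^{-1}=\sigma'a^{-1}$) makes $U_T(\x)$ a singleton $\{\tilde u\}$ when nonempty, and the algebraic identity $(\sigma^{-1})'\sigma^{-1}=a^{-1}$ gives the pointwise equality $\tfrac12|\tilde u|^2=\Lambda(\x_\cdot,\dotx)$, from which $I_T=J_T$ follows immediately. One small slip worth fixing: the lower bound $\Lambda(\phi,\nu)\geq c'|\f(\phi)-\nu|^2$ in your middle case is not a consequence of ``uniform ellipticity of $a^{-1}$''---Assumption \ref{ass:non-degenerate} gives uniform ellipticity of $a$ (hence $a^{-1}$ is bounded \emph{above}), whereas the lower bound you need requires $a^{-1}$ bounded \emph{below}, equivalently $a$ bounded above on the compact set $\{\x_s:s\in[0,T]\}$, which holds by continuity (as your closing remark indicates) rather than by ellipticity; alternatively you can bypass that bound entirely by using the pointwise identity $\Lambda(\x_s,\dotx(s))=\tfrac12|\tilde u(s)|^2$ together with $|\tilde u(s)|\geq |\dotx(s)-\f(\x_s)|/\sup_s|\g(\x_s)|$, which is closer to how the paper handles finiteness.
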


\begin{remark}
Suppose $\x\in C([-\delay,T],\R^d)$ is absolutely continuous. Since $s\to\x_s$ is a continuous function from $[0,T]$ to $\C$, $\dot\x:[0,T]\to\R^d$ is Lebesgue measurable and $\Lambda:\C\times\R^d\to\R_+$ is continuous, it follows that the function $s\to \Lambda(\x_s,\dot\x(s))$ from $[0,T]$ to $\R_+$ is Lebesgue measurable. However, the function $s\to \Lambda(\x_s,\dot\x(s))$ need not be integrable, in which case we adopt the convention that $J_T(\x)$ is infinite.
\end{remark}

The proof of Lemma \ref{lem:rateAC} is given in Section \ref{sec:rateAC}.

\subsection{Uniform large deviation principle}

Throughout this section we fix $T>0$. Given a closed set $F\subset C([-\delay,T],\R^d)$ and $\eta\geq0$, define the enlarged closed set $F^\eta\subset C([-\delay,T],\R^d)$ by
	\be\label{eq:Feta}F^\eta=\lcb x\in C([-\delay,T],\R^d):d_T(x,F)\leq\eta\rcb,\ee
where we recall that $d_T(\cdot,\cdot)$ is the metric on $C([-\delay,T],\R^d)$ induced by the uniform norm $\norm{\cdot}_{[-\delay,T]}$. Given an open set $G\subset C([-\delay,T],\R^d)$ and $\eta\geq0$, define the shrunk open set $G_\eta\subset C([-\delay,T],\R^d)$ by
	\be\label{eq:Geta}G_\eta=\lcb x\in C([-\delay,T],\R^d):d_T(x,G^c)>\eta\rcb,\ee
where $G^c$ denotes the complement of $G$ in $C([-\delay,T],\R^d)$.

\begin{theorem}\label{thm:uniformLDP}
Suppose Assumption \ref{ass:lip} holds. Let $\K\subset\C$ be a bounded subset and $T>0$. Then the following hold:
\begin{itemize}
	\item[1.] For all closed sets $F\subset C([-\delay,T],\R^d)$,
		$$\limsup_{\ve\to0}\sup_{\phi\in\K}\ve\log\P\lb\X^{\ve,\phi}|_{[-\delay,T]}\in F\rb\leq-\lim_{\eta\to0}\inf_{\phi\in\K}I_T^\phi(F^\eta).$$
	\item[2.] For all open sets $G\subset C([-\delay,T],\R^d)$,
		$$\liminf_{\ve\to0}\inf_{\phi\in\K}\ve\log\P\lb X^{\ve,\phi}|_{[-\delay,T]}\in G\rb\geq-\lim_{\eta\to0}\sup_{\phi\in\K}I_T^\phi(G_\eta).$$
\end{itemize}
\end{theorem}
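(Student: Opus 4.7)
The plan is to deduce Theorem \ref{thm:uniformLDP} from an intermediate uniform Laplace principle over bounded sets, and to prove that Laplace principle directly from the Bou\'e--Dupuis variational representation for exponential functionals of Brownian motion. For any bounded continuous $h\colon C([-\delay,T],\R^d)\to\R$ and any $\phi\in\K$, combining Corollary \ref{cor:strong} with \cite[Theorem 3.1]{Boue1998} (which is the content of the claimed Lemma \ref{lem:varX}) yields
\[
-\ve\log E\bigl[e^{-h(\X^{\ve,\phi}|_{[-\delay,T]})/\ve}\bigr]
=\inf_{u}E\!\left[\tfrac12\!\int_0^T\!|u(s)|^2ds+h\bigl(\wt\X^{\ve,\phi,u}|_{[-\delay,T]}\bigr)\right],
\]
where the infimum is over $\{\F_t\}$-adapted controls $u$ and $\wt\X^{\ve,\phi,u}$ solves the SDDE \eqref{eq:sdde} driven by the shifted Brownian motion $W(\cdot)+\int_0^\cdot u(s)ds$. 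One may restrict to controls with $\int_0^T|u|^2\leq N$ for $N=N(\norm{h}_\infty)$, so controls live in the weakly compact Polish space $L_N^2([0,T],\R^m)$ introduced in \eqref{eq:LN2}.

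The technical core is a deterministic-approximation estimate (the claimed Lemma \ref{lem:distribution}): for any sequences $\ve_n\downarrow0$, $\phi_n\in\K$, and $u_n\in L_N^2$, the controlled stochastic path $\wt\X^{\ve_n,\phi_n,u_n}$ and the controlled deterministic path $\x^{\phi_n,u_n}$ defined by \eqref{eq:xv} satisfy $d_T(\wt\X^{\ve_n,\phi_n,u_n},\x^{\phi_n,u_n})\to 0$ in probability. I would prove this by a pathwise Gronwall argument: the difference of the two integral equations contains a drift difference controlled by Assumption \ref{ass:lip}, a Girsanov-type integral $\int_0^t[\g(\wt\X_s)-\g(\x_s)]u_n(s)ds$ bounded via Cauchy--Schwarz and the $L^2$-bound $N$, and a vanishing noise term handled by the Burkholder--Davis--Gundy inequality. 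Because $\K$ is bounded, a priori moment bounds on $\wt\X^{\ve_n,\phi_n,u_n}$ follow from \eqref{eq:lipbound}, making the estimate uniform over $\phi\in\K$. With this lemma in hand, the Laplace \emph{lower} bound comes from taking an almost-minimizing random control in the variational representation and applying the lemma together with the definition \eqref{eq:rate} of $I_T^\phi$; the Laplace \emph{upper} bound comes from choosing, for each $\phi$, a near-optimal deterministic control from $U_T(x)$ for an $x$ nearly realizing $\inf_x\{I_T^\phi(x)+h(x)\}$, lifting it to the variational representation, and using the lemma to replace the stochastic path by the deterministic one.

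To convert the uniform Laplace principle into the LDP of Theorem \ref{thm:uniformLDP}, I would, for a closed set $F$, plug $h_M(x)=M(1\wedge d_T(x,F))$ into the Laplace principle and let $M\to\infty$, following the argument of \cite[Theorem 1.2.3]{Dupuis1997}. The enlargement $F^\eta$ appears naturally because near-minimizers of the Laplace functional at level $M$ lie in $F^{\eta(M)}$ with $\eta(M)\to0$, and \emph{without} a uniformly compact level-set property over $\phi\in\K$ one cannot push $\eta$ to $0$ before the supremum over $\phi$; the open-set bound follows symmetrically using $-M(1\wedge d_T(\cdot,G^c))$ and the shrinking $G_\eta$. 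The main obstacle throughout is the non-compactness of $\K$ in $\C$, which wrecks the tightness-plus-subsequence arguments used in the finite-dimensional and compact-set settings of \cite{Boue1998,Dupuis1997}. My plan sidesteps tightness altogether by exploiting the strong-solution framework of Corollary \ref{cor:strong}: all processes are realized on one filtered probability space with one Brownian motion $W$, so the approximation lemma is a \emph{pathwise} estimate whose dependence on $\phi$ is through $\norm{\phi}_{[-\delay,0]}$ alone, giving the uniformity over bounded $\K$ that drives every step of the argument.
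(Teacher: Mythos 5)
Your proposal is correct and follows essentially the same route as the paper: Bou\'e--Dupuis variational representation lifted to the SDDE via the strong-solution map of Corollary \ref{cor:strong}, a Gronwall-type uniform convergence estimate for controlled stochastic paths toward controlled deterministic paths (the paper's Lemma \ref{lem:distribution}), a uniform Laplace principle over bounded sets proved by the almost-minimizing-random-control and near-optimal-deterministic-control arguments you describe, and conversion to the LDP by approximating indicator functionals with Lipschitz functions, with the $F^\eta$/$G_\eta$ modifications arising precisely because level sets $\{I_T^\phi\le M\}$ are not uniformly compact over bounded $\K$. You also correctly identify that the single-probability-space, single-$W$ realization is what lets the key lemma be a pathwise Gronwall estimate whose dependence on $\phi$ is only through $\norm{\phi}_{[-\delay,0]}$, bypassing tightness.
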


The proof of Theorem \ref{thm:uniformLDP} is given in Section \ref{sec:LDPproof}.

\subsection{Exit time asymptotics}\label{sec:exit}

Let $\xsops$ be a periodic solution of \eqref{eq:dde} with period $\period>0$ and let $\orbit=\{\xsops_t,t\in[0,\period)\}$ denote its orbit in $\C$. Define the \emph{quasipotential} $V:\C\to[0,\infty]$ associated with $\orbit$ by
	\be\label{eq:quasipotential}V(\psi)=\inf\lcb I_T(\x):T>0,\x\in C([-\delay,T],\R^d),\x_0\in\orbit,\x_T=\psi\rcb,\qquad\psi\in\C.\ee
Let $\domain$ be a bounded domain in $\C$ that contains $\orbit$. Let $\overline{\domain}$ denote the closure of $\domain$ in $\C$, $\domain^c$ denote the complement of $\domain$  in $\C$ and
	\be\label{eq:domain0}\domain^s=\lcb\phi\in\domain:\x_t^\phi\in\domain\;\forall\;t\geq0\rcb,\ee
where we recall that $\x^\phi$ denotes the solution of the DDE with initial condition $\phi\in\C$. Define
	\be\label{eq:Vbar}\overline{V}=\inf\lcb V(\psi):\psi\not\in\overline{\domain}\rcb\ee
and
	\be\label{eq:Vlower}\underline{V}=\lim_{\eta\to0}V_\eta,\ee
where
	\be\label{eq:Veta}V_\eta=\inf\{V(\psi):\psi\in\ball(\domain^c,\eta)\}.\ee
For $\ve>0$ and $\phi\in\C$, define the $\{\F_t\}$-stopping time
	\be\label{eq:exitvedelta}\exit^{\ve,\phi}=\inf\lcb t\geq0:\X_t^{\ve,\phi}\not\in\domain\rcb.\ee

\begin{theorem}\label{thm:exit}
Suppose $m=d$ and Assumptions \ref{ass:lip} and \ref{ass:non-degenerate} hold. Let $\xsops$ be a periodic solution of \eqref{eq:dde} with stable orbit $\orbit$ and let $\domain$ be a bounded domain in $\C$ that contains $\orbit$. Define $\overline{V}$ and $\underline{V}$ as in \eqref{eq:Vbar} and \eqref{eq:Vlower}, respectively. Suppose there exists $\eta_0>0$ such that $\ball(\domain,\eta_0)$ is uniformly attracted to $\orbit$. Then for all $\phi\in\domain^s$,
	\be\label{eq:Eexit}\underline{V}\leq\liminf_{\ve\to0}\ve\log \E\lsb\exit^{\ve,\phi}\rsb\leq\limsup_{\ve\to0}\ve\log \E\lsb\exit^{\ve,\phi}\rsb\leq\overline{V},\ee
and for all $\alpha>0$,
	\be\label{eq:Pexit}\lim_{\ve\to0}\P\lb e^{(\underline{V}-\alpha)/\ve}<\exit^{\ve,\phi}<e^{(\overline{V}+\alpha)/\ve}\rb=1.\ee
\end{theorem}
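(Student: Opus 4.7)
The plan is to follow the structure of the classical Freidlin--Wentzell exit time argument (as in \cite[Theorem 5.7.11]{Dembo1998}), using Theorem \ref{thm:uniformLDP} in place of the usual uniform LDP over compact sets and the strong Markov property from Proposition \ref{prop:sddereu}. The four estimates in \eqref{eq:Eexit} and \eqref{eq:Pexit} split into an \emph{upper half} ($\limsup \ve \log E[\exit^{\ve,\phi}] \leq \overline V$ and the upper tail in \eqref{eq:Pexit}) proved via a cycle argument using the LDP \emph{lower} bound, and a \emph{lower half} ($\liminf \ve \log E[\exit^{\ve,\phi}] \geq \underline V$ and the lower tail in \eqref{eq:Pexit}) proved via a strong-Markov union bound using the LDP \emph{upper} bound. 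The main modifications relative to \cite[Theorem 5.7.11]{Dembo1998} are to accommodate the fact that Theorem \ref{thm:uniformLDP} is stated with the enlargements $F^\eta$ and shrinkings $G_\eta$, and that our starting sets are bounded but not compact in $\C$; Assumption \ref{ass:non-degenerate} is used to guarantee that any desired drift can be produced via an $L^2$ control in the definition \eqref{eq:rate} of $I_T$.

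For the upper half, fix $\alpha>0$ and pick $\eta_1 \in (0,\eta_0)$ small enough that stability of $\orbit$ (Definition \ref{def:stable}) gives $\ball(\orbit,\eta_1) \subset \domain$ with room to spare. Uniform attraction supplies a $T_1 > 0$ with $d_0(\orbit, x_{T_1}^\psi) \leq \eta_1/2$ for all $\psi \in \ball(\domain,\eta_0)$. From \eqref{eq:Vbar}, select $\psi^\ast \in \orbit$, $T_2 > 0$ and $y \in C([-\delay,T_2],\R^d)$ with $y_0 = \psi^\ast$, $y_{T_2} \notin \overline\domain$, and $I_{T_2}(y) \leq \overline V + \alpha/4$; by shifting $y$ along the periodic orbit and prepending a short deterministic segment, we build, for every $\psi \in \ball(\orbit,\eta_1)$, a path of some uniform length $T_3$ from $\psi$ to $\overline\domain^c$ with cost at most $\overline V + \alpha/2$. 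Applying the uniform LDP lower bound in Theorem \ref{thm:uniformLDP} to a small uniform-norm neighborhood of the family of such paths, over the bounded set $\ball(\orbit,\eta_1)$, yields a uniform lower bound $\inf_{\psi\in\ball(\orbit,\eta_1)} P(\exit^{\ve,\psi} \leq T_1 + T_3) \geq e^{-(\overline V + \alpha)/\ve}$ for small $\ve$. The standard cycle argument (alternating between the attraction phase and an exit attempt, controlled by the strong Markov property) then produces $E[\exit^{\ve,\phi}] \leq (T_1 + T_3) e^{(\overline V + \alpha)/\ve}$ and $P(\exit^{\ve,\phi} > e^{(\overline V + 2\alpha)/\ve}) \to 0$.

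For the lower half, fix $\alpha > 0$ and, using \eqref{eq:Vlower}, pick $\eta > 0$ with $V_\eta \geq \underline V - \alpha/4$. Fix $T_0 > 0$ and let $F$ denote the closed set of $x \in C([-\delay,T_0],\R^d)$ with $x_0 \in \overline\domain$ and $x_t \in \overline\domain^c$ for some $t \in [0,T_0]$. The crucial geometric observation is that every $x \in F^{\eta/2}$ has some $t \in [0,T_0]$ with $d_0(x_t,\domain^c) \leq \eta/2$, so that the terminal segment lies in $\ball(\domain^c, \eta)$; combined with stability of $\orbit$ and the free cost of following the deterministic flow back to $\orbit$, this gives $\inf_{\psi \in \overline\domain} I_{T_0}^\psi(F^{\eta/2}) \geq V_\eta - \alpha/4 \geq \underline V - \alpha/2$ for $T_0$ large enough. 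Applying the uniform LDP upper bound in Theorem \ref{thm:uniformLDP} to the bounded set $\K = \overline\domain$ produces $\sup_{\psi \in \overline\domain} P(\exit^{\ve,\psi} \leq T_0) \leq e^{-(\underline V - \alpha)/\ve}$ for small $\ve$, and a strong-Markov union bound over $\lceil T/T_0 \rceil$ sub-intervals with $T = e^{(\underline V - 2\alpha)/\ve}$ yields $P(\exit^{\ve,\phi} \leq T) \to 0$, delivering both the $\liminf$ bound on $\ve \log E[\exit^{\ve,\phi}]$ (via Markov's inequality in reverse) and the lower tail in \eqref{eq:Pexit}.

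The main obstacle is the geometric step in the lower half: establishing rigorously that any near-exiting path, when minimized over controls with initial segment in $\overline\domain$, incurs cost at least $V_\eta - o(1)$. This requires tying the $\eta$-enlargement appearing in Theorem \ref{thm:uniformLDP} to the $\eta$-widening used to define $V_\eta$ in \eqref{eq:Veta}, and exploiting stability of $\orbit$ to decompose any candidate path into a zero-cost approach-to-$\orbit$ phase and a subsequent costly exit phase whose cost is controlled by the quasipotential. A secondary subtlety is the need to absorb an initial free-flow segment from an arbitrary $\phi \in \domain^s$ into the cycle scheme without disturbing uniformity, which is handled using the bounded-set uniformity of Theorem \ref{thm:uniformLDP} together with the Lipschitz bound \eqref{eq:lipbound} to keep all starting segments in a bounded subset of $\C$; beyond this, the remaining steps parallel \cite[Theorem 5.7.11]{Dembo1998} closely enough that no additional structural modification is required.
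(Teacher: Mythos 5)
Your upper half is structurally the same as the paper's (Lemmas \ref{lem:Nmu}, \ref{lem:NmuNh}, \ref{lem:exitupper}): build uniformly cheap exit paths from a small neighborhood of $\orbit$, widen them to an open set $G$, apply the uniform LDP lower bound over the bounded starting set, and run a cycle argument via the strong Markov property combined with uniform attraction. That part is sound.

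The lower half, however, contains a genuine gap that the rest of the sketch does not repair. You propose to apply the uniform LDP upper bound over the bounded set $\K=\overline\domain$ to the event $F^{\eta/2}$ of near-exit within time $T_0$, and you assert $\inf_{\psi\in\overline\domain}I_{T_0}^\psi(F^{\eta/2})\geq V_\eta-\alpha/4$. This infimum is in fact $0$ (or arbitrarily small): for $\psi$ on or near $\partial\domain$ the zero-control path $\x^\psi$ (or a tiny perturbation of it) already lies in $F^{\eta/2}$, with rate $I_{T_0}^\psi(\x^\psi)=0$ by Remark \ref{rmk:ratexphi}. Equivalently, the naive strong-Markov union bound over windows of length $T_0$ needs $\sup_{\psi\in\domain}\P(\exit^{\ve,\psi}\leq T_0)$ to be exponentially small, but this supremum approaches $1$ as $\psi\to\partial\domain$, so the scheme collapses. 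The ``zero-cost approach-to-$\orbit$ phase followed by a costly exit'' decomposition you invoke is not available: a path starting near $\partial\domain$ need not pass anywhere near $\orbit$ before exiting, and the quasipotential in \eqref{eq:quasipotential} is defined for paths \emph{originating on} $\orbit$, not passing near it.

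The paper avoids this by working with a nested cycle structure keyed to the orbit, not to time windows. It introduces stopping times $\xi_1<\upsilon_1<\xi_2<\cdots$ that alternate between hitting $\sphere(\orbit,\mu)\cup\domain^c$ and hitting $\sphere(\orbit,2\mu)$ (see \eqref{eq:xi1}--\eqref{eq:xin}). The uniform LDP \emph{upper} bound is then applied only over the small starting set $\sphere(\orbit,2\mu)$, where the cost-to-exit infimum is genuinely controlled by $V_{2\eta}$; this is the content of Lemma \ref{lem:IphiVetaalpha}, which carefully relates the $\eta$-enlargement of Theorem \ref{thm:uniformLDP} to the $V_\eta$ appearing in \eqref{eq:Vlower}, using the rate function evaluation of Lemma \ref{lem:rateAC} and Assumption \ref{ass:non-degenerate}. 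Two additional ingredients you omit are then needed: Lemma \ref{lem:deltaDexitmu} (the first hit from $\phi\in\domain^s$ lands on $\sphere(\orbit,\mu)$ rather than $\domain^c$ with high probability) and Lemma \ref{lem:T0} (a super-exponential lower bound on the time for each cycle $\upsilon_n-\xi_n$, so that the count of cycles in $[0,e^{(\underline V-\alpha)/\ve}]$ is controlled). Without replacing the naive $T_0$-window union bound by this orbit-anchored cycle scheme, the lower bound in \eqref{eq:Eexit}--\eqref{eq:Pexit} cannot be obtained from the uniform LDP.
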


\begin{proof}
The theorem follows immediately from Lemmas \ref{lem:exitupper} and \ref{lem:exitlower}.
\end{proof}

\begin{remark}\label{rmk:Vfinite}
Due to the respective definitions of $\overline{V}$ and $\underline{V}$ in \eqref{eq:Vbar} and \eqref{eq:Vlower}, the definition of $V(\cdot)$ in \eqref{eq:quasipotential}, the characterization of the rate function in Lemma \ref{lem:rateAC} and the fact that $\domain$ is bounded, it readily deduced that $\overline{V}$ and $\underline{V}$ are finite.
\end{remark}

\begin{remark}\label{rmk:locallip}
Assumptions \ref{ass:lip} and \ref{ass:non-degenerate} impose a uniform Lipschitz continuity condition on the coefficients and a uniform nondegeneracy condition on the diffusion coefficient $a=\g\g'$ (on all of $\C$). However, since Theorem \ref{thm:exit} is only concerned with the process $\X^{\ve,\phi}$ up until its first exit time from the bounded domain $\domain$, the result readily extends to the case that the coefficients are uniformly Lipschitz continuous on $\overline{\domain}$ and the diffusion coefficient $a$ is uniformly nondegenerate on $\overline{\domain}$.
\end{remark}

\begin{remark}\label{rmk:VupperVlower}
One would like to show that $\underline{V}=\overline{V}$. For a general orbit $\orbit$ in $\C$ and bounded domain $\domain$ in $\C$ that contains $\orbit$, it is not clear if this equality holds. This is due to the degeneracy that arises because the state space $\C$ is infinite-dimensional while the driving Brownian motion is finite-dimensional. In particular, given an element $\phi$ on the boundary of a ``regular'' domain $\domain$ in $\C$ (i.e., $\domain$ is equal to the interior of its closure), it is possible that the solution of the SDDE with initial condition $\phi$ will almost surely remain in the domain for a positive amount of time, which is in contrast to the finite-dimensional stochastic differential equations setting. For example, suppose $d=1$, $\domain=\{\psi\in\C:\sup_{s\in[-\delay,0]}|\psi(s)|<1\}$ is the unit ball about the zero function in $\C$ and $\phi(t)=t/\delay$ for all $t\in[-\delay,0]$. Then $\phi$ lies in the boundary of $\domain$ and it is readily seen (due to the continuity of sample paths) that, for any $\ve>0$, the solution $\X^{\ve,\phi}$ of the SDDE almost surely remains in $\domain$ for a positive amount of time .
\end{remark}

\begin{lemma}\label{lem:Vupperlower}
Suppose $m=d$, Assumptions \ref{ass:lip} and \ref{ass:non-degenerate} hold, and $\nu^\ast\in\R^d$ is an equilibrium point of \eqref{eq:dde}. Define $\phi^\ast\in\C$ by $\phi^\ast(\cdot)\equiv\nu^\ast$. Let $\delta>0$, $\domain=\ball(\phi^\ast,\delta)$ and define $\overline{V}$ and $\underline{V}$ as in \eqref{eq:Vbar} and \eqref{eq:Vlower}, respectively. Then $\underline{V}=\overline{V}$.
\end{lemma}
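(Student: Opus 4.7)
The plan is to prove $\underline{V}=\overline{V}$ by establishing the two opposite inequalities.

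The inequality $\underline{V}\leq\overline{V}$ is immediate: one verifies that $\ball(\domain^c,\eta)=\{\psi\in\C:d_0(\psi,\phi^\ast)>\delta-\eta\}$ contains $\overline{\domain}^c=\{\psi\in\C:d_0(\psi,\phi^\ast)>\delta\}$ for every $\eta>0$, so $V_\eta\leq\overline{V}$, and sending $\eta\to 0^+$ yields $\underline{V}\leq\overline{V}$.

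For the reverse inequality $\overline{V}\leq\underline{V}$, my approach is a radial-scaling argument that exploits the fact that $\phi^\ast\equiv\nu^\ast$ is a \emph{constant} function. Given $\epsilon>0$, for small $\eta>0$ I would choose $\psi^\eta\in\ball(\domain^c,\eta)$ with $V(\psi^\eta)\leq V_\eta+\epsilon/3$ (finite by Remark \ref{rmk:Vfinite}), and by Lemma \ref{lem:rateAC} an absolutely continuous path $x\in C([-\delay,T],\R^d)$ with $x_0=\phi^\ast$, $x_T=\psi^\eta$, and $I_T(x)\leq V(\psi^\eta)+\epsilon/3$. For $\lambda\geq 1$, define
\[
\tilde{x}^\lambda(t)=\nu^\ast+\lambda(x(t)-\nu^\ast),\qquad \tilde{\psi}^\lambda=\nu^\ast+\lambda(\psi^\eta-\nu^\ast).
\]
Because $x(s)\equiv\nu^\ast$ on $[-\delay,0]$, we have $\tilde{x}^\lambda_0=\phi^\ast$ automatically for every $\lambda\geq 1$---this is the essential use of the equilibrium hypothesis---and $d_0(\tilde{\psi}^\lambda,\phi^\ast)=\lambda\,d_0(\psi^\eta,\phi^\ast)$, so $\tilde{\psi}^\lambda\notin\overline{\domain}$ whenever $\lambda>\delta/d_0(\psi^\eta,\phi^\ast)$; this threshold is bounded above by $\delta/(\delta-\eta)\to 1$ as $\eta\to 0^+$.

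The main technical step is to show, by dominated convergence, that $I_T(\tilde{x}^\lambda)\to I_T(x)$ as $\lambda\to 1^+$ with $(x,T)$ fixed. Using Lemma \ref{lem:rateAC}, $I_T(\tilde{x}^\lambda)=\int_0^T\Lambda(\tilde{x}^\lambda_s,\lambda\dot{x}(s))\,ds$. The integrand converges pointwise to $\Lambda(x_s,\dot{x}(s))$ by the uniform convergence $\tilde{x}^\lambda_s\to x_s$ in $\C$, the convergence $\lambda\dot{x}(s)\to\dot{x}(s)$ for a.e.\ $s$, and the continuity of $\Lambda$ (from continuity of $b$ and of $a^{-1}$ guaranteed by Remark \ref{rmk:non-degenerate}); a uniform-in-$\lambda\in[1,2]$ integrable dominator $C(1+\norm{x}_{[-\delay,T]}^2+|\dot{x}(s)|^2)$ arises from the estimate $\Lambda(\phi,\nu)\leq c^{-1}(|b(\phi)|^2+|\nu|^2)$ together with the linear-growth bound \eqref{eq:lipbound}, with integrability following because $x$ is continuous on $[-\delay,T]$ and $I_T(x)<\infty$ forces $\dot{x}\in L^2([0,T],\R^d)$. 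Thus for the fixed $(x,T)$ there is a threshold $\lambda_0>1$ such that $I_T(\tilde{x}^\lambda)\leq I_T(x)+\epsilon/3$ for all $\lambda\in(1,\lambda_0]$. Choosing $\eta$ small enough that the admissible interval $(\delta/d_0(\psi^\eta,\phi^\ast),\lambda_0]$ is nonempty and picking $\lambda^\eta$ in it yields
\[
\overline{V}\leq V(\tilde{\psi}^{\lambda^\eta})\leq I_T(\tilde{x}^{\lambda^\eta})\leq I_T(x)+\epsilon/3\leq V_\eta+\epsilon\leq\underline{V}+\epsilon,
\]
and letting $\epsilon\to 0^+$ gives $\overline{V}\leq\underline{V}$.

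The main obstacle I anticipate is the coordination between $\eta$, the near-minimizing path $(x,T)$, and the scaling factor $\lambda^\eta$: the dominated-convergence threshold $\lambda_0$ depends on $(x,T)$, which itself depends on $\eta$, while the compatibility condition $\delta/(\delta-\eta)<\lambda_0$ further constrains $\eta$. Since $\lambda_0>1$ strictly for any fixed finite-cost path and $\delta/(\delta-\eta)\to 1$ as $\eta\to 0^+$, the compatibility can be arranged by adaptive selection---fixing $(x,T)$ first and then verifying that $\eta$ can be chosen (or further reduced, using that $\psi^\eta$ remains a feasible competitor for $V_{\eta'}$ at any $\eta'>\delta-d_0(\psi^\eta,\phi^\ast)$) so that $\delta/(\delta-\eta)\leq\lambda_0$. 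The radial-scaling construction is specific to the equilibrium case; for a general periodic orbit, a global radial scaling would not preserve the orbit constraint, which is why the lemma is stated only for equilibria.
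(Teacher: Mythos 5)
Your forward inequality $\underline{V}\leq\overline{V}$ is fine (indeed it holds for any bounded domain, since $\overline{\domain}^c\subset\domain^c\subset\ball(\domain^c,\eta)$), and your core structural idea for the reverse inequality --- that the constant-equilibrium hypothesis lets you push a near-optimal path slightly past $\partial\domain$ by a radial construction about $\nu^\ast$ --- is exactly the right one and is the same idea used by the paper. However, the dominated-convergence implementation has a genuine gap that your "adaptive selection" remark does not actually close.

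The problem is the order of quantifiers. For each $\eta>0$ you choose $\psi^\eta$ and a near-optimal $(x,T)$; DCT then yields a threshold $\lambda_0=\lambda_0(x,T)>1$ with $I_T(\tilde{x}^\lambda)\leq I_T(x)+\epsilon/3$ for $\lambda\in(1,\lambda_0]$, while the exit constraint requires $\lambda>\delta/d_0(\psi^\eta,\phi^\ast)$. When $d_0(\psi^\eta,\phi^\ast)<\delta$ (the only interesting case --- otherwise $\psi^\eta\notin\overline{\domain}$ already), the lower threshold $\delta/d_0(\psi^\eta,\phi^\ast)>1$ is fixed once $\psi^\eta$ is fixed; reducing $\eta$ changes $\psi^\eta$, hence $(x,T)$, hence $\lambda_0$, so the loop never closes. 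DCT gives you \emph{some} $\lambda_0>1$ but zero quantitative control on how close to $1$ it is, and there is no a priori lower bound on $\lambda_0-1$ uniform over near-optimal paths. Note also that your global scaling $\tilde{x}^\lambda=\nu^\ast+\lambda(x-\nu^\ast)$ inflates the entire trajectory, including any long excursion the near-optimal path might make far from $\phi^\ast$; the perturbation in the integrand is of order $(\lambda-1)\norm{x_s-\phi^\ast}$, which need not be small on those excursions, so a quantitative version of your estimate would still depend on $(x,T)$ and not just on $\eta,\delta$.

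The paper avoids this by performing the radial construction \emph{locally}, not globally. Starting from the near-optimal path $x^\dagger$ with $x^\dagger_0=\phi^\ast$ and $x^\dagger_{T^\dagger}=\psi$, let $S$ be the first time $|x^\dagger(t)-\nu^\ast|$ (Euclidean, not sup over the segment) reaches $\delta-\eta$; discard the rest of $x^\dagger$, noting $I_S(x^\dagger|_{[-\delay,S]})\leq I_{T^\dagger}(x^\dagger)$ by Lemma \ref{lem:concat}, and then append a short radial segment $x(t)=x^\dagger(S)+(t-S)(x^\dagger(S)-\nu^\ast)$ on $(S,T]$ with $T-S=2\eta/(\delta-\eta)$, which carries the path to $|x(T)-\nu^\ast|=\delta+\eta$, i.e., outside $\overline{\domain}$. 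The cost of the appended segment is then bounded, via Lemma \ref{lem:rateAC}, $b(\phi^\ast)=0$, Lipschitz continuity of $b$, and the bound \eqref{eq:Mabound}, by an explicit multiple of $(T-S)=O(\eta)$ that depends only on $\eta,\delta,M_a,\lip_1$ and not on the chosen near-optimal path; choosing $\eta$ small at the outset makes this $<\alpha$. That uniformity in $\eta$ is precisely what your DCT step cannot deliver, so to repair your argument you should replace the global scaling by this local extension (or by a radial scaling applied only to the final short piece of the path after truncation at $S$), which makes the estimate quantitative and breaks the circular dependence.
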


The proof of Lemma \ref{lem:Vupperlower} is given in Section \ref{sec:Vupperlower}.

\section{Properties of the rate function}\label{sec:rate}

\subsection{Basic properties}

Given $0\leq S<T<\infty$ and $\x\in C([-\delay,T],\R^d)$, define $\x^S\in C([-\delay,T-S],\R^d)$ by
	\be\label{eq:xS}\x^S(t)=\x(S+t)\quad\text{for } t\in[-\delay,T-S].\ee

\begin{lemma}\label{lem:concat}
Let $0\leq S<T<\infty$ and $\x\in C([-\delay,T],\R^d)$. Then
	\be\label{eq:ITeqISITS} I_T(\x)=I_S(\x|_{[-\delay,S]})+I_{T-S}(\x^S).\ee
\end{lemma}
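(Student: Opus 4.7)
The plan is to prove the identity by establishing both inequalities via a clean one-to-one correspondence between the set $U_T(x)$ and pairs of controls in $U_S(x|_{[-\tau,S]}) \times U_{T-S}(x^S)$.

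First I would unpack the key observation: if $r \in [0,T-S]$ and we write $x^S_r \in \C$ for the history of $x^S$ at time $r$, then by the definition \eqref{eq:xS},
\[
x^S_r(u) = x^S(r+u) = x(S+r+u) = x_{S+r}(u), \qquad u \in [-\delay,0],
\]
so $x^S_r = x_{S+r}$ as elements of $\C$. This is the only delay-specific fact needed; everything else is ordinary splitting of controls.

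Next I would show that splitting and concatenation give a bijection $U_T(x) \leftrightarrow U_S(x|_{[-\delay,S]}) \times U_{T-S}(x^S)$ that preserves the $L^2$-norm-squared additively. For the forward direction, given $u \in U_T(x)$, let $u_1 = u|_{[0,S]}$ and $u_2(r) = u(S+r)$ for $r \in [0,T-S]$. Using \eqref{eq:xv} at time $t = S$ shows $u_1 \in U_S(x|_{[-\delay,S]})$, and subtracting the $t=S$ identity from the identity at $t = S+r$ (together with the shift identity above) shows $u_2 \in U_{T-S}(x^S)$. Conversely, given $u_1, u_2$ in the respective control sets, define the concatenation
\[
u(s) = u_1(s)\,\mathbf{1}_{[0,S]}(s) + u_2(s-S)\,\mathbf{1}_{(S,T]}(s),
\]
and verify \eqref{eq:xv} on $[0,T]$ by splitting the integrals at $S$ and using both defining relations. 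In either direction,
\[
\tfrac{1}{2}\int_0^T |u(s)|^2 \, ds = \tfrac{1}{2}\int_0^S |u_1(s)|^2\,ds + \tfrac{1}{2}\int_0^{T-S}|u_2(r)|^2\,dr.
\]

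From this bijection the identity \eqref{eq:ITeqISITS} follows by taking infima: given $\ve > 0$, pick a near-optimal $u \in U_T(x)$, split it, and bound $I_S(x|_{[-\delay,S]}) + I_{T-S}(x^S)$ from above by $I_T(x) + \ve$; conversely pick near-optimal $u_1, u_2$, concatenate, and bound $I_T(x)$ from above by $I_S(x|_{[-\delay,S]}) + I_{T-S}(x^S) + \ve$. Finally I would dispose of the degenerate cases by noting that the bijection shows $U_T(x) = \emptyset$ if and only if at least one of $U_S(x|_{[-\delay,S]})$ or $U_{T-S}(x^S)$ is empty; in that case both sides of \eqref{eq:ITeqISITS} equal $+\infty$ under the usual convention.

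There is no real obstacle here beyond careful bookkeeping; the only point that warrants attention is the identification $x^S_r = x_{S+r}$ in $\C$, which must be invoked when rewriting the integrals in the defining equation \eqref{eq:xv} after a time shift.
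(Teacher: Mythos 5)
Your proposal is correct and follows the same essential route as the paper: split a near-optimal control in $U_T(x)$ at time $S$ into a pair in $U_S(x|_{[-\delay,S]}) \times U_{T-S}(x^S)$ for one inequality, concatenate near-optimal controls from those two sets for the other, and use the additive $L^2$ decomposition together with the shift identity $x^S_r = x_{S+r}$. The only cosmetic difference is that you package the two directions as a single norm-preserving bijection and explicitly record the empty-control-set case; the substance is identical.
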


\begin{proof}
We first prove the inequality
	\be\label{eq:ITgeqISITS} I_T(\x)\geq I_S(\x|_{[-\delay,S]})+I_{T-S}(\x^S).\ee
Let $\alpha>0$. By the definition of the rate function in \eqref{eq:rate1}, there exists $u\in U_T(\x)$ such that
	\be\label{eq:vITdelta}\frac{1}{2}\int_0^T|u(s)|^2ds\leq I_T(\x)+\alpha.\ee
Since $u\in U_T(\x)$ and $S<T$, it follows that \eqref{eq:xv} holds with $S$ in place of $T$. Thus, $u|_{[0,S]}\in U_S(\x|_{[-\delay,S]})$. Therefore, by \eqref{eq:rate1},
	\be\label{eq:ISv0S}I_S(\x|_{[-\delay,S]})\leq\frac{1}{2}\int_0^S|u(s)|^2ds.\ee
Define $u^S\in L^2([0,T-S],\R^m)$ by
	\be\label{eq:vS}u^S(t)=u(S+t)\quad\text{for all }t\in[0,T-S].\ee 
By \eqref{eq:xS}, \eqref{eq:xv} and \eqref{eq:vS}, for all $t\in[0,T-S]$,
\begin{align*}
	\x^S(t)=\x(S+t)&=\x(0)+\int_0^{S+t}\f(\x_s)ds+\int_0^{S+t}\g(\x_s)u(s)ds\\
	&=\x^S(0)+\int_0^t\f(\x_s^S)ds+\int_0^t\g(\x_s^S)u^S(s)ds.
\end{align*}
Therefore, $u^S\in U_{T-S}(\x^S)$ and by \eqref{eq:rate1} and \eqref{eq:vS},
	\be\label{eq:ITSvS}I_{T-S}(\x^S)\leq\frac{1}{2}\int_0^{T-S}|u^S(s)|^2ds=\frac{1}{2}\int_S^T|u(s)|^2ds.\ee
Combining \eqref{eq:ISv0S}, \eqref{eq:ITSvS} and \eqref{eq:vITdelta}, we see that
	$$I_S(\x|_{[-\delay,S]})+I_{T-S}(\x^S)\leq\frac{1}{2}\int_0^T|u(s)|^2ds\leq I_T(\x)+\alpha.$$
Since $\alpha>0$ was arbitrary, this proves \eqref{eq:ITgeqISITS}.

Next, we prove the reverse inequality
	\be\label{eq:ITleqISITS} I_T(\x)\leq I_S(\x|_{[-\delay,S]})+I_{T-S}(\x^S).\ee
Again, let $\alpha>0$. By \eqref{eq:rate1}, there exist $u^\dagger\in U_S(\x|_{[-\delay,S]})$ and $u^\ddagger\in U_{T-S}(\x^S)$ such that
	\be\label{eq:vdaggervddagger}\frac{1}{2}\int_0^S|u^\dagger(s)|^2ds\leq I_S(\x|_{[-\delay,S]})+\frac{\alpha}{2}\quad\text{and}\quad\frac{1}{2}\int_0^{T-S}|u^\ddagger(s)|^2ds\leq I_{T-S}(\x^S)+\frac{\alpha}{2}.\ee
According to the definition of $U_S(\x|_{[-\delay,S]})$ and $U_{T-S}(\x^S)$,
	\be\label{eq:x0S}\x(t)=\x(0)+\int_0^t\f(\x_s)ds+\int_0^t\g(\x_s)u^\dagger(s)ds\quad\text{for all }t\in[0,S],\ee
and
	\be\label{eq:xSddagger}\x^S(t)=\x^S(0)+\int_0^t\f(\x_s^S)ds+\int_0^t\g(\x_s^S)u^\ddagger(s)ds\quad\text{for all }t\in[0,T-S].\ee
Define $u\in L^2([0,T],\R^m)$ by 
	\be\label{eq:udaggerddagger} u(t)=
	\begin{cases}
		u^\dagger(t)&\text{for all }t\in[0,S],\\
		u^\ddagger(t-S)&\text{for all }t\in(S,T].
	\end{cases}\ee
It follows from \eqref{eq:x0S}, \eqref{eq:xSddagger}, \eqref{eq:xS} and \eqref{eq:udaggerddagger}, that \eqref{eq:xv} holds and so $u\in U_T(\x)$. Therefore, by \eqref{eq:rate1}, \eqref{eq:udaggerddagger} and \eqref{eq:vdaggervddagger},
\begin{align*}
	I_T(\x)&\leq\frac{1}{2}\int_0^T|u(s)|^2ds\leq I_S(\x|_{[-\delay,S]})+I_{T-S}(\x^S)+\alpha.
\end{align*}
Since $\alpha>0$ was arbitrary, this proves \eqref{eq:ITleqISITS}.
\end{proof}

\begin{lemma}\label{lem:xxphi}
Let $\phi\in\C$ and $T>0$. Suppose $\x\in C([-\delay,T],\R^d)$ satisfies $I_T^\phi(\x)<\infty$. Then for all $t\in[0,T]$,
	\be\label{eq:xxphibound}\norm{\x-\x^\phi}_{[-\delay,t]}^2\leq 4I_T^\phi(\x) \lip_2\lb1+\norm{\x}_{[-\delay,t]}^2\rb t\exp(2\lip_1t^2).\ee
\end{lemma}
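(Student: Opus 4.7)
The plan is to exploit the variational definition of $I_T^\phi(\x)$ to produce a near-optimal control $u \in U_T(\x)$, subtract the DDE satisfied by $\x^\phi$ from the controlled equation satisfied by $\x$, and then apply Cauchy--Schwarz together with Gr\"onwall's inequality.

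First, fix $\alpha > 0$. Since $I_T^\phi(\x) < \infty$, the set $U_T(\x)$ is nonempty and by \eqref{eq:rate} I can pick $u \in U_T(\x)$ with $\frac{1}{2}\int_0^T |u(s)|^2 ds \leq I_T^\phi(\x) + \alpha$. Because $\x_0 = \phi = \x_0^\phi$, the functions $\x$ and $\x^\phi$ agree on $[-\delay,0]$, while on $[0,t]$ with $t \leq T$ the identity \eqref{eq:xv} for $\x$ combined with \eqref{eq:dde} for $\x^\phi$ gives
\begin{equation*}
\x(r) - \x^\phi(r) = \int_0^r \lb \f(\x_s) - \f(\x_s^\phi) \rb ds + \int_0^r \g(\x_s) u(s)\, ds, \qquad r \in [0,t].
\end{equation*}

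Next, I would square and apply the elementary bound $|a+b|^2 \leq 2|a|^2 + 2|b|^2$, then apply Cauchy--Schwarz to each integral. The drift term is handled using Assumption \ref{ass:lip} and the observation that $\norm{\x_s - \x_s^\phi}_{[-\delay,0]} \leq \norm{\x - \x^\phi}_{[-\delay,s]}$ for $s \in [0,t]$, yielding
\begin{equation*}
2\left|\int_0^r (\f(\x_s) - \f(\x_s^\phi))\, ds\right|^2 \leq 2 t \lip_1 \int_0^r \norm{\x - \x^\phi}_{[-\delay,s]}^2\, ds.
\end{equation*}
For the second term, Cauchy--Schwarz together with the linear growth bound \eqref{eq:lipbound} and monotonicity $\norm{\x_s}_{[-\delay,0]} \leq \norm{\x}_{[-\delay,t]}$ give
\begin{equation*}
2 \left|\int_0^r \g(\x_s) u(s) ds\right|^2 \leq 2 \lip_2 \lb 1 + \norm{\x}_{[-\delay,t]}^2 \rb t \cdot 2(I_T^\phi(\x) + \alpha).
\end{equation*}
Since the right-hand side is nondecreasing in $r$, I can take the supremum over $r \in [0,t]$ (and trivially over $r \in [-\delay, 0]$, where the difference vanishes), obtaining
\begin{equation*}
\norm{\x - \x^\phi}_{[-\delay,r]}^2 \leq C_\alpha + 2 t \lip_1 \int_0^r \norm{\x - \x^\phi}_{[-\delay,s]}^2\, ds, \qquad r \in [0,t],
\end{equation*}
where $C_\alpha = 4 (I_T^\phi(\x) + \alpha) \lip_2 (1 + \norm{\x}_{[-\delay,t]}^2) t$.

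Finally, Gr\"onwall's inequality applied to $r \mapsto \norm{\x - \x^\phi}_{[-\delay,r]}^2$ yields $\norm{\x - \x^\phi}_{[-\delay,t]}^2 \leq C_\alpha \exp(2 \lip_1 t^2)$, and letting $\alpha \downarrow 0$ produces the claimed bound \eqref{eq:xxphibound}. No step is truly delicate here; the only point requiring care is keeping track that $\norm{\x_s - \x_s^\phi}_{[-\delay,0]}$ is controlled by $\norm{\x - \x^\phi}_{[-\delay,s]}$ (as opposed to $\norm{\x - \x^\phi}_{[s-\delay,s]}$), which is what allows the Gr\"onwall closure in the variable $r$.
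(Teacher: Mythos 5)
Your proof is correct and follows essentially the same route as the paper: choose a near-optimal control $u$, subtract \eqref{eq:dde} from \eqref{eq:xv}, apply Cauchy--Schwarz with Assumption \ref{ass:lip} and \eqref{eq:lipbound}, close with Gr\"onwall in the running supremum, and let $\alpha\downarrow 0$. The only cosmetic difference is that you keep the inner integration limit as $r$ throughout before invoking Gr\"onwall, which is a slightly cleaner bookkeeping of the same estimate.
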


\begin{proof}
It follows from the definition of the rate function in \eqref{eq:rate} that $\x_0=\phi$ and, given $\alpha>0$, we can choose $u\in U_T(\x)$ so that \eqref{eq:xv} holds and
	\be\label{eq:v2ITphialpha}\frac{1}{2}\int_0^T|u(s)|^2ds<I_T^\phi(\x)+\alpha.\ee
By \eqref{eq:xv}, the fact that $\x^\phi$ satisfies \eqref{eq:dde} with $\x^\phi$ in place of $\x$, the fact that $\x_0=\x_0^\phi=\phi$, two applications of the Cauchy-Schwarz inequality, \eqref{eq:v2ITphialpha}, the Lipschitz continuity of $\f$ (Assumption \ref{ass:lip}) and the bound \eqref{eq:lipbound}, we have
\begin{align*}
	\norm{\x-\x^\phi}_{[-\delay,t]}^2&\leq2\sup_{0\leq s\leq t}\left|\int_0^s(\f(\x_r)-\f(\x_r^\phi))dr\right|^2+2\sup_{0\leq s\leq t}\left|\int_0^s\g(\x_r)u(r)dr\right|^2\\
	&\leq 2t\int_0^t|\f(\x_s)-\f(\x_s^\phi)|^2ds+4\lb I_T^\phi(\x)+\alpha\rb\int_0^t|\g(\x_s)|^2ds\\
	&\leq 2t\lip_1\int_0^t\norm{\x-\x^\phi}_{[-\delay,s]}^2ds+4\lb I_T^\phi(\x)+\alpha\rb \lip_2\lb1+\norm{\x}_{[-\delay,t]}^2\rb t.
\end{align*}
By Gronwall's inequality,
	$$\norm{\x-\x^\phi}_{[-\delay,t]}^2\leq4\lb I_T^\phi(\x)+\alpha\rb \lip_2\lb1+\norm{\x}_{[-\delay,t]}^2\rb t\exp(2\lip_1t^2).$$
Since $\alpha>0$ was arbitrary, this implies \eqref{eq:xxphibound}.
\end{proof}

\subsection{Compactness of level sets}\label{sec:levelsets}

\begin{lemma}\label{lem:levelsets}
Suppose Assumption \ref{ass:lip} holds. Let $T>0$ and $\K$ be a compact subset of $\C$. Then for each $M>0$, the level set
		\be\label{eq:CdelayTM}K_M=\lcb \x\in C([-\delay,T],\R^d):\x_0\in\K,I_T(\x)\leq M\rcb\ee
is a compact subset of $C([-\delay,T],\R^d)$.
\end{lemma}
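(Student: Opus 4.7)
The plan is to combine Arzelà--Ascoli with a lower semicontinuity argument based on weak compactness in $L^2$.

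First I would establish that $K_M$ is uniformly bounded and equicontinuous. For $\x \in K_M$ and $\alpha>0$, choose $u\in U_T(\x)$ with $\tfrac{1}{2}\int_0^T|u(s)|^2 ds \leq M+\alpha$. Applying Cauchy--Schwarz to each integral in \eqref{eq:xv} and using the growth estimate \eqref{eq:lipbound} yields a bound of the form
$$\norm{\x}_{[-\delay,t]}^2 \leq C_0 + C_1\int_0^t \norm{\x}_{[-\delay,s]}^2\,ds,\qquad t\in[0,T],$$
with $C_0,C_1$ depending only on $\sup_{\phi\in\K}\norm{\phi}_{[-\delay,0]}$ (finite by compactness of $\K$), $\lip_2$, $M+\alpha$, and $T$. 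Gronwall's inequality then produces a uniform bound $R:=\sup_{\x\in K_M}\norm{\x}_{[-\delay,T]}<\infty$. For equicontinuity on $[0,T]$, Cauchy--Schwarz in \eqref{eq:xv} gives
$$|\x(t)-\x(s)|^2 \leq 2(t-s)\int_s^t|\f(\x_r)|^2\,dr + 2\int_s^t|\g(\x_r)|^2\,dr\cdot\int_s^t|u(r)|^2\,dr,$$
and the uniform bound $|\f|^2+|\g|^2\leq \lip_2(1+R^2)$ together with $\int_0^T|u|^2\,ds\leq 2(M+\alpha)$ shows $|\x(t)-\x(s)|^2\leq C_2(t-s)$ uniformly over $\x\in K_M$. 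On $[-\delay,0]$, equicontinuity is inherited from the compactness of $\K\subset\C$, and the two pieces match at $t=0$. Arzelà--Ascoli then yields relative compactness of $K_M$ in $C([-\delay,T],\R^d)$.

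Next I would show $K_M$ is closed. Suppose $\x^{(n)}\in K_M$ with $\x^{(n)}\to\x$ uniformly on $[-\delay,T]$. Since $\K$ is closed in $\C$, $\x_0\in\K$. For each $n$, pick $u_n\in U_T(\x^{(n)})$ with $\tfrac{1}{2}\int_0^T|u_n(s)|^2\,ds\leq M+1/n$. The sequence $\{u_n\}$ is then bounded in $L^2([0,T],\R^m)$, so along a subsequence it converges weakly to some $u$ satisfying $\tfrac{1}{2}\int_0^T|u(s)|^2\,ds\leq M$ by weak lower semicontinuity of the $L^2$-norm. It remains to verify $u\in U_T(\x)$. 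Passing to the limit in the first two terms of
$$\x^{(n)}(t)=\x^{(n)}(0)+\int_0^t\f(\x^{(n)}_s)\,ds+\int_0^t\g(\x^{(n)}_s)u_n(s)\,ds$$
is routine using uniform convergence and the continuity of $\f$ supplied by Assumption \ref{ass:lip}. For the last term, the decomposition
$$\int_0^t\g(\x^{(n)}_s)u_n(s)\,ds = \int_0^t[\g(\x^{(n)}_s)-\g(\x_s)]u_n(s)\,ds + \int_0^t\g(\x_s)u_n(s)\,ds$$
reduces matters to two limits: the first is bounded by $\sqrt{2M+2/n}\cdot\bigl(\int_0^t|\g(\x^{(n)}_s)-\g(\x_s)|^2\,ds\bigr)^{1/2}$, which vanishes via Assumption \ref{ass:lip} and uniform convergence $\x^{(n)}\to\x$; the second converges to $\int_0^t\g(\x_s)u(s)\,ds$ because $\mathbf{1}_{[0,t]}\g(\x_\cdot)$ lies in $L^2([0,T],\mathbb{M}^{d\times m})$ and $u_n\rightharpoonup u$ weakly. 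Hence $u\in U_T(\x)$, $I_T(\x)\leq M$, and $\x\in K_M$.

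The main obstacle is the limit passage in the $\g$-integral: it requires simultaneously exploiting the strong (uniform) convergence $\x^{(n)}\to\x$ to control the $\g$ factor via its continuity, together with the weak $L^2$ convergence $u_n\rightharpoonup u$ to control the driving integrand. The split above is the cleanest way I see to keep these two modes of convergence from interfering with each other, and the uniform $L^2$-bound on $u_n$ is exactly what is needed to convert the strong convergence of $\g(\x^{(n)}_\cdot)$ into the vanishing of the error term.
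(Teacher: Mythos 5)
Your proof is correct and follows essentially the same route as the paper: uniform boundedness and $1/2$-Hölder equicontinuity via Cauchy--Schwarz, Gronwall, and \eqref{eq:lipbound}, then Arzelà--Ascoli for relative compactness, then closedness via weak sequential compactness of bounded sets in $L^2$. The one place you add detail is the explicit decomposition of the $\sigma$-integral into a vanishing error term (controlled by the uniform $L^2$-bound on $u_n$ and Lipschitz continuity of $\sigma$) plus a weakly-converging term, which the paper asserts without elaboration; this is a useful clarification but not a different method.
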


\begin{proof}
Let $M_1>0$ be such that 
	\be\label{eq:KM1}\norm{\phi}_{[-\delay,0]}^2\leq M_1,\qquad\phi\in\K.\ee 
Fix $M>0$. Let $\alpha\in(0,1)$ be arbitrary and set $N=2M+\alpha$. Let $\x\in K_M$. By \eqref{eq:CdelayTM}, the definition of the rate function in \eqref{eq:rate1} and the definition of $U_T(\x)$ there exists $u\in L_N^2([0,T],\R^m)$ such that \eqref{eq:xv} holds. By \eqref{eq:xv}, \eqref{eq:KM1}, two applications of the Cauchy-Schwarz inequality, the definition of $L_N^2([0,T],\R^m)$ in \eqref{eq:LN2} and \eqref{eq:lipbound},
\begin{align*}
	\norm{\x}_{[-\delay,t]}^2&\leq3|\x(0)|^2+3\sup_{0\leq s\leq t}\left|\int_0^s\f(\x_r)dr\right|^2+3\sup_{0\leq s\leq t}\left|\int_0^s\g(\x_r)u(r)dr\right|^2\\
	&\leq3\norm{\x_0}_{[-\delay,0]}^2+3t\int_0^t|\f(\x_s)|^2ds+3N\int_0^t|\g(\x_s)|^2ds\\
	&\leq3M_1+3(t+N)\lip_2\int_0^t\lb 1+\norm{\x}_{[-\delay,s]}^2\rb ds.
\end{align*}
An application of Gronwall's inequality yields 
	\be\label{eq:1normx}1+\norm{\x}_{[-\delay,T]}^2\leq M_2,\ee
where $M_2=(1+3M_1)\exp\lb 3T\lip_2(T+2M+1)\rb$. Again using \eqref{eq:xv}, two applications of the Cauchy-Schwartz inequality, \eqref{eq:LN2}, \eqref{eq:lipbound} and \eqref{eq:1normx}, we have, for $0\leq s<t\leq T$,
\begin{align*}
	|\x(t)-\x(s)|^2&=2\left|\int_s^t\f(\x_r)dr\right|^2+2\left|\int_s^t\g(\x_r)u(r)dr\right|^2\\
	&\leq2(|t-s|+N)\lip_2\int_s^t\lb1+\norm{\x}_{[-\delay,r]}^2\rb dr\\
	&\leq M_3|t-s|,
\end{align*}
where $M_3=2(T+2M+1)\lip_2M_2$. Thus, on the interval $[0,T]$, $\x$ is uniformly bounded and uniformly H\"older continuous with exponent 1/2. Since the constants $M_2$ and $M_3$ depend only on $\K$, $\f$, $\g$, $M$ and $T$, it follows from the Arzela-Ascoli theorem that $\{x|_{[0,T]}:x\in K_M\}$ is relatively compact in $C([0,T],\R^d)$. Then, because $\K$ is compact in $\C$, we have $K_M$ is relatively compact in $C([-\delay,T],\R^d)$.
	
We complete the proof by showing that $K_M$ is closed. Let $\{\x^n\}_{n=1}^\infty$ be a sequence in the set \eqref{eq:CdelayTM} and $\x\in C([-\delay,T],\R^d)$ be such that $\x^n\to\x$ in $C([-\delay,T],\R^d)$ as $n\to\infty$. By \eqref{eq:rate1}, for each $n\geq1$, we can choose $u^n\in L_N^2([0,T],\R^m)$ such that
	\be\label{eq:xnvn}\x^n(t)=\x^n(0)+\int_0^t\f(\x_s^n)ds+\int_0^t\g(\x_s^n)u^n(s)ds,\qquad t\in[0,T].\ee
Since $L_N^2([0,T],\R^m)$ is a compact Polish space when equipped with the weak topology, there exists a subsequence $\{n_k\}_{k=1}^\infty$, and an element $u$ of $L_N^2([0,T],\R^m)$ such that $u^{n_k}\to u$ in the weak topology as $k\to\infty$. Letting $k\to\infty$ in \eqref{eq:xnvn} (with $n_k$ in place of $n$), we see that \eqref{eq:xv} holds. Thus, $u\in U_T(\x)$. By \eqref{eq:rate1} and the facts that $u\in L_N^2([0,T],\R^m)$ and $N=2M+\alpha$, we have
	$$I_T^\phi(\x)\leq\frac{1}{2}\int_0^T|u(s)|^2ds\leq M+\frac{\alpha}{2}.$$
Since $\alpha>0$ was arbitrary, this proves $\x\in K_M$ and thus $K_M$ is closed.
\end{proof}

\subsection{Evaluation of the variational form of the rate function}\label{sec:rateAC}

\begin{proof}[Proof of Lemma \ref{lem:rateAC}]
Fix $T>0$ and $\x\in C([-\delay,T],\R^d)$. We first show that if $I_T(\x)<\infty$, then $I_T(\x)=J_T(\x)$. Suppose $I_T(\x)<\infty$ and let $u\in U_T(\x)$. It follows from \eqref{eq:xv} that $\x$ is absolutely continuous and at the almost every $t\in[0,T]$ that $\x$ is differentiable, the derivative of $\x$ satisfies
	$$\frac{d\x(t)}{dt}=\f(\x_t)+\g(\x_t)u(t).$$
Therefore, if $\dot{\x}\in L^1([0,T],\R^d)$ is such that \eqref{eq:xdotx} holds, it follows that $\dot{\x}(t)$ satisfies, for almost every $t\in[0,T]$,
	$$\dot{\x}(t)=\f(\x_t)+\sigma(\x_t)u(t).$$
By Remark \ref{rmk:non-degenerate}, $\sigma$ is invertible and $a^{-1}=\sigma^{-1}(\sigma^{-1})'$. Rearranging the last display, we see that, for almost every $t\in[0,T]$,
\begin{align}\label{eq:vdef}
	\frac{1}{2}|u(t)|^2&=\frac{1}{2}(\g(\x_t)u(t))'(a(\x_t))^{-1}(\g(\x_t)u(t))\\ \notag
	&=\frac{1}{2}(\dotx(t)-\f(\x_t))'(a(\x_t))^{-1}(\dotx(t)-\f(\x_t))\\ \notag
	&=\Lambda(\x_t,\dotx(t)).
\end{align}
Since this holds for every $u\in U_T(\x)$, by the definition of the rate function in \eqref{eq:rate}, we have $I_T(\x)=J_T(\x)$. We are left to show that $J_T(\x)<\infty$ implies $I_T(\x)<\infty$. Suppose $J_T(\x)<\infty$. Let $\dot\x\in L^1([0,T],\R^d)$ be such that \eqref{eq:xdotx} holds. Define $u\in L^2([0,T],\R^m)$ by $u(t)=\sigma^{-1}(\x_t)(\dot{\x}(t)-b(\x_t))$ for all $t\in[0,T]$, where the square integrability of $u$ follows from the fact that $J_T(\x)$ is finite. Rearranging and substituting into \eqref{eq:xdotx} we see that \eqref{eq:xv} holds. Thus, $u\in U_T(\x)$ and so $I_T(\x)<\infty$.
\end{proof}

\section{Uniform large deviation principle}\label{sec:proofLDP}

In this section we prove Theorem \ref{thm:uniformLDP}. Throughout this section we fix $T>0$. With some abuse of notation we write $W=\{W(t),t\in[0,T]\}$ to denote the restriction of the Brownian motion to the interval $[0,T]$ and, for $\ve>0$ and $\phi\in\C$, we write $\X^{\ve,\phi}=\{\X^{\ve,\phi}(t),t\in[-\delay,T]\}$ to denote the restriction of the solution of the SDDE to the interval $[-\delay,T]$. We let $\{\F_t^W,t\in[0,T]\}$ denote the $P$-augmented filtration generated by $W$, i.e., 
	$$\F_t^W=\sigma(\{W(s),0\leq s\leq t\},\mathcal{N}),\qquad t\in[0,T],$$
where $\mathcal{N}=\{A\in\F:\P(A)=0\}$ denotes the $\P$-null sets in $\F$. We say that an $m$-dimensional process $v=\{v(t),t\in[0,T]\}$ on $(\Omega,\F_T^W,\P)$ is progressively measurable with respect to the filtration $\{\F_t^W,t\in[0,T]\}$ if, for each $t\in[0,T]$, the function from $\Omega\times[0,t]$ to $\R^m$ defined by $(\omega,s)\to v(\omega,s)$ is $\F_t^W\otimes\B([0,t])$-measurable. We use $\L^2([0,T],\R^m)$ to denote the set of $m$-dimensional processes $v=\{v(t),t\in[0,T]\}$ on $(\Omega,\F_T^W,\P)$ that are progressively measurable with respect to $\{\F_t^W,t\in[0,T]\}$ and satisfy
	$$\E\left[\int_0^T|v(s)|^2ds\right]<\infty.$$
For $N>0$, we let $\L_N^2([0,T],\R^m)$ denote the subset of processes $v$ in $\L^2([0,T],\R^m)$ that a.s.\ take values in $L_N^2([0,T],\R^m)$.

\subsection{Variational representation}\label{sec:variational}

In this section we obtain a variational representation for exponential functionals of solutions of the SDDE, which follows from the work of Bou\'e and Dupuis \cite{Boue1998} and Corollary \ref{cor:strong}. The following is a corollary of \cite[Theorem 3.1]{Boue1998}.

\begin{prop}\label{prop:variational}
Let $g:C([0,T],\R^m)\to\R$ be a bounded Borel measurable function. Then
\begin{equation}\label{eq:variational}
	\log E\left[\exp\lb-g(W)\rb\right]=-\inf_{N>0}\inf_{v\in\L_N^2([0,T],\R^m)}E\left[\frac{1}{2}\int_0^T|v(s)|^2ds+g\lb W^v\rb\right],
\end{equation}
where $W^v=\{W^v(t),t\in[0,T]\}$ is the continuous process defined by
	\be\label{eq:Wv} W^v(t)=W(t)+\int_0^t v(s)ds\quad\text{for all }t\in[0,T].\ee
\end{prop}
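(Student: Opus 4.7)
The proposition is essentially [Theorem 3.1]{Boue1998} stated with a specific parameterization of the control class, so the plan is to quote their theorem and then match the two formulations of the infimum. First, I would invoke the Bou\'e-Dupuis variational representation to obtain
$$-\log E\left[\exp(-g(W))\right]=\inf_{v}E\left[\frac{1}{2}\int_0^T|v(s)|^2ds+g(W^v)\right],$$
where the infimum is over all $\{\F_t^W\}$-progressively measurable processes $v$ with $E\int_0^T|v(s)|^2ds<\infty$, i.e., over $\L^2([0,T],\R^m)$. It then suffices to show this infimum equals $\inf_{N>0}\inf_{v\in\L_N^2([0,T],\R^m)}F(v)$, where $F$ denotes the cost functional on the right-hand side of \eqref{eq:variational}.

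One direction is immediate: since $\L_N^2([0,T],\R^m)\subset\L^2([0,T],\R^m)$ for every $N>0$, the infimum over the smaller class dominates the unrestricted one. For the reverse direction, I would use a pathwise truncation. Given $v\in\L^2([0,T],\R^m)$, define the $\{\F_t^W\}$-stopping time $\tau_N=\inf\{t\in[0,T]:\int_0^t|v(s)|^2ds\geq N\}\wedge T$ and set $v^N(s)=v(s)\mathbf{1}_{\{s\leq\tau_N\}}$. Then $v^N$ is progressively measurable, satisfies $\int_0^T|v^N(s)|^2ds\leq N$ a.s., hence lies in $\L_N^2([0,T],\R^m)$, and obviously $E\int_0^T|v^N(s)|^2ds\leq E\int_0^T|v(s)|^2ds$. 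Moreover, on the event $\Omega_N=\{\int_0^T|v(s)|^2ds\leq N\}$ we have $\tau_N=T$, so $v^N=v$ pathwise on $[0,T]$ and therefore $W^{v^N}=W^v$ and $g(W^{v^N})=g(W^v)$ on $\Omega_N$.

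To conclude, let $M=\sup|g|<\infty$. Then
$$\left|E[g(W^{v^N})]-E[g(W^v)]\right|\leq 2M\,P(\Omega_N^c),$$
and $P(\Omega_N^c)\to 0$ as $N\to\infty$ by integrability of $\int_0^T|v(s)|^2ds$ (Markov's inequality). Consequently $\limsup_{N\to\infty}F(v^N)\leq F(v)$, so for any $\delta>0$ there exists $N$ with $F(v^N)\leq F(v)+\delta$; since $v^N\in\L_N^2([0,T],\R^m)$, taking the infimum over $v\in\L^2$ then $\delta\downarrow 0$ yields $\inf_{N>0}\inf_{v\in\L_N^2}F(v)\leq\inf_{v\in\L^2}F(v)$, completing the proof.

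The main technical point (not really an obstacle) is that $g$ is only assumed Borel measurable, not continuous, so one cannot simply use continuity and a.s.\ uniform convergence $W^{v^N}\to W^v$ to pass to the limit in $E[g(W^{v^N})]$. The truncation avoids this by ensuring $g(W^{v^N})$ agrees with $g(W^v)$ on $\Omega_N$ pathwise, so only the trivial bound $2M\,P(\Omega_N^c)$ is needed. Aside from this bookkeeping, everything is a direct consequence of the Bou\'e-Dupuis theorem.
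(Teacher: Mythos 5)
Your proof is correct and follows essentially the same route as the paper: invoke Bou\'e--Dupuis for the unrestricted infimum over $\L^2([0,T],\R^m)$, then close the gap to $\inf_N\inf_{\L_N^2}$ by truncating a control at the stopping time where its accumulated energy reaches $N$ and using boundedness of $g$ to control the contribution from the exceptional event $\Omega_N^c$. The only cosmetic difference is that the paper first fixes an $\alpha$-near-optimal control and picks a specific $N=2M(2M+\alpha)/\alpha$ via Chebyshev, whereas you truncate an arbitrary $v$ and send $N\to\infty$ using Markov's inequality, which is a slightly cleaner organization of the same estimate.
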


\begin{proof}
Fix a bounded Borel measurable function $g:C([0,T],\R^m)\to\R$. Let $M>0$ be such that $|g(w)|\leq M$ for all $w\in C([0,T],\R^m)$. According to \cite[Theorem 3.1]{Boue1998}, 
	\be\label{eq:boue}\log E\left[\exp\lb-g\lb W\rb\rb\right]=-\inf_{v\in\L^2([0,T],\R^m)}E\left[\frac{1}{2}\int_0^T|v(s)|^2ds+g\lb W^v\rb\right].\ee
Since $\L_N^2([0,T],\R^m)\subset\L^2([0,T],\R^m)$ for all $N>0$, we are left to show that
\begin{equation}\label{eq:infNinf}
	\inf_{N>0}\inf_{v\in\L_N^2([0,T],\R^m)}E\left[\frac{1}{2}\int_0^T|v(s)|^2ds+g\lb W^v\rb\right]\leq\inf_{v\in\L^2([0,T],\R^m)}E\left[\frac{1}{2}\int_0^T|v(s)|^2ds+g\lb W^v\rb\right].
\end{equation}
Let $\alpha>0$ be arbitrary and $v^\alpha\in\L^2([0,T],\R^m)$ be such that
	\be\label{eq:valpha}E\left[\frac{1}{2}\int_0^T|v^\alpha(s)|^2ds+g\lb W^{v^\alpha}\rb\right]\leq\inf_{v\in\L^2([0,T],\R^m)} E\lsb\frac{1}{2}\int_0^T|v(s)|^2ds+g\lb W^v\rb\rsb+\alpha.\ee
Set $N=2M(2M+\alpha)/\alpha$. Define the $\{\F_t^W\}$-stopping time
	$$\gamma^{\alpha,N}=\inf\lcb t\in[0,T]:\frac{1}{2}\int_0^t|v^\alpha(s)|^2ds\geq N\rcb,$$
with the convention that the infimum over the empty set is equal to $T$, and define the process $v^{\alpha,N}=\{v^{\alpha,N}(t),t\in[0,T]\}$ in $\L_N^2([0,T],\R^m)$ by 
	$$v^{\alpha,N}(t)=v^\alpha(t)1_{\{t\in[0,\gamma^{\alpha,N}]\}},\qquad t\in[0,T].$$
It follows from \eqref{eq:valpha}, \eqref{eq:boue} and the bound on $g$ that
	$$\E\lsb\frac{1}{2}\int_0^T|v^\alpha(s)|^2ds\rsb\leq2M+\alpha.$$
By Chebyshev's inequality and our choice of $N$,
	\be\label{eq:PvalphaNvalpha}\P\lb v^{\alpha,N}\neq v^\alpha\rb=\P\lb\frac{1}{2}\int_0^T|v^\alpha(s)|^2ds>N\rb\leq\frac{\alpha}{2M}.\ee
Thus, using the fact that $|v^{\alpha,N}|\leq|v^\alpha|$ holds pointwise, the bound on $g$, \eqref{eq:valpha} and \eqref{eq:PvalphaNvalpha}, we have
\begin{align*}
	\E\left[\frac{1}{2}\int_0^T|v^{\alpha,N}(s)|^2ds+g\lb W^{v^{\alpha,N}}\rb\right]&\leq\E\left[\frac{1}{2}\int_0^T|v^\alpha(s)|^2ds+g\lb W^{v^\alpha}\rb\right]+2M\P\lb v^{\alpha,N}\neq v^\alpha\rb\\
	&\leq\inf_{v\in\L^2([0,T],\R^m)}E\left[\frac{1}{2}\int_0^T|v(s)|^2ds+g\lb W^v\rb\right]+2\alpha.
\end{align*}
Since $v^{\alpha,N}\in\L_N^2([0,T],\R^m)$ and $\alpha>0$ was arbitrary, this completes the proof of \eqref{eq:infNinf}.
\end{proof}

For the following lemma, given $\phi\in\C$ and $\ve>0$, recall the Borel measurable function $\Lambda_{\phi,T}^\ve:C([0,T],\R^m)\to C([-\delay,T],\R^d)$ from Corollary \ref{cor:strong} that satisfies $P(\Lambda_{\phi,T}^\ve(W)=\X^{\ve,\phi})=1$.

\begin{lemma}\label{lem:varX}
Let $f:C([-\delay,T],\R^d)\to\R$ be a bounded Borel measurable function, $\phi\in\C$ and $\ve>0$. Then
\begin{equation}\label{eq:efXepsrep}
	\ve\log \E\left[\exp\lb-\frac{f(\X^{\ve,\phi})}{\ve}\rb\right]=-\inf_{N>0}\inf_{v\in\L_N^2([0,T],\R^m)}\E\left[\frac{1}{2}\int_0^T|v(s)|^2ds+f(\X^{\ve,v,\phi})\right],
\end{equation}
where $\X^{\ve,v,\phi}=\{\X^{\ve,v,\phi}(t),t\in[-\delay,T]\}$ is defined by $\X^{\ve,v,\phi}=\Lambda_{\phi,T}^\ve(W^{v/\sqrt{\ve}})$. In addition, for $N>0$, $\X_0^{\ve,v,\phi}=\phi$, $\X^{\ve,v,\phi}(t)$ is $\F_t^W$-measurable for each $t\in[0,T]$ and a.s.\ $\X^{\ve,v,\phi}$ satisfies, for all $t\in[0,T]$,
\begin{align}\label{eq:Xvve}
	\X^{\ve,v,\phi}(t)&=\phi(0)+\int_0^t\f(\X_s^{\ve,v,\phi})ds+\sqrt{\ve}\int_0^t\g(\X_s^{\ve,v,\phi})dW(s)+\int_0^t\g(\X_s^{\ve,v,\phi})v(s).
\end{align}
\end{lemma}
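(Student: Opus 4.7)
The plan is to reduce \eqref{eq:efXepsrep} to Proposition \ref{prop:variational} via the measurable strong-solution map $\Lambda_{\phi,T}^\ve$ combined with a rescaling, and to extract \eqref{eq:Xvve} from Corollary \ref{cor:strong} via Girsanov's theorem applied path-wise to this map.

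First I would set $g := \ve^{-1}\,f\circ\Lambda_{\phi,T}^\ve$, which is bounded (since $f$ is bounded) and Borel measurable (by Corollary \ref{cor:strong}). Because $\X^{\ve,\phi}=\Lambda_{\phi,T}^\ve(W)$ holds $\P$-a.s., applying Proposition \ref{prop:variational} to this $g$ and multiplying through by $\ve$ yields
\[\ve\log\E\lsb\exp(-f(\X^{\ve,\phi})/\ve)\rsb = -\inf_{N>0}\inf_{v\in\L_N^2([0,T],\R^m)}\E\lsb\frac{\ve}{2}\int_0^T|v(s)|^2ds + f(\Lambda_{\phi,T}^\ve(W^v))\rsb.\]
I would then substitute $\tilde v=\sqrt{\ve}\,v$, which is a bijection from $\L_N^2([0,T],\R^m)$ onto $\L_{\ve N}^2([0,T],\R^m)$ that transforms the quadratic cost as $\frac{\ve}{2}\int_0^T|v|^2ds=\frac{1}{2}\int_0^T|\tilde v|^2ds$ and the shifted Brownian path as $W^v=W^{\tilde v/\sqrt{\ve}}$, so that $\Lambda_{\phi,T}^\ve(W^v)=\X^{\ve,\tilde v,\phi}$ by the definition of $\X^{\ve,v,\phi}$ in the statement of the lemma. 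Since $N$ ranges over $(0,\infty)$ if and only if $\ve N$ does, the double infimum is unaffected by this reparametrization, and renaming $\tilde v$ back to $v$ gives \eqref{eq:efXepsrep}.

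For the identity \eqref{eq:Xvve}, fix $N>0$ and $v\in\L_N^2([0,T],\R^m)$. Since $\int_0^T|v(s)|^2ds\leq N$ holds $\P$-a.s., Novikov's condition is satisfied trivially, so
\[Z(T)=\exp\lb-\ve^{-1/2}\int_0^T v(s)\cdot dW(s)-(2\ve)^{-1}\int_0^T|v(s)|^2ds\rb\]
defines an equivalent probability measure $Q\sim\P$ on $(\Omega,\F_T^W)$ via $dQ/d\P=Z(T)$, and by Girsanov's theorem the shifted process $W^{v/\sqrt{\ve}}$ is a standard $m$-dimensional Brownian motion under $Q$ adapted to $\{\F_t^W\}$. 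Applying Corollary \ref{cor:strong} on the filtered probability space $(\Omega,\F,\{\F_t^W\},Q)$ with driving Brownian motion $W^{v/\sqrt{\ve}}$, the process $\X^{\ve,v,\phi}=\Lambda_{\phi,T}^\ve(W^{v/\sqrt{\ve}})$ is the unique strong solution of the SDDE with initial condition $\phi$; that is, $Q$-a.s.,
\[\X^{\ve,v,\phi}(t)=\phi(0)+\int_0^t\f(\X_s^{\ve,v,\phi})ds+\sqrt{\ve}\int_0^t\g(\X_s^{\ve,v,\phi})dW^{v/\sqrt{\ve}}(s),\qquad t\in[0,T].\]
Writing $dW^{v/\sqrt{\ve}}(s)=dW(s)+\ve^{-1/2}v(s)ds$ and performing the standard Girsanov manipulation that identifies the $Q$-stochastic integral against $W^{v/\sqrt{\ve}}$ with the $\P$-It\^o integral against $W$ plus the drift correction, this rearranges to \eqref{eq:Xvve} $Q$-a.s., and hence $\P$-a.s. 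The $\F_t^W$-measurability of $\X^{\ve,v,\phi}(t)$ follows from the non-anticipative nature of $\Lambda_{\phi,T}^\ve$ (from its Picard-type construction in the proof of Corollary \ref{cor:strong}) together with the progressive measurability of $v$.

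The main obstacle is mostly bookkeeping: pushing strong-solution identities across the equivalent change of measure between $(\Omega,\F,\{\F_t^W\},\P)$ and $(\Omega,\F,\{\F_t^W\},Q)$, and verifying that the $\ve$-rescaling of the Bou\'e--Dupuis representation matches the normalization appearing in \eqref{eq:efXepsrep}. Both reduce to standard Girsanov/strong-solution arguments, made automatic by $v$ being uniformly $L^2$-bounded so that Novikov's condition is trivial.
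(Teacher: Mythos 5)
Your proof is correct and takes essentially the same route as the paper: apply Proposition \ref{prop:variational} with $g=\ve^{-1}f\circ\Lambda_{\phi,T}^\ve$, reparametrize $v\mapsto\sqrt{\ve}\,v$ (noting $\L_N^2\leftrightarrow\L_{\ve N}^2$, which leaves the double infimum invariant), and then obtain \eqref{eq:Xvve} by Novikov/Girsanov with density proportional to $\exp(-\ve^{-1/2}\int_0^T v'dW-(2\ve)^{-1}\int_0^T|v|^2ds)$, under which $W^{v/\sqrt{\ve}}$ is an $\{\F_t^W\}$-Brownian motion, followed by $Q\sim\P$ equivalence. Your write-up is in fact a bit more careful than the paper's displayed chain of equalities (which compresses the $\sqrt{\ve}$-rescaling and contains typographical slips in the exponential-martingale formula), but the underlying argument is identical.
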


\begin{remark}
When $v\in\L^2([0,T],\R^m)$ is identically zero, $\X^{\ve,v,\phi}=\X^{\ve,\phi}$ a.s.
\end{remark}

\begin{proof}
Fix a bounded Borel measurable function $f:C([-\delay,T],\R^d)\to\R$. It follows from Corollary \ref{cor:strong} and the variational representation \eqref{eq:variational} (with $g=f\circ\Lambda_{\phi,T}^\ve$) that
\begin{align*}
	\ve\log \E\lsb\exp\lb-\frac{f(\X^{\ve,\phi})}{\ve}\rb\rsb&=-\inf_{N>0}\inf_{v\in\L^2([0,T],\R^m)}E\left[\frac{\ve}{2}\int_0^T|v(s)|^2ds+f\circ\Lambda_{\phi,T}^\ve\lb W^v\rb\right]\\
	&=-\inf_{N>0}\inf_{v\in\L^2([0,T],\R^m)}E\left[\frac{1}{2}\int_0^T|v(s)|^2ds+f\circ\Lambda_{\phi,T}^\ve(W^{v/\sqrt{\ve}})\right],
\end{align*}
and so \eqref{eq:efXepsrep} holds. Fix $N>0$ and $v\in\L_N^2([0,T],\R^m)$. Let $\ve>0$. Then
	$$\E\lsb\exp\lb\frac{1}{2\ve}\int_0^T|v(s)|^2ds\rb\rsb\leq\exp\lb\frac{N}{2\ve}\rb<\infty.$$
Thus, by Novikov's condition (see, e.g., \cite[Proposition VIII.1.15]{Revuz1999}),
	$$\lcb\mathcal{E}(t)=\exp\lb\int_0^T(v(s))'dW(s)-\frac{1}{2}\int_0^t|v(s)|^2ds\rb,\F_t^W,t\in[0,T]\rcb,$$
define a martingale. It follows from Girsanov's theorem (see, e.g., \cite[Chapter VIII, Section 1]{Revuz1999}) that the probability measure $Q$ on $(\Omega,\F_T^W)$, defined by $Q(A)=E[\mathcal{E}(T)1_A]$ for all $A\in\F_T^W$, is equivalent to $\P$, and under $Q$, $\{W^{v/\sqrt{\ve}},\F_t^W,t\in[0,T\}$ is a Brownian motion martingale. Therefore, by Corollary \ref{cor:strong}, $\X^{\ve,v,\phi}$ is $\F_t^W$-measurable for each $t\in[0,T]$, and $Q$-a.s.\ $\X_0^{\ve,v,\phi}=\phi$ and for all $t\in[0,T]$,
\begin{align*}
	\X^{\ve,v,\phi}(t)&=\phi(0)+\int_0^t\f(\X_s^{\ve,v,\phi})ds+\sqrt{\ve}\int_0^t\g(\X_s^{\ve,v,\phi})dW^{v/\sqrt{\ve}}(s)\\ \notag
	&=\phi(0)+\int_0^t\f(\X_s^{\ve,v,\phi})ds+\sqrt{\ve}\int_0^t\g(\X_s^{\ve,v,\phi})dW(s)+\int_0^t\g(\X_s^{\ve,v,\phi})v(s)ds,
\end{align*}
where we have used the definition of $W^{v/\sqrt{\ve}}$ in the second equality. Since $Q\sim\P$, it follows that $\P$-a.s.\ $\X_0^{\ve,v,\phi}=\phi$ and \eqref{eq:Xvve} holds.
\end{proof}

\begin{lemma}\label{lem:Xv}
Given $v\in\L^2([0,T],\R^m)$ and $\phi\in\C$, there is a unique continuous process $\X^{v,\phi}=\{\X^{v,\phi}(t),t\in[-\delay,T]\}$ such that $\X^{v,\phi}(t)$ is $\F_t^W$-measurable for each $t\in[0,T]$, and a.s.\ $\X_0^{v,\phi}=\phi$ and
	\be\label{eq:Xv}\X^{v,\phi}(t)=\phi(0)+\int_0^t\f(\X_s^{v,\phi})ds+\int_0^t\g(\X_s^{v,\phi})v(s)ds,\qquad t\in[0,T].\ee
In addition, a.s.\ $v\in U_T(\X^{v,\phi})$.
\end{lemma}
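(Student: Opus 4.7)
The plan is to solve \eqref{eq:Xv} pathwise, which is possible because the only stochastic object appearing is $v$ and $\P$-a.s.\ the path $s\mapsto v(s,\omega)$ lies in $L^2([0,T],\R^m)$; no It\^o integral is present. For $\P$-a.e.\ $\omega$, set $N(\omega)=\int_0^T|v(s,\omega)|^2ds<\infty$ and construct a solution via Picard iteration on $[-\delay,T]$: let $\X^{(0)}(t)=\phi(0)$ for $t\in[0,T]$ with $\X_0^{(0)}=\phi$, and for $n\geq0$ define $\X_0^{(n+1)}=\phi$ together with
\begin{equation*}
	\X^{(n+1)}(t)=\phi(0)+\int_0^t\f(\X_s^{(n)})ds+\int_0^t\g(\X_s^{(n)})v(s)ds,\qquad t\in[0,T].
\end{equation*}

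Next, I would use Assumption \ref{ass:lip} together with the Cauchy--Schwarz inequality applied to the $\g$-integral, so that the $|v|^2$ factor is absorbed into $N(\omega)$, to obtain the key contraction estimate
\begin{equation*}
	\norm{\X^{(n+1)}-\X^{(n)}}_{[-\delay,t]}^2\leq 2\lip_1\lb t+N(\omega)\rb\int_0^t\norm{\X^{(n)}-\X^{(n-1)}}_{[-\delay,s]}^2ds,\qquad t\in[0,T].
\end{equation*}
Iterating shows that $\norm{\X^{(n+1)}-\X^{(n)}}_{[-\delay,T]}^2$ decays at least like $(2\lip_1(T+N(\omega)))^n T^n/n!$, so $\{\X^{(n)}(\omega)\}$ is Cauchy in $C([-\delay,T],\R^d)$ and converges uniformly to a continuous limit $\X^{v,\phi}(\cdot,\omega)$. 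The linear growth bound \eqref{eq:lipbound}, together with $v(\cdot,\omega)\in L^2$, justifies passing to the limit inside the integrals in the recursion via dominated convergence, so $\X^{v,\phi}(\omega)$ satisfies \eqref{eq:Xv}. Pathwise uniqueness comes from the same Lipschitz estimate: if $\X,\Y$ both solve \eqref{eq:Xv} with common $\phi$ and $v$, then
\begin{equation*}
	\norm{\X-\Y}_{[-\delay,t]}^2\leq 2\lip_1\lb t+N(\omega)\rb\int_0^t\norm{\X-\Y}_{[-\delay,s]}^2ds,
\end{equation*}
and Gronwall's inequality forces $\X\equiv\Y$ on $[-\delay,T]$.

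Adaptedness is handled inductively within the Picard construction: $\X^{(0)}(t)$ is deterministic and thus trivially $\F_t^W$-measurable for each $t\in[0,T]$, and progressive measurability of $v$ with respect to $\{\F_t^W,t\in[0,T]\}$ combined with Fubini ensures that if $s\mapsto\X_s^{(n)}$ is $\{\F_t^W\}$-adapted then so is $s\mapsto\X_s^{(n+1)}$; taking the uniform pathwise limit preserves this measurability. The final assertion that a.s.\ $v\in U_T(\X^{v,\phi})$ is immediate upon comparing \eqref{eq:Xv} with the defining equation \eqref{eq:xv} of $U_T$. The only delicate point I anticipate is reconciling the pathwise construction with the adaptedness requirement, but this is resolved cleanly because each Picard iterate is manifestly $\{\F_t^W\}$-adapted, so the uniform limit inherits adaptedness without additional work.
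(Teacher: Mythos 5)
Your proposal is correct and follows essentially the same route as the paper's own sketch: a pathwise Picard iteration using Assumption \ref{ass:lip} and Cauchy--Schwarz on the set where $v(\omega)\in L^2([0,T],\R^m)$, Gronwall's inequality for uniqueness, and adaptedness inherited from the progressive measurability of $v$. The contraction estimate with the factor $2\lip_1(t+N(\omega))$ is the right one and the remaining observations (identification of $v$ as an element of $U_T(\X^{v,\phi})$, dominated convergence to pass to the limit) are standard and match the paper's intent.
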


\begin{remark}\label{rmk:Xvrate}
Since $X^{v,\phi}$ a.s.\ satisfies \eqref{eq:Xv}, it follows that a.s.\ $v\in U_T(X^{v,\phi})$. Thus, by \eqref{eq:rate1}, a.s. $I_T(X^{v,\phi})\leq\frac{1}{2}\int_0^T|v(s)|^2ds$.
\end{remark}

\begin{remark}\label{rmk:Xvxv}
Given $\phi\in\C$, $x\in C([-\delay,T],\R^d)$ satisfying $\x_0=\phi$, and $u\in U_T(x)$, it follows from \eqref{eq:xv} and \eqref{eq:Xv} that $\X^{u,\phi}=x$.
\end{remark}

\begin{remark}
When $v\in\L^2([0,T],\R^m)$ is identically zero, $\X^{v,\phi}=\x^\phi$ a.s.
\end{remark}

\begin{proof}[Sketch of proof]
Given $v\in\L^2([0,T],\R^m)$, it follows from the definition of $\L^2([0,T],\R^m)$ that $v$ a.s.\ takes values in $L^2([0,T],\R^m)$. Let $\omega\in\Omega$ be such that $v(\omega)\in L^2([0,T],\R^m)$. Then, following a standard Picard iteration argument, which uses Assumption \ref{ass:lip}, the Cauchy-Schwarz inequality and the fact that $v(\omega)\in L^2([0,T],\R^m)$, there exists a path $\X^{v,\phi}(\omega)$ satisfying $\X_0^{v,\phi}(\omega)=\phi$ and \eqref{eq:Xv}. Uniqueness of $\X^{v,\phi}(\omega)$ follows from Assumption \ref{ass:lip}, the fact that $v(\omega)\in L^2([0,T],\R^m)$, the Cauchy-Schwarz inequality and a standard argument using Gronwall's inequality. The fact that $v(\omega)\in U_T(\X^{v,\phi}(\omega))$ follows from \eqref{eq:Xv} and the definition of the set $U_T(\X^{v,\phi}(\omega))$ given in Section \ref{sec:mainrate}. Finally, since $v$ is progressively measurable with respect to $\{\F_t^W,t\geq0\}$, it follows that $\X^{v,\phi}(t)$ is $\F_t^W$-measurable for each $t\in[0,T]$.
\end{proof}
	
\begin{lemma}\label{lem:distribution}
Let $N>0$, $\K$ be a bounded subset of $\C$ and $\{v^{\ve,\phi}\}_{\ve>0,\phi\in\K}$ be a family in $\L_N^2([0,T],\R^m)$. Then
	\be\label{eq:EXbound}\limsup_{\ve\to0}\sup_{\phi\in\K}\E\lsb\norm{\X^{\ve,v^{\ve,\phi},\phi}}_{[-\delay,T]}^2\rsb<\infty,\ee
and
	\be\label{eq:PXxzero}\lim_{\ve\to0}\sup_{\phi\in\K}\E\lsb\norm{\X^{\ve,v^{\ve,\phi},\phi}-\X^{v^{\ve,\phi},\phi}}_{[-\delay,T]}^2\rsb=0.\ee
\end{lemma}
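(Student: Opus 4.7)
The plan is to prove the two assertions by standard stochastic estimates, carefully tracking the constants to verify uniformity in $\phi\in\K$. Throughout, abbreviate $\X^\ve=\X^{\ve,v^{\ve,\phi},\phi}$ and $\X=\X^{v^{\ve,\phi},\phi}$, recall that $\X^\ve$ satisfies \eqref{eq:Xvve} and $\X$ satisfies \eqref{eq:Xv}, both with initial segment $\phi$, and note that $M_0:=\sup_{\phi\in\K}\norm{\phi}_{[-\delay,0]}<\infty$ since $\K$ is bounded.

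\textbf{Step 1 (a priori moment bound, \eqref{eq:EXbound}).} Restricting to $\ve\in(0,1]$, take a supremum up to time $t\in[0,T]$ in \eqref{eq:Xvve}, square, and apply $(a+b+c+d)^2\leq 4(a^2+b^2+c^2+d^2)$. The first term gives $4|\phi(0)|^2\leq 4M_0^2$. For the drift, the Cauchy--Schwarz inequality and \eqref{eq:lipbound} give a bound of the form $4T\lip_2\int_0^t(1+\norm{\X^\ve}_{[-\delay,s]}^2)ds$. For the deterministic integral involving $v^{\ve,\phi}$, Cauchy--Schwarz and the constraint $\int_0^T|v^{\ve,\phi}(s)|^2ds\leq N$ together with \eqref{eq:lipbound} bound the $\sup_{r\leq t}$ of its square by $4N\lip_2\int_0^t(1+\norm{\X^\ve}_{[-\delay,s]}^2)ds$. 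For the It\^o integral, Doob's $L^2$ inequality (after a standard localization by stopping times to justify the finiteness of what follows) together with \eqref{eq:lipbound} gives $16\ve\lip_2\int_0^T(1+\E\norm{\X^\ve}_{[-\delay,s]}^2)ds$. Taking expectations and applying Gronwall's inequality then yields a bound on $\E\norm{\X^\ve}_{[-\delay,T]}^2$ depending only on $M_0$, $T$, $\lip_2$, and $N$, which proves \eqref{eq:EXbound}.

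\textbf{Step 2 (difference estimate, \eqref{eq:PXxzero}).} Let $Z^\ve=\X^\ve-\X$, so that $Z_0^\ve\equiv0$ and consequently $\norm{\X_s^\ve-\X_s}_{[-\delay,0]}\leq\norm{Z^\ve}_{[-\delay,s]}$ for every $s\in[0,T]$. Subtracting \eqref{eq:Xv} from \eqref{eq:Xvve}, taking the supremum over $r\leq t$, squaring, and applying $(a+b+c)^2\leq 3(a^2+b^2+c^2)$, the drift difference is controlled by Cauchy--Schwarz and the Lipschitz condition \eqref{eq:lip} as $3T\lip_1\int_0^t\norm{Z^\ve}_{[-\delay,s]}^2ds$; the $v^{\ve,\phi}$-integral is controlled analogously by $3N\lip_1\int_0^t\norm{Z^\ve}_{[-\delay,s]}^2ds$; and Doob's inequality bounds the stochastic integral contribution in expectation by $12\ve\lip_2\int_0^T(1+\E\norm{\X^\ve}_{[-\delay,s]}^2)ds$. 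Using the a priori bound from Step~1, this last quantity is $\leq C\ve$ with $C$ independent of $\phi\in\K$ and $\ve\in(0,1]$. Thus
\begin{equation*}
\E\norm{Z^\ve}_{[-\delay,t]}^2\leq C\ve+(3T\lip_1+3N\lip_1)\int_0^t\E\norm{Z^\ve}_{[-\delay,s]}^2ds,
\end{equation*}
and Gronwall's inequality gives $\E\norm{Z^\ve}_{[-\delay,T]}^2\leq C\ve\exp(3(T+N)\lip_1T)$, where all constants are independent of $\phi\in\K$. Taking $\ve\to0$ yields \eqref{eq:PXxzero}.

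\textbf{Main obstacle.} The routine part is the chain of Cauchy--Schwarz, Lipschitz, linear-growth, BDG/Doob, and Gronwall estimates; the only point that requires care is confirming that every constant produced depends only on $M_0$, $N$, $T$, $\lip_1$, $\lip_2$, and not on $\ve$ or on the particular $\phi\in\K$ or choice of $v^{\ve,\phi}\in\L_N^2([0,T],\R^m)$. This uniformity is what makes the a priori bound and the $\sqrt{\ve}$ decay in Step~2 survive the $\sup_{\phi\in\K}$. A secondary point is justifying that the stochastic integrals are genuine square-integrable martingales so that Doob's inequality applies directly; this can be handled by a standard stopping-time localization $\tau_R=\inf\{t:\norm{\X^\ve}_{[-\delay,t]}\geq R\}$, deriving the bounds up to $t\wedge\tau_R$ and then sending $R\to\infty$ via monotone convergence.
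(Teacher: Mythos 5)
Your proposal is correct and follows essentially the same route as the paper's proof: bound both quantities via $(a+\cdots)^2 \le k(a^2+\cdots)$, Cauchy--Schwarz, the Lipschitz/linear-growth bounds, Doob's $L^2$ inequality with the It\^o isometry, and Gronwall, checking at each step that the constants depend only on $M_0$, $N$, $T$, $\lip_1$, $\lip_2$. The only cosmetic difference is that you make the stopping-time localization explicit where the paper takes the square-integrability of the stochastic integral for granted (which is justified under the uniform Lipschitz assumption).
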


\begin{remark}\label{rmk:distribution}
By \eqref{eq:PXxzero} with $v^{\ve,\phi}$ identically zero for each $\ve>0$, we see that
	\be\lim_{\ve\to0}\sup_{\phi\in\K}\E\lsb\norm{\X^{\ve,\phi}-\x^\phi}_{[-\delay,T]}^2\rsb=0.\ee
\end{remark}

\begin{proof} 
We first prove \eqref{eq:EXbound}. Let $\phi\in\K$. By \eqref{eq:Xvve}, the fact that $\X_0^{\ve,v^{\ve,\phi},\phi}=\phi$, three applications of the Cauchy-Schwartz inequality, the definition of $\L_N^2([0,T],\R^m)$, Doob's inequality, the It\^o isometry, \eqref{eq:lipbound} and Fubini's theorem, we have, for each $\ve>0$,
\begin{align*}
	\E\lsb\norm{\X^{\ve,v^{\ve,\phi},\phi}}_{[-\delay,t]}^2\rsb&\leq4\norm{\phi}_{[-\delay,0]}^2+4\E\lsb\sup_{0\leq s\leq t}\left|\int_0^s\f(\X_r^{\ve,v^{\ve,\phi},\phi})dr\right|^2\rsb\\
	&\qquad+4\E\lsb\sup_{0\leq s\leq t}\left|\int_0^s\g(\X_r^{\ve,v^{\ve,\phi},\phi})v^{\ve,\phi}(r)dr\right|^2\rsb\\
	&\qquad+4\ve\E\lsb\sup_{0\leq s\leq t}\left|\int_0^s\g(\X_r^{\ve,v^{\ve,\phi},\phi})dW(r)\right|^2\rsb\\
	&\leq4\norm{\phi}_{[-\delay,0]}^2+4t\E\lsb\int_0^t|\f(\X_s^{\ve,v^{\ve,\phi},\phi})|^2ds\rsb\\
	&\qquad+4(N+4\ve) \E\lsb\int_0^t|\g(\X_s^{\ve,v^{\ve,\phi},\phi})|^2ds\rsb\\
	&\leq4\norm{\phi}_{[-\delay,0]}^2+4(t+N+4\ve)\lip_2\int_0^tE\lsb \norm{1+\X^{\ve,v^{\ve,\phi},\phi}}_{[-\delay,s]}^2\rsb ds.
\end{align*}
Therefore, by Gronwall's inequality,
	\be\label{eq:1normXbound}\E\lsb1+\norm{\X^{\ve,v^{\ve,\phi},\phi}}_{[-\delay,t]}^2\rsb\leq\lb1+4\norm{\phi}_{[-\delay,0]}^2\rb\exp\lb4(t+N+4\ve)\lip_2t\rb.\ee
Taking supremums over $\phi\in\K$ on both sides and letting $\ve\to0$, we see that \eqref{eq:EXbound} holds.
	
Now let $\phi\in\K$ and $\ve>0$. By \eqref{eq:Xvve}, \eqref{eq:Xv}, three applications of the Cauchy-Schwarz inequality, the definition of $\L_N^2([0,T],\R^m)$, Doob's inequality, the It\^o isometry, Assumption \ref{ass:lip}, \eqref{eq:lipbound} and Fubini's theorem, we obtain, for $t\in[0,T]$,
\begin{align*}
	\E\lsb\norm{\X^{\ve,v^{\ve,\phi},\phi}-\X^{v^{\ve,\phi},\phi}}_{[-\delay,t]}^2\rsb&\leq\E\lsb 3\sup_{s\in[0,t]}\left|\int_0^s \lb\f(\X_r^{\ve,v^{\ve,\phi},\phi})-\f(\X_r^{v^{\ve,\phi},\phi})\rb dr\right|^2\rsb\\
	&\qquad+3\E\lsb\sup_{s\in[0,t]}\left|\int_0^s\lb\g(\X_r^{\ve,v^{\ve,\phi},\phi})-\g(\X_r^{v^{\ve,\phi},\phi})\rb v^{\ve,\phi}(s)ds\right|^2\rsb\\
	&\qquad+3\ve\E\lsb\sup_{s\in[0,t]}\left|\int_0^s\g(\X_r^{\ve,v^{\ve,\phi},\phi})dW(r)\right|^2\rsb\\
	&\leq3(t+N)\lip_1\E\lsb\int_0^t\norm{\X^{\ve,v^{\ve,\phi},\phi}-\X^{v^{\ve,\phi},\phi}}_{[-\delay,s]}^2ds\rsb\\
	&\qquad+12\ve t\lip_2\E\lsb 1+\norm{\X^{\ve,v^{\ve,\phi},\phi}}_{[-\delay,t]}^2\rsb.
\end{align*}
An application of Gronwall's inequality yields, for $t\in[0,T]$,
	$$\E\lsb\norm{\X^{\ve,v^{\ve,\phi},\phi}-\X^{v^{\ve,\phi},\phi}}_{[-\delay,t]}^2\rsb\leq 12\ve t\lip_2\E\lsb 1+\norm{\X^{\ve,v^{\ve,\phi},\phi}}_{[-\delay,t]}^2\rsb\exp(3\lip_2t(t+N)).$$
Substituting in with \eqref{eq:1normXbound}, taking supremums over $\phi\in\K$ on both sides  and letting $\ve\to0$ yields \eqref{eq:PXxzero}.
\end{proof}

\subsection{Uniform Laplace principle}

The following is the main result of this section.

\begin{theorem}\label{thm:uniformLP}
For any bounded Lipschitz continuous function $f:C([-\delay,T],\R^d)\to\R$ and bounded subset $\K\subset\C$,
	$$\limsup_{\ve\to0}\sup_{\phi\in\K}\left|\ve\log \E\lsb\exp\lb-\frac{f\lb\X^{\ve,\phi}\rb}{\ve}\rb\rsb+\inf_{\x\in C([-\delay,T],\R^d)}\left\{I_T^\phi(\x)+f(\x)\right\}\right|=0.$$
\end{theorem}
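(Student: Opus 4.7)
The plan is to apply the variational representation from Lemma \ref{lem:varX} and reduce the uniform Laplace principle to matching upper and lower asymptotic comparisons between the controlled functional
\[
\inf_{N>0}\inf_{v\in\L_N^2([0,T],\R^m)}\E\lsb\tfrac12\int_0^T|v(s)|^2\,ds+f(\X^{\ve,v,\phi})\rsb
\]
and the infimum $\inf_{\x}\{I_T^\phi(\x)+f(\x)\}$, uniformly in $\phi\in\K$. Throughout, boundedness of $f$ supplies a priori $L^2$-bounds on near-optimal controls, and the Lipschitz continuity of $f$ converts the $L^2$-closeness of paths supplied by Lemma \ref{lem:distribution} into uniform closeness of expected functionals. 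The approach bypasses the usual weak-convergence/tightness step, which would fail since bounded subsets of $\C$ are not precompact.

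For the Laplace upper bound, fix $\alpha>0$. For each $\phi\in\K$, pick $\x^\phi\in C([-\delay,T],\R^d)$ with $\x^\phi_0=\phi$ and $I_T^\phi(\x^\phi)+f(\x^\phi)\leq\inf_{\x}\{I_T^\phi(\x)+f(\x)\}+\alpha$, then pick $u^\phi\in U_T(\x^\phi)$ with $\tfrac12\int_0^T|u^\phi(s)|^2\,ds\leq I_T^\phi(\x^\phi)+\alpha$. Remark \ref{rmk:ratexphi} gives $I_T^\phi(\x^\phi)=0$ for the deterministic DDE trajectory with the same initial condition, so $\inf_{\x}\{I_T^\phi+f\}\leq\|f\|_\infty$ uniformly in $\phi$, and hence $\tfrac12\int|u^\phi|^2\leq 2\|f\|_\infty+2\alpha=:N$. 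Viewed as a deterministic (constant in $\omega$) element of $\L_N^2([0,T],\R^m)$, the control $u^\phi$ satisfies $\X^{u^\phi,\phi}=\x^\phi$ by Remark \ref{rmk:Xvxv}; Lemma \ref{lem:distribution} together with the Lipschitz continuity of $f$ yield $\sup_{\phi\in\K}|\E[f(\X^{\ve,u^\phi,\phi})]-f(\x^\phi)|\to 0$. Inserting $v=u^\phi$ into the variational representation gives variational infimum $\leq\inf_{\x}\{I_T^\phi+f\}+2\alpha+o(1)$ uniformly in $\phi$; letting $\alpha\to 0$ yields one direction.

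For the Laplace lower bound, fix $\alpha>0$ and, for each $\ve$ and $\phi$, pick a control attaining the infimum in \eqref{eq:efXepsrep} up to $\alpha$. Since that infimum is $\leq\|f\|_\infty$, a stopping-time truncation identical to the one in the proof of Proposition \ref{prop:variational} allows us to assume $v^{\ve,\phi}\in\L_N^2$ for an $N$ depending only on $\|f\|_\infty$ and $\alpha$, not on $\ve$ or $\phi$. Lemma \ref{lem:distribution} and the Lipschitz continuity of $f$ give $\sup_{\phi\in\K}\E|f(\X^{\ve,v^{\ve,\phi},\phi})-f(\X^{v^{\ve,\phi},\phi})|\to 0$, so the functional is within $o(1)$ of $\E[\tfrac12\int|v^{\ve,\phi}|^2+f(\X^{v^{\ve,\phi},\phi})]$. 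By Lemma \ref{lem:Xv} and Remark \ref{rmk:Xvrate}, almost surely $\X^{v^{\ve,\phi},\phi}_0=\phi$ and $I_T(\X^{v^{\ve,\phi},\phi})\leq\tfrac12\int|v^{\ve,\phi}|^2$, whence pathwise
\[
\tfrac12\int_0^T|v^{\ve,\phi}(s)|^2\,ds+f(\X^{v^{\ve,\phi},\phi})\geq I_T^\phi(\X^{v^{\ve,\phi},\phi})+f(\X^{v^{\ve,\phi},\phi})\geq\inf_{\x}\{I_T^\phi(\x)+f(\x)\}.
\]
Taking expectations and letting first $\ve\to 0$ and then $\alpha\to 0$ gives the matching bound.

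The main obstacle is uniformity in $\phi\in\K$: the standard weak-convergence proof (e.g.\ the proof of \cite[Theorem 4.3]{Boue1998}) extracts limits of $(v^{\ve,\phi},\X^{\ve,v^{\ve,\phi},\phi})$ along subsequences, which requires joint tightness in $\phi$ and fails on non-compact bounded $\K\subset\C$. Our approach side-steps this by working on a common probability space with a common Brownian motion (Corollary \ref{cor:strong}) and using the quantitative Gronwall-type estimate of Lemma \ref{lem:distribution}, which is uniform over $\phi\in\K$ and over all controls in a common $\L_N^2$; the pathwise rate-function bound from Remark \ref{rmk:Xvrate} then converts the control cost into $\inf_{\x}\{I_T^\phi+f\}$ pointwise in $\phi$.
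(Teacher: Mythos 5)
Your proof follows essentially the same route as the paper's: one direction inserts a fixed near-minimizing deterministic control and uses Remark \ref{rmk:Xvxv} together with Lemma \ref{lem:distribution}, while the other chooses near-optimal random controls from the variational representation, truncates via the stopping time as in Proposition \ref{prop:variational}, and uses Remark \ref{rmk:Xvrate} and Lemma \ref{lem:distribution}. The only discrepancy is terminological: what you call the ``Laplace upper bound'' (bounding the controlled variational infimum from above by plugging in a fixed control) in fact yields $\liminf_{\ve\to0}\ve\log\E[\cdot]\geq -\inf\{I_T^\phi+f\}$, i.e.\ the Laplace-principle \emph{lower} bound \eqref{eq:LPlower}, and your ``lower bound'' is the upper bound \eqref{eq:LPupper}; this is a labeling swap only and the arguments themselves are correct.
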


\begin{proof}
Fix a bounded Lipschitz continuous function $f:C([-\delay,T],\R^d)\to\R$ and a bounded subset $\K$ of $\C$. Let $\lip_f>0$ denote the Lipschitz constant for $f$ and let $M>0$ be sufficiently large so that 
	\be\label{eq:fboundM}|f(\x)|\leq M,\qquad\x\in C([-\delay,T],\R^d)\ee 
and 
	\be\label{eq:phiboundM}\norm{\phi}_{[-\delay,0]}\leq M,\qquad\phi\in\K.\ee

We first prove that
\begin{align}\label{eq:LPupper}
	\limsup_{\ve\to0}\sup_{\phi\in\K}\lb\ve\log \E\lsb\exp\lb-\frac{f\lb\X^{\ve,\phi}\rb}{\ve}\rb\rsb+\inf_{\x\in C([-\delay,T],\R^d)}\left\{I_T^\phi(\x)+f(\x)\right\}\rb\leq0.
\end{align}
Let $\alpha>0$ be arbitrary and set $N=2M(2M+\alpha)/\alpha$. Let $\{(\ve_n,\phi_n)\}_{n=1}^\infty$ be a sequence in $(0,\infty)\times\K$ such that $\ve_n\to0$ as $n\to\infty$. By the variational representation \eqref{eq:efXepsrep}, for each $n\geq1$ we can choose $v^n\in\mathcal{L}^2([0,T],\R^m)$ such that
\begin{align}\label{eq:vninf}
	&\ve_n\log\E\lsb\exp\lb-\frac{f\lb\X^{\ve_n,\phi_n}\rb}{\ve_n}\rb\rsb\leq-\E\lsb\frac{1}{2}\int_0^T|v^n(s)|^2ds+f(\X^{v^n,\ve_n,\phi_n})\rsb+\alpha.
\end{align}
Rearranging \eqref{eq:vninf} and using the bound \eqref{eq:fboundM} yields 
	\be\label{eq:Evn2bound}\E\left[\frac{1}{2}\int_0^T|v^n(s)|^2ds\right]\leq2M+\alpha,\qquad n\geq1.\ee
Define the $\{\F_t^W\}$-stopping time
	\be\label{eq:rhoN}\gamma^{n,N}=\inf\lcb t\in[0,T]:\frac{1}{2}\int_0^t|v^n(s)|^2ds\geq N\rcb,\ee
and define the process $v^{n,N}=\{v^{n,N}(t),t\in[0,T]\}$ in $\L_N^2([0,T],\R^m)$ by
	\be\label{eq:vnN}v^{n,N}(t)=v^n(t)1_{\{t\in[0,\gamma^{n,N}]\}}\quad\text{for all }t\in[0,T].\ee
By Chebyshev's inequality, \eqref{eq:Evn2bound} and our choice of $N$,
	\be\label{eq:PrhoN}\P\lb v^{n,N}\neq v^n\rb=\P\lb\frac{1}{2}\int_0^T|v^n(s)|^2ds>N\rb\leq\frac{\alpha}{2M}.\ee
Then by \eqref{eq:vninf}, the fact that $|v^{n,N}|\leq|v^n|$ pointwise, \eqref{eq:PrhoN} and \eqref{eq:fboundM},
\begin{align}\label{eq:vnNinf}
	&\ve_n\log\E\lsb\exp\lb-\frac{f\lb\X^{\ve_n,\phi_n}\rb}{\ve_n}\rb\rsb\leq-\E\lsb\frac{1}{2}\int_0^T|v^{n,N}(s)|^2ds+f(\X^{v^{n,N},\ve_n,\phi_n})\rsb+2\alpha.
\end{align}
For each $n\geq1$, let $\X^{v^{n,N},\phi_n}=\{\X^{v^{n,N},\phi_n}(t),t\in[-\delay,T]\}$ denote the process defined as in Lemma \ref{lem:Xv}, but with $v^{n,N}$ in place of $v$. By \eqref{eq:vnNinf}, Remark \ref{rmk:Xvrate} and the Lipschitz continuity of $f$, we obtain, for each $n\geq1$,
\begin{align*}
	\ve_n\log \E\left[\exp\lb-\frac{f\lb\X^{\ve_n,\phi_n}\rb}{\ve_n}\rb\right]&\leq-\E\left[I_T^{\phi_n}(\X^{v^{n,N},\phi_n})+f(\X^{v^{n,N},\phi_n})\right]\\
	&\qquad+\E\lsb|f(\X^{v^{n,N},\ve_n,\phi_n})-f(\X^{v^{n,N},\phi_n})|\rsb+2\alpha\\
	&\leq-\inf_{ \x\in C([-\delay,T],\R^d)}\left\{I_T^{\phi_n}(\x)+f(\x)\right\}\\
	&\qquad+\lip_f\E\lsb\norm{\X^{v^{n,N},\ve_n,\phi_n}-\X^{v^{n,N},\phi_n}}_{[-\delay,T]}\rsb+2\alpha.
\end{align*}
Rearranging and letting $n\to\infty$, it follows from Lemma \ref{lem:distribution} that
	$$\limsup_{n\to\infty}\lb\ve_n\log \E\left[\exp\lb-\frac{f\lb\X^{\ve_n,\phi}\rb}{\ve_n}\rb\right]+\inf_{ \x\in C([-\delay,T],\R^d)}\left\{I_T^{\phi_n}(\x)+f(\x)\right\}\rb\leq2\alpha.$$ 
Since this holds for every sequence $\{(\ve_n,\phi_n)\}_{n=1}^\infty$ in $(0,\infty)\times\K$ satisfying $\ve_n\to0$ as $n\to\infty$ and $\alpha>0$ was arbitrary, \eqref{eq:LPupper} holds.

Next we prove that
\begin{align}\label{eq:LPlower}
	\liminf_{\ve\to0}\inf_{\phi\in\K}\lb\ve\log E\left[\exp\lb-\frac{f(\X^{\ve,\phi})}{\ve}\rb\right]+\inf_{\x\in C([-\delay,T],\R^d)}\left\{I_T^\phi(\x)+f(\x)\right\}\rb\geq0.
\end{align}
Again, let $\alpha>0$ be arbitrary. For each $\phi\in\K$, let $\tilde{x}^\phi\in C([-\delay,T],\R^d)$ be such that
	\be\label{eq:ITphitildexphialpha} I_T^\phi(\tilde{x}^\phi)+f(\tilde{x}^\phi)\leq\inf_{x\in C([-\delay,T],\R^d)}\lcb I_T^\phi(x)+f(x)\rcb+\alpha.\ee
(Note that $\tilde{x}^\phi$ is distinct from $x^\phi$, which denotes the solution of the DDE with initial condition $\phi$.) Since $I_T^\phi(x^\phi|_{[-\delay,T]})=0$ (see Remark \ref{rmk:ratexphi}) and $f$ is bounded by $M>0$ (see \eqref{eq:fboundM} above), it follows from \eqref{eq:ITphitildexphialpha} that
\begin{align}\label{eq:ITphitildexphi}
		I_T^\phi(\tilde{x}^\phi)&\leq f(x^\phi|_{[-\delay,T]})-f(\tilde{x}^\phi)+\alpha\leq 2M+\alpha.
\end{align}
By \eqref{eq:rate} and \eqref{eq:ITphitildexphi}, we can choose $u^\phi\in U_T(\tilde{x}^\phi)$ such that
	\be\label{eq:uphiITphitildex}\frac{1}{2}\int_0^T|u^\phi(s)|^2ds\leq I_T^\phi(\tilde{x}^\phi)+\alpha\leq 2(M+\alpha).\ee
By the variational representation \eqref{eq:efXepsrep}, the fact that $u^\phi\in \L_N^2([0,T],\R^m)$ with $N=2(M+\alpha)$, and \eqref{eq:uphiITphitildex}, we see that
\begin{align}\label{eq:varLPlower}
	\ve\log\E\left[\exp\lb-\frac{f\lb\X^{\ve,\phi}\rb}{\ve}\rb\right]&=-\inf_{N>0}\inf_{v\in\L_N^2([0,T],\R^m)}\E\left[\frac{1}{2}\int_0^T|v(s)|^2ds+f\lb\X^{\ve,v,\phi}\rb\right]\\ \notag
	&\geq -\E\left[\frac{1}{2}\int_0^T|u^\phi(s)|^2ds+f(\X^{\ve,u^\phi,\phi})\right]\\ \notag
	&\geq -\lcb I_T^{\phi}(\tilde{x}^\phi)+\E\lsb f(\X^{\ve,u^\phi,\phi})\rsb\rcb-\alpha,
\end{align}
where a.s.\ $\X^{\ve,u^\phi,\phi}$ satisfies $\X_0^{\ve,u^\phi,\phi}=\phi$ and \eqref{eq:Xvve} for all $t\in[0,T]$. It follows from \eqref{eq:ITphitildexphialpha} and \eqref{eq:varLPlower} that
\begin{align}\label{eq:varLPlower1}
	&\ve\log\E\left[\exp\lb-\frac{f\lb\X^{\ve,\phi}\rb}{\ve}\rb\right]+\inf_{x\in C([-\delay,T],\R^d)}\lcb I_T^\phi(x)+f(x)\rcb\\ \notag
	&\qquad\geq\ve\log E\lsb\exp\lb-\frac{f(\X^{\ve,\phi})}{\ve}\rb\rsb+\lcb I_T^{\phi}(\tilde{x}^\phi)+f(\tilde{x}^\phi)\rcb-\alpha\\ \notag
	&\qquad\geq -E\lsb f(X^{\ve,u^\phi,\phi})-f(\tilde{x}^\phi)\rsb-2\alpha\\ \notag
	&\qquad\geq -\kappa_fE\lsb\norm{X^{\ve,u^\phi,\phi}-\tilde{x}^\phi}_{[-\delay,T]}\rsb-2\alpha.
\end{align}
By Remark \ref{rmk:Xvxv} and Lemma \ref{lem:distribution}, with $N=2(M+\alpha)$ and $v^{\ve,\phi}=u^\phi$ for each $\ve>0$ and $\phi\in\K$, we see that
	\be\label{eq:distributionuphi}\lim_{\ve\to0}\sup_{\phi\in\K}\E\lsb\norm{\X^{\ve,u^\phi,\phi}-\tilde\x^\phi}_{[-\delay,T]}^2\rsb=0.\ee
Taking infimums over $\phi$ in $\K$ and letting $\ve\to0$ in \eqref{eq:varLPlower1}, it follows from \eqref{eq:distributionuphi} that
	\be\liminf_{\ve\to0}\inf_{\phi\in\K}\lb\ve\log E\lsb\exp\lb-\frac{f(X^{\ve,\phi})}{\ve}\rb\rsb+\inf_{x\in C([-\delay,T],\R^d)}\lcb I_T^\phi(x)+ f(x)\rcb\rb\geq-2\alpha.\ee
Since $\alpha>0$ was arbitrary, this proves \eqref{eq:LPlower}. The theorem now follows from the uniform upper and lower bounds \eqref{eq:LPupper} and \eqref{eq:LPlower}. 
\end{proof}

\subsection{Proof of the uniform LDP}\label{sec:LDPproof}

The proof of Theorem \ref{thm:uniformLDP} uses the uniform Laplace principle over bounded sets proved in Theorem \ref{thm:uniformLP} and follows a similar outline to the proof of Theorem 1.2.3 in \cite{Dupuis1997}, which establishes the equivalence between the LDP and the Laplace principle. However, the proof contains some nontrivial differences that arise because we prove the LDP holds uniformly over bounded sets.

\begin{proof}[Proof of Theorem \ref{thm:uniformLDP}]
Fix a bounded subset $\K$ in $\C$ and $T>0$. We first prove part 1. Let $F$ be a closed subset of $C([-\delay,T],\R^d)$. Define the lower semicontinuous function $f:C([-\delay,T],\R^d)\to[0,\infty]$ by
	\be\label{eq:fAAc}f( x)=\begin{cases}0&\text{if } x\in F\\ \infty&\text{if } x\in F^c.\end{cases}\ee
For $j\geq1$ define
	\be\label{eq:fj}f_j( x)=j(d_T(x,F)\wedge1)\quad\text{for all }\x\in C([-\delay,T],\R^d).\ee
Then $f_j$ is bounded and Lipschitz continuous for each $j\geq1$, and $f_j$ converges to $f$ pointwise from below as $j\to\infty$. Along with \eqref{eq:fAAc}, this implies that for all $\phi\in\K$ and each $j\geq1$,
\begin{align*}
	\ve\log\P\lb\X^\ve\in F\rb&=\ve\log\E\lsb\exp\lb-\frac{f(\X^{\ve,\phi})}{\ve}\rb\rsb\leq\ve\log\E\lsb\exp\lb-\frac{f_j(\X^{\ve,\phi})}{\ve}\rb\rsb.
\end{align*}
Thus, by Theorem \ref{thm:uniformLP}, for each $j\geq1$,
\begin{align}\label{eq:XveA}
	\limsup_{\ve\to0}\sup_{\phi\in\K}\ve\log\P\lb X^{\ve}\in F\rb&\leq-\inf_{\phi\in\K}\inf_{x\in C([-\delay,T],\R^d)}\lcb I_T^\phi(x)+f_j(x)\rcb.
\end{align}
We are left to show that
	\be\label{eq:liminfjinfphiIF}\liminf_{j\to\infty}\inf_{\phi\in\K}\inf_{x\in C([-\delay,T],\R^d)}\lcb I_T^\phi(x)+f_j(x)\rcb\geq\lim_{\eta\to0}\inf_{\phi\in\K} I_T^\phi(F^\eta).\ee
If $\lim_{\eta\to0}\inf_{\phi\in\K}I_T^\phi(F^\eta)=0$, then \eqref{eq:liminfjinfphiIF} automatically holds since $I_T^\phi$ and $f_j$ are nonnegative. We assume that $\lim_{\eta\to0}\inf_{\phi\in\K}I_T^\phi(F^\eta)>0$. Since $f_j$ is zero on $F$,
	$$\inf_{x\in C([-\delay,T],\R^d)}\lcb I_T^\phi(x)+f_j(x)\rcb=\min\lb I_T^\phi(F),\inf_{x\in F^c}\lcb I_T^\phi(x)+f_j(x)\rcb\rb.$$
It suffices to show that
	\be\label{eq:liminfj}\liminf_{j\to\infty}\inf_{\phi\in\K}\inf_{x\in F^c}\lcb I_T^\phi(x)+f_j(x)\rcb\geq\lim_{\eta\to0}\inf_{\phi\in\K}I(F^\eta).\ee

First, consider the case that $\lim_{\eta\to0}\inf_{\phi\in\K}I_T^\phi(F^\eta)<\infty$. For a proof by contradiction, suppose there exists $\eta>0$, $\alpha>0$, a subsequence $\{j_k\}_{k=1}^\infty$ and a sequence $\{\x^k\}_{k=1}^\infty$ in $(F^\eta)^c$ such that for each $k\geq1$, $\x_0^k\in\K$ and
	\be\label{eq:IxkFeta}I_T(x^k)+f_{j_k}(x^k)\leq\inf_{\phi\in\K}I_T^\phi(F^\eta)-\alpha.\ee
By the uniform bound in the last display and the definition of $f_j$ in \eqref{eq:fj}, we have $d_T(x^k,F)\to0$ as $k\to\infty$. Thus, due to the definition of $F^\eta$ in \eqref{eq:Feta}, $\x^k\in F^\eta$ for all $k$ sufficiently large. However, this implies $I_T(\x^k)\geq\inf_{\phi\in\K}I_T^\phi(F^\eta)$, which contradicts \eqref{eq:IxkFeta} (since $f_j$ is nonnegative). With this contradiction thus obtained, it follows that \eqref{eq:liminfj} holds when $\lim_{\eta\to0}\inf_{\phi\in\K}I_T^\phi(F^\eta)<\infty$.

Next, consider the case that $\lim_{\eta\to0}\inf_{\phi\in\K}I_T^\phi(F^\eta)=\infty$. For a proof by contradiction, suppose there exists $\eta>0$, $M>0$, a subsequence $\{j_k\}_{k=1}^\infty$ and a sequence $\{\x^k\}_{k=1}^\infty$ in $(F^\eta)^c$ such that for each $k\geq1$, $\x_0^k\in\K$ and
	\be\label{eq:IxkM}I_T(x^k)+f_{j_k}(x^k)\leq M.\ee
By the uniform bound in the last display and the definition of $f_j$ in \eqref{eq:fj}, we have $d_T(x^k,F)\to0$ as $k\to\infty$. Thus, due to the definition of $F^\eta$ in \eqref{eq:Feta}, $\x^k\in F^\eta$ for all $k$ sufficiently large. However, this implies $I_T(\x^k)\geq\inf_{\phi\in\K}I_T^\phi(F^\eta)=\infty$, which contradicts \eqref{eq:IxkM}. With this contradiction thus obtained, it follows that \eqref{eq:liminfj} holds when $\lim_{\eta\to0}\inf_{\phi\in\K}I_T^\phi(F^\eta)=\infty$. This proves part 1 of the theorem.

We now prove part 2. Let $G$ be an open subset of $C([-\delay,T],\R^d)$. If $\lim_{\eta\to0}\sup_{\phi\in\K}I_T^\phi(G_\eta)=\infty$, we are done. We assume $\lim_{\eta\to0}\sup_{\phi\in\K}I_T^\phi(G_\eta)<\infty$. Let $\alpha>0$. Choose $\eta^\dagger>0$ such that
	\be\label{eq:Getaalpha}\sup_{\phi\in\K}I_T^\phi(G_{\eta^\dagger})\leq\lim_{\eta\to0}\sup_{\phi\in\K}I_T^\phi(G_\eta)+\alpha.\ee
Let $M>\sup_{\phi\in\K}I_T^\phi(G_{\eta^\dagger})$ and define
	$$f(x)=M\lb\frac{d_T\lb G_{\eta^\dagger},x\rb}{\eta^\dagger}\wedge1\rb.$$
Then $f$ is nonnegative, bounded above by $M$, Lipschitz continuous, and satisfies $f(x)=0$ for all $x\in G_{\eta^\dagger}$ and $f(x)=M$ for all $x\in C([-\delay,T],\R^d)$ satisfying $d_T(G_{\eta^\dagger},x)\geq\eta^\dagger$. Thus,
\begin{align*}
	\E\lsb\exp\lb-\frac{f(\X^{\ve,\phi})}{\ve}\rb\rsb&\leq e^{-M/\ve}\P(d_T(G_{\eta^\dagger},\X^{\ve,\phi})\geq\eta^\dagger)+\P(d_T(G_{\eta^\dagger},\X^\ve)<\eta^\dagger)\\
	&\leq e^{-M/\ve}+\P(d_T(G_{\eta^\dagger},\X^{\ve,\phi})<\eta^\dagger).
\end{align*}
Therefore, by \eqref{eq:loglimit1}, the last display, Theorem \ref{thm:uniformLP} and the fact that $f(x)=0$ for all $x\in G_{\eta^\dagger}$,
\begin{align*}
	\max\lb\liminf_{\ve\to0}\inf_{\phi\in\K}\ve\log\P(d_T(G_{\eta^\dagger},\X^{\ve,\phi})<\eta^\dagger),-M\rb	&\geq\liminf_{\ve\to0}\inf_{\phi\in\K}\ve\log\E\lsb\exp\lb-\frac{f(\X^{\ve,\phi})}{\ve}\rb\rsb\\
	&\geq-\sup_{\phi\in\K}\inf_{x\in C([-\delay,T],\R^d)}\{I_T^\phi(\x)+f(\x)\}\\
	&\geq-\sup_{\phi\in\K}\inf_{x\in G_{\eta^\dagger}}\lcb I_T^\phi(x)+f(x)\rcb\\
	&\geq-\sup_{\phi\in\K}I_T^\phi(G_{\eta^\dagger}).
\end{align*}
Since the ball $\{x\in C([-\delay,T],\R^d):d_T(G_{\eta^\dagger},x)<\eta^\dagger\}$ is contained in $G$ and $M>\sup_{\phi\in\K}I_T^\phi(G_{\eta^\dagger})$, it follows from the last display and \eqref{eq:Getaalpha} that
\begin{align*}
	\liminf_{\ve\to0}\inf_{\phi\in\K}\ve\log\P(\X^{\ve,\phi}\in G)&\geq\liminf_{\ve\to0}\inf_{\phi\in\K}\ve\log\P(d_T(G_{\eta^\dagger},\X^{\ve,\phi})<\eta)\\
	&\geq-\sup_{\phi\in\K}I_T^\phi(G_{\eta^\dagger})\\
	&\geq-\lim_{\eta\to0}\sup_{\phi\in\K}I_T^\phi(G_\eta)-\alpha
\end{align*}
Since $\alpha>0$ was arbitrary, this proves part 2 of the theorem.
\end{proof}

\section{Exit time asymptotics}\label{sec:proofexit}

In this section we prove Theorem \ref{thm:exit}. Throughout this section we assume $m=d$, and $\f$ and $\g$ satisfy Assumptions \ref{ass:lip} and \ref{ass:non-degenerate}. Recall that $\overline{V}$ and $\underline{V}$ are finite by Remark \ref{rmk:Vfinite}. Let $\lip_1\geq1$ be such that \eqref{eq:lip} holds and $c>0$ be the constant in Assumption \ref{ass:non-degenerate}. According to Remark \ref{rmk:non-degenerate}, there exists $M_a\geq1$ such that
	\be\label{eq:Mabound}|(a(\phi))^{-1}|\leq M_a,\qquad\phi\in\C.\ee
Let $\xsops$ be a periodic solution of \eqref{eq:dde} with period $\period>0$ and let $\orbit=\{\xsops_t,t\in[0,\period)\}$ denote its orbit in $\C$. We assume that $\xsops$ is stable (see Definition \ref{def:stable}). Let $\domain$ be a bounded domain in $\C$ that contains $\orbit$. We assume there exists $\eta_0>0$ such that $\ball(\domain,\eta_0)$ is uniformly attracted to $\orbit$ (see Definition \ref{def:attracted}). Given $\mu>0$, we let $\overline{\ball(\orbit,\mu)}=\ball(\orbit,\mu)\cup\sphere(\orbit,\mu)$ denote the closure of $\ball(\orbit,\mu)$ in $\C$.

\subsection{Preliminary estimates}

In preparation for proving Theorem \ref{thm:exit} we first establish some useful lemmas. 

\begin{lemma}\label{lem:Nmu}
Given $\alpha>0$, there are constants $\mu,T_1>0$ such that for each $\phi\in\overline{\ball(\orbit,\mu)}$ and $\psi\in\orbit$, there exist $T\leq T_1$ and $\x\in C([-\delay,T],\R^d)$ satisfying $\x_0=\phi$, $\x_T=\psi$ and $I_T(\x)\leq\alpha$.
\end{lemma}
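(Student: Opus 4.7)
The plan is to construct, for each $\phi\in\overline{\ball(\orbit,\mu)}$ and $\psi\in\orbit$, an explicit absolutely continuous path that is a small, smooth perturbation of a zero-rate reference path that traces out the orbit and hits $\psi$ at time $T$. Using Lemma \ref{lem:rateAC} (which is available since $m=d$ and Assumption \ref{ass:non-degenerate} is in force), it will suffice to bound $J_T(x)$.

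First, since $\orbit=\{\xsops_t:t\in[0,\period)\}$ is the continuous image of a compact set and hence compact, for each $\phi\in\overline{\ball(\orbit,\mu)}$ there exists $\phi_0\in\orbit$ with $\norm{\phi-\phi_0}_{[-\delay,0]}\leq\mu$; write $\phi_0=\xsops_{s_0}$ and $\psi=\xsops_{t_\psi}$ with $s_0,t_\psi\in[0,\period)$. By adding an appropriate nonnegative integer multiple of $\period$ to $(t_\psi-s_0)\bmod\period$, I would choose $T$ in the fixed interval $[2\delay,T_1]$ (where $T_1$ depends only on $\delay$ and $\period$) so that $\xsops(s_0+T)=\psi(0)$ and $\xsops_{s_0+T}=\psi$. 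The reference path $y(t):=x^{\phi_0}(t)=\xsops(s_0+t)$ on $[-\delay,T]$ is a solution of the DDE, so $y_0=\phi_0$, $y_T=\psi$, and $\dot y(t)=\f(y_t)$ by Remark \ref{rmk:xdiff}.

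Next, I would introduce a small continuous error function $e\colon[-\delay,T]\to\R^d$ defined by $e(t)=\phi(t)-\phi_0(t)$ on $[-\delay,0]$, $e(t)=(1-t/(T-\delay))(\phi(0)-\phi_0(0))$ on $[0,T-\delay]$, and $e(t)=0$ on $[T-\delay,T]$. This is continuous, satisfies $\norm{e}_{[-\delay,T]}\leq\mu$, and $|\dot e(t)|\leq\mu/(T-\delay)\leq\mu/\delay$ a.e.\ on $[0,T]$. Set $\x:=y+e$. Then $\x$ is absolutely continuous, $\x_0=\phi$, $\x_T=y_T=\psi$, and $\dot\x(s)=\f(y_s)+\dot e(s)$ on $(0,T]$, so
\begin{equation*}
\f(\x_s)-\dot\x(s)=\bigl(\f(\x_s)-\f(y_s)\bigr)-\dot e(s).
\end{equation*}
Since $\norm{\x_s-y_s}_{[-\delay,0]}=\norm{e_s}_{[-\delay,0]}\leq\mu$, Assumption \ref{ass:lip} gives $|\f(\x_s)-\f(y_s)|\leq\sqrt{\lip_1}\,\mu$, and the uniform ellipticity bound $|a^{-1}|\leq M_a$ together with $(r+r')^2\leq 2r^2+2(r')^2$ yield
\begin{equation*}
\Lambda(\x_s,\dot\x(s))\leq M_a\bigl(|\f(\x_s)-\f(y_s)|^2+|\dot e(s)|^2\bigr)\leq M_a\mu^2\bigl(\lip_1+1/\delay^2\bigr).
\end{equation*}
Integrating over $[0,T]$ and using $T\leq T_1$ gives $I_T(\x)\leq J_T(\x)\leq T_1 M_a\mu^2(\lip_1+1/\delay^2)$, which is made $\leq\alpha$ by choosing $\mu$ sufficiently small depending on $\alpha,\lip_1,M_a,\delay,T_1$.

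The main obstacle is engineering the choice of $T$ so that it lies in a bounded interval of the form $[2\delay,T_1]$ uniformly over all pairs $(\phi,\psi)$; this requires some care when the period $\period$ is smaller than $\delay$, since then one must add several copies of $\period$ to the naive choice $(t_\psi-s_0)\bmod\period$ to guarantee $T-\delay$ is bounded below (so that $|\dot e|$ is uniformly controlled) while still keeping $T$ bounded above. Once this bookkeeping is in place, the perturbative estimate above is routine.
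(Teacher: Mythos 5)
Your proposal is correct and follows essentially the same strategy as the paper's proof: pick a nearby orbit point $\phi_0 = \xsops_{s_0}$, ride the periodic solution, and superimpose a linear correction that absorbs the initial mismatch $\phi - \phi_0$ and vanishes at least $\delay$ units before time $T$, so that $\x_T = \psi$ exactly. The only cosmetic difference is the interpolation window: the paper linearly collapses the initial-point error over a fixed $1$-unit interval $[0,1]$ (taking $T_1 = 1 + \delay + \period$ and getting the constant $M_a T_1(\lip_1 + 1)$), whereas you spread it over $[0, T-\delay]$ with $T\geq 2\delay$, yielding $|\dot e|\leq \mu/\delay$ and the constant $M_a T_1(\lip_1 + 1/\delay^2)$. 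Both yield the required $\mu = O(\sqrt{\alpha})$; the bookkeeping concern you flag about selecting $T\in[2\delay, T_1]$ uniformly is legitimate but resolved exactly as you indicate (take $T = t_\psi - s_0 + k\period$ for a suitable integer $k$), and the paper's fixed $1$-unit window simply sidesteps that issue.
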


\begin{proof}
Fix $\alpha>0$. Set $T_1=1+\delay+\period$ and
	\be\label{eq:musqrt}\mu=\sqrt{\frac{\alpha}{M_aT_1(\lip_1+1)}}.\ee
Let $\phi\in\overline{\ball(\orbit,\mu)}$ and $\psi\in\orbit$. Then there exist $t^\ast\in[0,\period)$ and $t^\dagger\in[t^\ast+1+\delay,t^\ast+1+\delay+\period)$ such that $d_0(\phi,\xsops_{t^\ast})\leq\mu$ and $\xsops_{t^\dagger}=\psi$. Define $T=t^\dagger-t^\ast\leq T_1$ and $\x\in C([-\delay,T],\R^d)$ by 
	\be\label{eq:xballlevel}
	\x(t)=
	\begin{cases}
		\phi(t)&\text{for all }t\in[-\delay,0],\\
		(1-t)\phi(0)+t\xsops(t^\ast)+\xsops(t^\ast+t)-\xsops(t^\ast)&\text{for all }t\in(0,1],\\
		\xsops(t^\ast+t)&\text{for all }t\in(1,T].
	\end{cases}
	\ee
By \eqref{eq:xballlevel} and Remark \ref{rmk:xdiff}, $\x_0=\phi$, $\x_T=\xsops_{t^\dagger}=\psi$, $x$ is absolutely continuous on $[0,T]$ and 
	$$x(t)=x(0)+\int_0^t\dotx(s)ds,\qquad t\in[0,T],$$ 
where $\dotx\in L^1([0,T],\R^d)$ is defined by
	\be\label{eq:dxdt0Tphipsi}\dotx(t)=
	\begin{cases}
		\xsops(t^\ast)-\phi(0)+\f(\xsops_{t^\ast+t})&\text{for all }t\in[0,1],\\
		\f(\xsops_{t^\ast+t})&\text{for all }t\in(1,T].
	\end{cases}\ee
By Lemma \ref{lem:rateAC}, the Lipschitz continuity of $\f$ (Assumption \ref{ass:lip}), \eqref{eq:dxdt0Tphipsi}, the fact that $T\leq T_1$, \eqref{eq:xballlevel} and the fact that $d(\phi,\xsops_{t^\ast})\leq\mu$,
\begin{align*}
	I_T(\x)&\leq\frac{1}{2}\int_0^T|(a(\x_s))^{-1}||\f(\x_s)-\f(\xsops_{t^\ast+s})+\f(\xsops_{t^\ast+s})-\dot{\x}(s)|^2ds\\
	&\leq\frac{M_a}{2}\int_0^T\left(2\lip_1\norm{\x_s-\xsops_{t^\ast+s}}_{[-\delay,0]}^2+2|\xsops(t^\ast)-\phi(0)|^2\right)ds\\
	&\leq M_aT_1(\lip_1+1)\sup_{s\in[-\delay,T]}|x(s)-x(t^\ast+s)|^2\\
	&\leq M_a{T_1}(\lip_1+1)\mu^2.
\end{align*}
The lemma then follows from our choice of $\mu$ in \eqref{eq:musqrt}.
\end{proof}

\begin{lemma}\label{lem:NmuNh}
Given $\alpha>0$, there are constants $\mu,h,T_2>0$ such that for each $\phi\in\overline{\ball(\orbit,\mu)}$ there exist $T\leq T_2$ and $\x\in C([-\delay,T],\R^d)$ such that $x_0=\phi$, $\x_T\not\in\ball(\domain,h)$ and $I_T(\x)<\overline{V}+2\alpha$.
\end{lemma}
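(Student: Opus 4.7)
The plan is to build the required path $\x$ by concatenating two sub-paths, invoking Lemma \ref{lem:concat} to combine their rate function values. The first sub-path will transport $\phi$ from the neighborhood of the orbit to a point $\y_0\in\orbit$ at cost at most $\alpha$, using Lemma \ref{lem:Nmu}. The second sub-path will be a near-optimal continuation from $\y_0\in\orbit$ to a terminal point $\psi^\ast$ lying strictly outside $\overline{\domain}$, at cost strictly less than $\overline{V}+\alpha$, built directly from the definitions of $\overline{V}$ and the quasipotential $V$.

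First, by the definition of $\overline{V}$ in \eqref{eq:Vbar}, I would fix $\psi^\ast\notin\overline{\domain}$ such that $V(\psi^\ast)<\overline{V}+\alpha$; this is possible because $\overline{V}$ is finite (Remark \ref{rmk:Vfinite}). Since $\overline{\domain}$ is closed in $\C$, the distance $r=d_0(\domain,\psi^\ast)$ is strictly positive; I set $h=r/2$, which ensures $\psi^\ast\notin\ball(\domain,h)$. By the definition of $V(\psi^\ast)$ in \eqref{eq:quasipotential}, there exist $T^\ast>0$ and $\y\in C([-\delay,T^\ast],\R^d)$ with $\y_0\in\orbit$ and $\y_{T^\ast}=\psi^\ast$ satisfying $I_{T^\ast}(\y)<\overline{V}+\alpha$. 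Note that $h$, $\y$, $T^\ast$, and $\psi^\ast$ depend only on $\alpha$, not on $\phi$.

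Second, I would apply Lemma \ref{lem:Nmu} with the given $\alpha>0$ to obtain constants $\mu, T_1>0$. For any $\phi\in\overline{\ball(\orbit,\mu)}$, applying that lemma with the particular orbit point $\psi=\y_0$ yields $T'\leq T_1$ and $w\in C([-\delay,T'],\R^d)$ with $w_0=\phi$, $w_{T'}=\y_0$, and $I_{T'}(w)\leq\alpha$. Setting $T=T'+T^\ast\leq T_1+T^\ast=:T_2$, I define the concatenated path $\x\in C([-\delay,T],\R^d)$ by $\x(t)=w(t)$ for $t\in[-\delay,T']$ and $\x(t)=\y(t-T')$ for $t\in(T',T]$. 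Continuity at $t=T'$ is automatic since $w(T')=\y_0(0)=\y(0)$, and by construction $\x_0=\phi$ and $\x_T=\y_{T^\ast}=\psi^\ast\notin\ball(\domain,h)$.

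Finally, Lemma \ref{lem:concat} applied with $S=T'$ gives $I_T(\x)=I_{T'}(w)+I_{T^\ast}(\y)\leq\alpha+I_{T^\ast}(\y)<\alpha+(\overline{V}+\alpha)=\overline{V}+2\alpha$, as required. The argument is essentially routine bookkeeping once the ingredients are assembled; I do not foresee a substantive obstacle. The only care needed is to split the allowance $2\alpha$ so that both the rate from Lemma \ref{lem:Nmu} and the slack in the near-infimum choice of $\y$ fall strictly within the prescribed budget, and to pick $h$ small enough (relative to $d_0(\domain,\psi^\ast)$) that the terminal point of $\x$ remains outside the enlarged open ball $\ball(\domain,h)$.
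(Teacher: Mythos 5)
Your proposal is correct and follows essentially the same route as the paper's proof: both choose a near-optimal escape path from the orbit to a point outside $\overline{\domain}$ using the definitions of $\overline{V}$ and $V$, transport $\phi$ to the starting orbit point of that escape path at cost $\leq\alpha$ via Lemma \ref{lem:Nmu}, and concatenate using Lemma \ref{lem:concat} to bound the total rate by $\overline{V}+2\alpha$. The only cosmetic difference is that you unwind the nested infimum in $\overline{V}$ in two explicit steps (first fixing $\psi^\ast$, then a near-optimal path $\y$), whereas the paper collapses this into a single existence statement.
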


\begin{proof}
Fix $\alpha>0$ and let $\mu,T_1>0$ be as in Lemma \ref{lem:Nmu}. By the definition of $\overline{V}$ in \eqref{eq:Vbar}, there exist $\psi\in\orbit$, $T^\dagger>0$ and $\x^\dagger\in C([-\delay,T^\dagger],\R^d)$ such that $\x_0^\dagger=\psi$, $\x_{T^\dagger}^\dagger\not\in\overline{\domain}$ and $I_{T^\dagger}(\x^\dagger)<\overline{V}+\alpha$. Since $\x_{T^\dagger}^\dagger\not\in\overline{\domain}$ and $\overline{\domain}$ is closed, we can choose $h>0$ such that $\x_{T^\dagger}^\dagger\not\in\ball(\domain,h)$. Set $T_2=T_1+T^\dagger$. Let $\phi\in\overline{\ball(\orbit,\mu)}$. By Lemma \ref{lem:Nmu}, there exist $S\leq T_1$ and $\x^\ddagger\in C([-\delay,S],\R^d)$ such that $\x_0^\ddagger=\phi$, $\x_{S}^\ddagger=\psi=\x_0^\dagger$ and $I_S(\x^\ddagger)\leq\alpha$. Let $T=S+T^\dagger\leq T_2$ and define $\x\in C([-\delay,T],\R^d)$ by
	$$\x(t)=\begin{cases}\x^\ddagger(t)&t\in[-\delay,S],\\ \x^\dagger(t-S)&t\in(S,T].\end{cases}$$
Then $\x_0=\x_0^\ddagger=\phi$, $\x_T=\x_{T^\dagger}^\dagger\not\in\ball(\domain,h)$ and, by Lemma \ref{lem:concat}, $I_T(\x)=I_S(\x^\ddagger)+I_{T^\dagger}(\x^\dagger)<\overline{V}+2\alpha$.
\end{proof}

Since $\xsops$ is stable (see Definition \ref{def:stable}) and $\domain$ is an open subset of $\C$ that contains $\orbit$, we can choose $\mu_0>0$ sufficiently small such that $\ball(\orbit,\mu_0)\subset\domain^s$, where $\domain^s$ is the subset of $\domain$ defined in \eqref{eq:domain0}. For $\ve>0$, $\phi\in\domain$ and $\mu\in(0,\mu_0)$, let 
	\be\label{eq:exitmu}\sigma_\mu^{\ve,\phi}=\inf\lcb t\geq0:\X_t^{\ve,\phi}\in{\sphere(\orbit,\mu)}\cup\domain^c\rcb.\ee
	
\begin{remark}
Since $\domain$ is a bounded subset of $\C$, $\f$ is bounded on $\overline{\domain}$ and the diffusion coefficient $a=\sigma\sigma'$ is uniformly nondegenerate on $\overline{\domain}$, it can be readily deduced that $\sigma_\mu^{\ve,\phi}$ is a.s.\ finite.
\end{remark}

\begin{lemma}\label{lem:deltaDexitmu}
Given $\mu\in(0,\mu_0)$ and $\phi\in\domain^s$,
	\be\label{eq:deltaDexitmu}\lim_{\ve\to0}\P\lb\X_{\sigma_\mu^{\ve,\phi}}^{\ve,\phi}\in{\sphere(\orbit,\mu)}\rb=1.\ee
\end{lemma}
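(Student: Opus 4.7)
My plan is to split on the initial location of $\phi$. If $\phi\in\overline{\ball(\orbit,\mu)}$ the conclusion is essentially immediate: when $\phi\in\sphere(\orbit,\mu)$ we have $\sigma_\mu^{\ve,\phi}=0$ trivially, and when $\phi\in\ball(\orbit,\mu)$, since $\mu<\mu_0$ implies $\overline{\ball(\orbit,\mu)}\subset\ball(\orbit,\mu_0)\subset\domain^s$ and hence $\sphere(\orbit,\mu)\cap\domain^c=\emptyset$, continuity of sample paths forces $\X^{\ve,\phi}$ to first leave $\ball(\orbit,\mu)$ through $\sphere(\orbit,\mu)$, giving $\X_{\sigma_\mu^{\ve,\phi}}^{\ve,\phi}\in\sphere(\orbit,\mu)$ with probability one for every $\ve>0$ (using the a.s.\ finiteness of $\sigma_\mu^{\ve,\phi}$).

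The substantive case is $\phi\in\domain^s\setminus\overline{\ball(\orbit,\mu)}$, and here I would proceed in three steps. First, since $\phi\in\domain\subset\ball(\domain,\eta_0)$, the uniform attraction hypothesis (Definition \ref{def:attracted}) produces $T>0$ with $d_0(\orbit,\x_t^\phi)\leq\mu/2$ for all $t\geq T$. Second, because $\phi\in\domain^s$ means $\x_t^\phi\in\domain$ for all $t\geq 0$, the set $K=\{\x_t^\phi:t\in[0,T]\}$ is a compact subset of the open set $\domain$, so $\beta:=d_0(K,\domain^c)>0$; I set $\gamma=\min(\beta,\mu/4)$. Third, applying Remark \ref{rmk:distribution} with the bounded set $\K=\{\phi\}$, together with Chebyshev's inequality, gives
\begin{equation*}
\P\lb\norm{\X^{\ve,\phi}-\x^\phi}_{[-\delay,T]}<\gamma\rb\to 1\quad\text{as }\ve\to 0.
\end{equation*}

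On the event in the display, $\X_t^{\ve,\phi}$ stays within $\gamma\leq\beta$ of $K$ for all $t\in[0,T]$, so $\X_t^{\ve,\phi}\in\domain$ on $[0,T]$; moreover $d_0(\orbit,\X_T^{\ve,\phi})\leq d_0(\orbit,\x_T^\phi)+\gamma<\mu/2+\mu/4<\mu$, while $d_0(\orbit,\X_0^{\ve,\phi})=d_0(\orbit,\phi)>\mu$. Continuity of $t\mapsto d_0(\orbit,\X_t^{\ve,\phi})$ then produces a first $s\in(0,T)$ where this distance equals $\mu$, i.e.\ $\X_s^{\ve,\phi}\in\sphere(\orbit,\mu)$; since the path lies in $\domain$ and off $\sphere(\orbit,\mu)$ on $[0,s)$, this $s$ must equal $\sigma_\mu^{\ve,\phi}$, yielding $\X_{\sigma_\mu^{\ve,\phi}}^{\ve,\phi}\in\sphere(\orbit,\mu)$. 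Taking probabilities establishes \eqref{eq:deltaDexitmu}. I do not foresee serious obstacles; the main point is to pair the uniform attraction to $\orbit$ with the second-moment closeness estimate from Lemma \ref{lem:distribution}, with the case split serving only to handle the trivial inner-ball starting positions.
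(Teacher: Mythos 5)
Your argument is correct and is essentially the paper's: both rest on the closeness estimate of Remark \ref{rmk:distribution} applied to the singleton $\{\phi\}$, the compactness of the DDE trajectory $\{\x_t^\phi:t\in[0,T]\}$ inside the open set $\domain$, and a continuity/intermediate-value argument showing the SDDE path must cross $\sphere(\orbit,\mu)$ before it can reach $\domain^c$. The only cosmetic difference is your explicit case split on $\phi\in\overline{\ball(\orbit,\mu)}$ versus $\phi\notin\overline{\ball(\orbit,\mu)}$; the paper absorbs this by taking $T$ to be the first time $\x^\phi$ is within $\mu/2$ of $\orbit$ (so $T=0$ when $\phi$ is already close) and choosing a single margin $h<\mu/2$ playing the role of your $\gamma$, though your explicit handling of the inner-ball case is arguably cleaner since there the conclusion holds with probability one for every $\ve>0$, not merely in the $\ve\to0$ limit.
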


\begin{proof}
Fix $\mu\in(0,\mu_0)$ and $\phi\in\domain^s$. By the definition of $\domain^s$ in \eqref{eq:domain0} and the fact that $\domain$ is uniformly attracted to $\orbit$ (see Definition \ref{def:attracted}), $\x_t^\phi\in\domain$ for all $t\geq0$ and 
	$$T=\inf\{t\geq0:d_0(\orbit,\x_t^\phi)\leq\mu/2\}<\infty.$$ 
Since $t\to\x_t^\phi$ is a continuous function from $[0,T]$ to $\C$, $\x_t^\phi\in\domain$ for all $t\in[0,T]$ and $\domain$ is an open subset of $\C$, we can choose $h\in(0,\mu/2)$ sufficiently small so that $\ball(\x_t^\phi,h)\subset\domain$ for all $t\in[0,T]$. Therefore, if $\norm{\X^{\ve,\phi}-\x^\phi}_{[-\delay,T]}<h$, then $\X_t^{\ve,\phi}\in\domain$ for all $t\in[0,T]$ and $\X_T^{\ve,\phi}\in{\sphere(\orbit,\mu)}$, which, by the sample path continuity of $t\to\X_t^{\ve,\phi}$ from $[0,T]$ to $\C$, imply that $\sigma_\mu^{\ve,\phi}<T$ and $\X_{\sigma_\mu^{\ve,\phi}}^{\ve,\phi}\in{\sphere(\orbit,\mu)}$. Thus, by Chebyshev's inequality and Remark \ref{rmk:distribution}, 
\begin{align*}
	\lim_{\ve\to0}P\lb\X_{\sigma_\mu^{\ve,\phi}}^{\ve,\phi}\not\in{\sphere(\orbit,\mu)}\rb&\leq\lim_{\ve\to0}\P\lb\norm{\X^{\ve,\phi}-\x^\phi}_{[-\delay,T]}\geq h\rb=0.
\end{align*}
\end{proof}

\begin{lemma}\label{lem:IphiVetaalpha}
Given $\alpha,\eta>0$ there exists $\mu\in(0,\mu_0)$ such that if $T>0$, $x\in C([-\delay,T],\R^d)$ and $x_T\in\ball(\domain^c,\eta)$, then $\inf_{\phi\in\sphere(\orbit,\mu)}I_T^\phi(\x)\geq V_{2\eta}-\alpha$.
\end{lemma}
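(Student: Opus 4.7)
The plan is to show $V_{2\eta}\le I_T(x)+\alpha$ by perturbing $x$ into a nearby path $\tilde x$ that starts on $\orbit$ and ends in $\ball(\domain^c,2\eta)$, at cost comparable to $I_T(x)$. Once such $\tilde x$ is in hand, the chain $V_{2\eta}\le V(\tilde x_T)\le I_T^{\tilde x_0}(\tilde x)$, valid whenever $\tilde x_0\in\orbit$ and $\tilde x_T\in\ball(\domain^c,2\eta)$, closes the argument.

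First I would reduce to the case $I_T(x)\le V_{2\eta}+1$, since otherwise $I_T(x)\ge V_{2\eta}-\alpha$ is immediate using the finiteness of $V_{2\eta}$ from Remark \ref{rmk:Vfinite}. Then, writing $\phi=x_0$, I would pick $\psi^\ast\in\orbit$ with $d_0(\phi,\psi^\ast)=\mu$ and select a near-optimal $u\in U_T(x)$ satisfying $\tfrac12\int_0^T|u(s)|^2\,ds\le I_T(x)+\alpha/2$. The modified path $\tilde x\in C([-\delay,T],\R^d)$ is defined by $\tilde x_0=\psi^\ast$ together with the controlled integral equation \eqref{eq:xv} driven by the same control $u$ on $[0,T]$; Assumption \ref{ass:lip} ensures well-posedness by a standard Picard argument. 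By construction $u\in U_T(\tilde x)$, so $I_T^{\psi^\ast}(\tilde x)\le I_T(x)+\alpha/2$.

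A Gronwall estimate in the spirit of Lemma \ref{lem:xxphi}, applied this time to $\tilde x-x$ and using the common control together with the Lipschitz bound \eqref{eq:lip}, then yields
\[
\|\tilde x-x\|_{[-\delay,T]}^2\le C_1\mu^2\exp\bigl(C_2T(T+I_T(x))\bigr),
\]
for constants $C_1,C_2$ depending only on $\lip_1,\lip_2$. If this is strictly less than $\eta^2$ we get $d_0(\tilde x_T,\domain^c)\le d_0(\tilde x_T,x_T)+d_0(x_T,\domain^c)<2\eta$, placing $\tilde x_T\in\ball(\domain^c,2\eta)$; the chain $V_{2\eta}\le V(\tilde x_T)\le I_T^{\psi^\ast}(\tilde x)\le I_T(x)+\alpha/2$ then concludes.

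The principal obstacle is that $T$ is not a priori bounded, while the Gronwall factor $\exp(C_2T(T+I_T(x)))$ grows super-polynomially in $T$ and cannot be dominated by any single choice of $\mu$. To circumvent this I would exploit the stability of $\orbit$ and the uniform attraction of $\ball(\domain,\eta_0)$ to $\orbit$ in order to truncate $x$: any prefix along which $x$ stays in a small neighborhood of $\orbit$ can be excised at no cost to $I_T$, and I expect this reduces the effective time driving the argument to some $T^\ast=T^\ast(\alpha,\eta,V_{2\eta}+1,\lip_1,\lip_2)$. With $T$ so bounded, $\mu$ can be chosen small enough (e.g., $\mu<\eta(C_1\exp(C_2T^\ast(T^\ast+V_{2\eta}+1)))^{-1/2}$) to enforce the Gronwall bound above. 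Implementing this truncation rigorously, especially given that $x_0$ need only be continuous and so explicit constructions as in Lemma \ref{lem:Nmu} do not directly apply in reverse, is where the main technical work lies.
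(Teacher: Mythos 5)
Your proposal correctly identifies the chain $V_{2\eta}\le V(\tilde x_T)\le I_T^{\tilde x_0}(\tilde x)$ as the target, but the concrete construction you adopt (running a new controlled path $\tilde x$ from $\psi^\ast\in\orbit$ with the same near-optimal control $u$) leads to a genuine and, as you yourself note, unresolved obstacle: the Gronwall estimate on $\|\tilde x - x\|_{[-\delay,T]}$ carries a factor growing exponentially in $T$, while $T$ is completely unconstrained in the hypotheses. The truncation idea you sketch to bound $T$ does not work as stated. You cannot simply ``excise a prefix on which $x$ stays near $\orbit$ at no cost,'' because the hypothesis gives you $x_0\in\sphere(\orbit,\mu)$ only at time zero; there is no guarantee that at a later time $S$ (where you would like to restart) $x_S$ is again within $\mu$ of $\orbit$, and choosing $S$ where $x_S$ is near $\orbit$ requires knowing that the rate accrued on $[0,S]$ is small, which is precisely what you cannot control. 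This is a real gap, not a technicality.

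The paper's proof sidesteps the problem with a qualitatively different construction. Rather than launching a new controlled path from a point of $\orbit$ and hoping it tracks $x$, it keeps $x$ exactly unchanged on $[0,T]$ and only replaces the initial segment $x|_{[-\delay,0]}$: define $x^\dagger$ on $[-\delay,T+\delay]$ to begin as $\xsops_{t^\ast-\delay}\in\orbit$ on $[-\delay,0]$, interpolate linearly over $[0,\delay]$ from the orbit to $x_0$, and then shift a copy of $x$. The crucial point is that the time-shifted path $x^\delay(t)=x^\dagger(\delay+t)$ satisfies $x^\delay(t)=x(t)$ identically for $t\in[0,T]$; only the histories $x^\delay_s$ and $x_s$ differ, and only for $s\in[0,\delay]$. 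Therefore the rate-function integrands $\Lambda(x^\delay_s,\dot x^\delay(s))$ and $\Lambda(x_s,\dot x(s))$ agree for $s>\delay$, and the total change $|I_T(x^\delay)-I_T(x)|$ is controlled by an integral over $[0,T\wedge\delay]$ alone. That bound is independent of $T$, and the interpolation segment costs $I_\delay(x^\dagger|_{[-\delay,\delay]})\le M_a(\lip_1\delay+\delay^{-1})\mu^2$, again independent of $T$. This removes the $T$-dependence at the source rather than attempting to bound $T$ after the fact. It also quietly handles an additional point that your argument would need: because $d_0(x^\delay_T,x_T)\le 2\mu\le\eta$, the terminal slice $x^\dagger_{T+\delay}=x^\delay_T$ stays within $\ball(\domain^c,2\eta)$ without any tracking estimate over $[0,T]$. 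If you want to repair your argument, the fix is exactly this: do not run a fresh controlled path; prepend a $\delay$-long transition so that the perturbation's influence on the rate integrand is confined to the initial delay window.
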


\begin{proof}
Fix $\alpha,\eta>0$. By \eqref{eq:Veta}, \eqref{eq:Vlower} and Remark \ref{rmk:Vfinite}, we have  $V_{2\eta}\leq\underline{V}<\infty$. Set
	\be\label{eq:mudef}\beta=\min\lb\sqrt{\frac{\alpha}{2M_a(\lip_1\delay+\delay^{-1})}},\frac{2\alpha}{7M_a(2V_{2\eta}c^{-1}+\delay)},1\rb.\ee
By \eqref{eq:Mabound}, for $\phi,\psi\in\C$,
\begin{align}\label{eq:ainversecty}
	|(a(\phi))^{-1}-(a(\psi))^{-1}|&=|(a(\phi))^{-1}(a(\psi)-a(\phi))(a(\psi))^{-1}|\leq M_a^2|a(\psi)-a(\phi)|.
\end{align}
Since $a=\g\g'$, $\g$ is Lipschitz continuous (Assumption \ref{ass:lip}) and $\overline{\domain}$ is a bounded set, it follows from \eqref{eq:ainversecty} that $a^{-1}$ is Lipschitz continuous on $\overline{\domain}$. Therefore, we can choose 
	\be\label{eq:muminmu0gamma}0<\mu<\min\lb\mu_0,\frac{\beta}{2\lip_1},\frac{\eta}{2}\rb\ee 
sufficiently small such that
	\be\label{eq:ainverseunifcont}|(a(\phi))^{-1}-(a(\psi))^{-1}|\leq\beta\ee
for all $\phi,\psi\in\overline{\domain}$ satisfying $d_0(\phi,\psi)\leq 2\mu$. 

Suppose $T>0$, $x\in C([-\delay,T],\R^d)$ and $x_T\in\ball(\domain^c,\eta)$. If $\inf_{\phi\in\sphere(\orbit,\mu)}I_T^\phi(x)\geq V_{2\eta}$, we are done. Thus, we can assume that 
	\be\label{eq:ITlessVeta}\inf_{\phi\in\sphere(\orbit,\mu)}I_T^\phi(x)<V_{2\eta}<\infty.\ee 
In addition, if $x_S\in\ball(\domain^c,\eta)$ for some $S\in[0,T)$, then by Lemma \ref{lem:concat}, $\inf_{\phi\in\sphere(\orbit,\mu)} I_T(x)\geq\inf_{\phi\in\sphere(\orbit,\mu)}I_S(x|_{[-\delay,S]})$ and it suffices to show that $\inf_{\phi\in\sphere(\orbit,\mu)}I_S^\phi(x|_{[-\delay,S]})\geq V_{2\eta}-\alpha$. Therefore, without loss of generality, we can assume that $x_t\in\domain$ for all $t\in[0,T]$. By \eqref{eq:ITlessVeta} and the definition of the rate function in \eqref{eq:rate}, we must have 
	\be\label{eq:x0sphereorbitmu}x_0\in\sphere(\orbit,\mu).\ee 
Set $T^\dagger=\delay+T$, let $t^\ast\in[\delay,\delay+\period)$ be such that $d_0(\xsops_{t^\ast},x_0)=\mu$ and define $x^\dagger\in C([-\delay,T^\dagger],\R^d)$ by
	\be\label{eq:xdaggerdelayTdagger} x^\dagger(t)=\begin{cases}
		\xsops(t^\ast-\delay+t)&\text{for all }t\in[-\delay,0],\\
		\xsops(t^\ast-\delay+t)+\frac{t}{\delay}(x(0)-\xsops(t^\ast))&\text{for all }t\in(0,\delay],\\
		x(t-\delay)&\text{for all }t\in(\delay,T^\dagger].
	\end{cases}\ee
By Remark \ref{rmk:xdiff} and the periodicity of $\xsops$, $\xsops$ is continuously differentiable on $(-\delay,\infty)$ and its derivative satisfies $\frac{d\xsops(t)}{dt}=\f(\xsops_t)$ for all $t\geq0$. In addition, it follows from \eqref{eq:ITlessVeta} and Lemma \ref{lem:rateAC} that $x$ is absolutely continuous on $[0,T]$ and there exists $\dot x\in L^1([0,T],\R^d)$ such that \eqref{eq:xdotx} holds. Therefore, by \eqref{eq:xdaggerdelayTdagger}, $\x^\dagger$ is absolutely continuous on $[-\delay,T^\dagger]$ and 
	$$x^\dagger(t)=\x^\dagger(0)+\int_0^t\dot x^\dagger(s)ds,\qquad t\in[0,T^\dagger],$$
where $\dot x^\dagger\in L^1([0,T^\dagger],\R^d)$ is defined by
	\be\label{eq:dotxdaggerdelayTdagger}\dot x^\dagger(t)=\begin{cases}\f(\xsops_{t^\ast-\delay+t})+\frac{1}{\delay}(x(0)-\xsops(t^\ast))&\text{for all }t\in[0,\delay],\\
		\dot x(t-\delay)&\text{for all }t\in[\delay,T^\dagger].\end{cases}\ee
By Lemma \ref{lem:concat}, 
	\be\label{eq:xdaggerconcat}I_{T^\dagger}(x^\dagger)=I_\delay(\x^\dagger|_{[-\delay,\delay]})+I_T(x^\delay),\ee
where $x^\delay\in C([-\delay,T],\R^d)$ is defined by 
	\be\label{eq:xdelay}x^\delay(t)=x^\dagger(\delay+t),\qquad t\in[-\delay,T].\ee 
Note that by \eqref{eq:xdelay} and \eqref{eq:xdaggerdelayTdagger},
\begin{align*}
|x^\delay(t)-x(t)|&\leq|\xsops(t^\ast+t)-x(t)|+\frac{\delay+t}{\delay}|x(0)-\xsops(t^\ast)|\leq2\mu
\end{align*}
for all $t\in[-\delay,0]$, where we have used the fact that $d_0(\xsops_{t^\ast},x_0)=\mu$; and
	\be\label{eq:xdelayx}x^\delay(t)=x(t)\qquad\text{for all }t\in[0,T].\ee
Thus,
	\be\label{eq:supxdelaysxs}\sup_{s\in[-\delay,T]}|x^\delay(s)-x(s)|\leq 2\mu.\ee
By the fact that $x_T\in\ball(\domain^c,\eta)$, \eqref{eq:supxdelaysxs} and \eqref{eq:muminmu0gamma}, we have
	$$d_0(\domain^c,x_{T^\dagger}^\dagger)\leq d_0(\domain^c,x_T)+d_0(x_T,x_{T^\dagger}^\dagger)<2\eta.$$
Hence, $x_{T^\dagger}^\dagger\in\ball(\domain^c,2\eta)$. Since $x_0^\dagger=\xsops_{t^\ast-\delay}\in\orbit$ and $x_{T^\dagger}^\dagger\in\ball(\domain^c,2\eta)$, it follows from \eqref{eq:Veta} that 
	\be\label{eq:ITdaggerxdaggerVeta}I_{T^\dagger}(x^\dagger)\geq V_{2\eta}.\ee 
We claim, and prove below, that the following two inequalities hold:
\begin{itemize}
	\item[(a)] $I_\delay(x^\dagger|_{[-\delay,\delay]})<\frac{\alpha}{2}$;
	\item[(b)] $I_T(x^\delay)\leq I_T(x)+\frac{\alpha}{2}$.
\end{itemize}
Assuming the claim, then by \eqref{eq:x0sphereorbitmu}, (b), \eqref{eq:xdaggerconcat}, \eqref{eq:ITdaggerxdaggerVeta} and (a), we have
	$$\inf_{\phi\in\sphere(\orbit,\mu)}I_T^\phi(x)=I_T(x)\geq I_T(x^\delay)-\frac{\alpha}{2}\geq V_{2\eta}-\alpha.$$
We are left to prove that (a) and (b) hold. 

We first prove (a). By Lemma \ref{lem:rateAC}, \eqref{eq:Mabound}, the Lipschitz continuity of $\f$ (Assumption \ref{ass:lip}), \eqref{eq:dotxdaggerdelayTdagger}, \eqref{eq:xdaggerdelayTdagger}, the fact that $d_0(\xsops_{t^\ast},x_0)=\mu$, \eqref{eq:muminmu0gamma} and \eqref{eq:mudef}, we have
\begin{align*}
	I_\delay(x^\dagger|_{[-\delay,\delay]})&\leq\frac{1}{2}\int_0^\delay|(a(x_s^\dagger))^{-1}||\f(x_s^\dagger)-\f(\xsops_{t^\ast-\delay+s})+\f(\xsops_{t^\ast-\delay+s})-\dot x^\dagger(s)|^2ds\\
	&\leq\frac{M_a\delay}{2}\lb2\lip_1\sup_{s\in[-\delay,\delay]}|\x^\dagger(s)-\xsops(t^\ast-\delay+s)|^2+\frac{2}{\delay^2}|\x(0)-\xsops(t^\ast)|^2\rb\\
	&\leq M_a(\lip_1\delay+\delay^{-1})\mu^2\\
	&<\frac{\alpha}{2}.
\end{align*}
This proves (a). 

Next, we prove (b). It follows from \eqref{eq:supxdelaysxs}, \eqref{eq:ainverseunifcont}, the Lipschitz continuity of $\f$ and \eqref{eq:muminmu0gamma} that, for all $s\in[0,T]$,
\begin{align}\label{eq:abeta}
	|(a(x_s))^{-1}-(a(x_s^\delay))^{-1}|&\leq\beta\\ \label{eq:fbeta}
	|\f(x_s)-\f(x_s^\delay)|&\leq\beta.
\end{align}
By Lemma \ref{lem:rateAC} and \eqref{eq:xdelayx}, we have
\begin{align}\label{eq:ITxdelay}
	I_T(x^\delay)&=\frac{1}{2}\int_0^T(\f(x_s^\delay)-\dot x^\delay(s))'(a(x_s^\delay))^{-1}(\f(x_s^\delay)-\dot x^\delay(s))ds\\ \notag
	&=\frac{1}{2}\int_0^T(\f(x_s)-\dot x(s))'(a(x_s))^{-1}(\f(x_s)-\dot x(s))ds+\frac{1}{2}\int_0^{T\wedge\delay}k(s)ds\\ \notag
	&=I_T(x)+\frac{1}{2}\int_0^{T\wedge\delay}k(s)ds,
\end{align}
where, $k:[0,T]\to\R^d$ is defined, for $s\in[0,T]$, by
\begin{align*}
	k(s)&=(\f(\x_s)-\dot x(s))'(a(x_s))^{-1}(\f(\x_s^\delay)-\f(\x_s))\\
	&\qquad+(\f(\x_s)-\dot x(s))'((a(x_s^\delay)^{-1}-(a(x_s))^{-1})(\f(\x_s)-\dotx(s))\\
	&\qquad+(\f(\x_s)-\dot x(s))'((a(x_s^\delay)^{-1}-(a(x_s))^{-1})(\f(\x_s^\delay)-\f(\x_s))\\
	&\qquad+(\f(\x_s^\delay)-\f(\x_s))'(a(x_s))^{-1}(\f(x_s)-\dotx(s))\\
	&\qquad+(\f(\x_s^\delay)-\f(\x_s))'(a(x_s))^{-1}(\f(x_s^\delay)-\f(\x_s))\\
	&\qquad+(\f(x_s^\delay)-\f(x_s))'((a(x_s^\delay)^{-1}-(a(x_s))^{-1})(\f(\x_s)-\dotx(s))\\
	&\qquad+(\f(x_s^\delay)-\f(x_s))'((a(x_s^\delay)^{-1}-(a(x_s))^{-1})(\f(\x_s^\delay)-\f(\x_s)).
\end{align*}
By \eqref{eq:Mabound}, \eqref{eq:abeta}, \eqref{eq:fbeta} and \eqref{eq:mudef}, we have, for all $s\in[0,T]$,
\begin{align}\label{eq:ks}
		|k(s)|&\leq 7M_a(|\f(x_s)-\dot x(s)|^2\vee 1)\beta.
\end{align}
Note that, by Assumption \ref{ass:non-degenerate}, Lemma \ref{lem:rateAC} and \eqref{eq:x0sphereorbitmu},
\begin{align*}
	\int_0^{T\wedge\delay}(|\f(x_s)-\dot x(s)|^2\vee1)ds&\leq\int_0^{T\wedge\delay}|\f(x_s)-\dot x(s)|^2ds+\delay\\
	&\leq\frac{1}{c}\int_0^T(\f(x_s)-\dot x(s))'(a(x_s))^{-1}(\f(x_s)-\dot x(s))ds+\delay\\
	&\leq\frac{2I_T(x)}{c}+\delay.
\end{align*}
Thus, by \eqref{eq:ks}, the last display, \eqref{eq:ITlessVeta} and \eqref{eq:mudef},
\begin{align*}
	\frac{1}{2}\int_0^{T\wedge\delay}|k(s)|ds&<\frac{7}{2}M_a\lb\frac{2V_{2\eta}}{c}+\delay\rb\beta\leq\alpha.
\end{align*}
Along with \eqref{eq:ITxdelay}, this completes the proof of (b). 
\end{proof}

\begin{lemma}\label{lem:sigmamuT}
Given $\mu\in(0,\mu_0/2)$,
	$$\lim_{T\to\infty}\limsup_{\ve\to0}\sup_{\phi\in\sphere(\orbit,2\mu)}\ve\log\P\left(\sigma_\mu^{\ve,\phi}>T\right)=-\infty.$$ 
\end{lemma}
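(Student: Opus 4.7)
The plan is to combine the uniform attraction of $\overline{\domain}$ to $\orbit$ with the uniform LDP upper bound (Theorem \ref{thm:uniformLDP}, part 1) to show that, starting from any initial condition in a bounded set, the process $\X^{\ve,\psi}$ hits $\sphere(\orbit,\mu)$ or exits $\domain$ within a fixed deterministic horizon $T_0$ with probability that decays exponentially in $\ve$, uniformly in $\psi$. The statement will then follow by iterating this bound via the strong Markov property of $\X^\ve$.

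First, since $\overline{\domain}\subset\ball(\domain,\eta_0)$ and the latter is uniformly attracted to $\orbit$, I would fix $T_0>0$ so that $d_0(\orbit,\x_t^\psi)\leq\mu/4$ for every $\psi\in\overline{\domain}$ and $t\geq T_0$. Because $2\mu<\mu_0$, we have $\sphere(\orbit,2\mu)\subset\ball(\orbit,\mu_0)\subset\domain^s\subset\domain$. For $\psi\in\domain\setminus\overline{\ball(\orbit,\mu)}$, on $\{\sigma_\mu^{\ve,\psi}>T_0\}$ the continuous map $t\mapsto d_0(\X_t^{\ve,\psi},\orbit)$ starts strictly above $\mu$ and never reaches $\mu$, so by continuity it stays strictly above $\mu$ throughout $[0,T_0]$. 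Therefore $\X^{\ve,\psi}|_{[-\delay,T_0]}$ lies in the closed set
\[
F=\lcb y\in C([-\delay,T_0],\R^d):d_0(y_{T_0},\orbit)\geq\mu\rcb,
\]
and Theorem \ref{thm:uniformLDP} with $\K=\overline{\domain}$ (bounded) gives
\[
\limsup_{\ve\to0}\sup_{\psi\in\overline{\domain}}\ve\log\P\lb\X^{\ve,\psi}|_{[-\delay,T_0]}\in F\rb\leq-\lim_{\eta\to0}\inf_{\psi\in\overline{\domain}}I_{T_0}^\psi(F^\eta).
\]

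The hard part will be showing that this bound is strictly negative. For $\eta<\mu/4$, $y\in F^\eta$, and $\psi\in\overline{\domain}$, the definition of $F^\eta$ yields $d_0(y_{T_0},\orbit)\geq\mu-\eta$, which combined with $d_0(\x_{T_0}^\psi,\orbit)\leq\mu/4$ forces $\norm{y-\x^\psi}_{[-\delay,T_0]}\geq d_0(y_{T_0},\x_{T_0}^\psi)\geq\mu/2$. If $I_{T_0}^\psi(y)\geq 1$ we are done; otherwise I would invoke the Gronwall estimate from the proof of Lemma \ref{lem:levelsets}, with $B=\sup_{\psi\in\overline{\domain}}\norm{\psi}_{[-\delay,0]}<\infty$ in place of $M_1$ and $N=3$ in place of $2M+\alpha$, to obtain a constant $R_1$ (depending only on $B$, $T_0$, $\lip_2$) with $\norm{y}_{[-\delay,T_0]}\leq R_1$. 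Lemma \ref{lem:xxphi} then yields
\[
(\mu/2)^2\leq\norm{y-\x^\psi}_{[-\delay,T_0]}^2\leq 4I_{T_0}^\psi(y)\lip_2(1+R_1^2)T_0\exp(2\lip_1T_0^2),
\]
which forces $I_{T_0}^\psi(y)\geq\alpha_0$ for a positive constant $\alpha_0$ independent of $\psi$, $y$, and $\eta<\mu/4$. Setting $q(\ve):=\sup_{\psi\in\domain\setminus\overline{\ball(\orbit,\mu)}}\P(\sigma_\mu^{\ve,\psi}>T_0)$, the inclusion of events established above together with the LDP bound yields $\limsup_{\ve\to0}\ve\log q(\ve)\leq-\alpha_0$.

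To conclude, I would iterate via the strong Markov property of $\X^\ve$ at times $kT_0$. On $\{\sigma_\mu^{\ve,\phi}>kT_0\}$, sample path continuity again prevents $t\mapsto d_0(\X_t^{\ve,\phi},\orbit)$ from crossing $\mu$, so $\X_{kT_0}^{\ve,\phi}\in\domain\setminus\overline{\ball(\orbit,\mu)}$; hence
\[
\P\lb\sigma_\mu^{\ve,\phi}>(k+1)T_0\rb\leq q(\ve)\cdot\P\lb\sigma_\mu^{\ve,\phi}>kT_0\rb.
\]
Since $\phi\in\sphere(\orbit,2\mu)\subset\domain\setminus\overline{\ball(\orbit,\mu)}$, induction gives $\sup_{\phi\in\sphere(\orbit,2\mu)}\P(\sigma_\mu^{\ve,\phi}>kT_0)\leq q(\ve)^k$, so that
\[
\limsup_{\ve\to0}\sup_{\phi\in\sphere(\orbit,2\mu)}\ve\log\P\lb\sigma_\mu^{\ve,\phi}>kT_0\rb\leq -k\alpha_0.
\]
Taking $T=kT_0$ and letting $k\to\infty$ completes the proof.
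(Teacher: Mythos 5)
Your proposal is correct and establishes the result, but it takes a genuinely different route from the paper's proof. The paper works with a single closed set $F_T=\{x:x_t\in\overline{\domain}\setminus\ball(\orbit,\mu)\ \forall\ t\in[0,T]\}$, applies the uniform LDP upper bound once, and then proves directly that $\lim_{T\to\infty}\lim_{\eta\to0}\inf_\phi I_T^\phi(F_T^\eta)=\infty$. That divergence is established by contradiction: a hypothetical low-rate path in $F_{nT^\dagger}^{\eta_1}$ is cut into $n$ segments via Lemma~\ref{lem:concat}, one segment must have rate $\le 2M/n$, and Lemma~\ref{lem:xxphi} plus uniform attraction forces that segment close to $\orbit$, contradicting membership in $F_{T^\dagger}^{\eta_1}$. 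You instead prove a \emph{fixed-horizon} exponential bound: you pick $T_0$ from uniform attraction so the DDE flow lands within $\mu/4$ of $\orbit$, define the lighter set $F$ constraining only the terminal segment $y_{T_0}$, and use Lemma~\ref{lem:xxphi} together with the Gronwall a priori bound from Lemma~\ref{lem:levelsets} to show $\lim_{\eta\to0}\inf_{\psi\in\overline{\domain}}I_{T_0}^\psi(F^\eta)\geq\alpha_0>0$. The divergence as $T\to\infty$ is then produced \emph{probabilistically}, by iterating the resulting bound $q(\ve)$ via the strong Markov property at times $kT_0$, giving $\P(\sigma_\mu^{\ve,\phi}>kT_0)\le q(\ve)^k$. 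In essence, your Markov iteration at the process level replaces the paper's rate-function additivity argument (Lemma~\ref{lem:concat}) at the path level; both approaches use Lemma~\ref{lem:xxphi} and uniform attraction as the core engine, but your route needs only an endpoint constraint on $F$, avoids the contradiction step, and only requires a strictly positive (not diverging) lower bound on the rate for a single fixed horizon. The one mild cost is that you rely on the strong Markov property and on being able to restart from an arbitrary $\psi\in\domain\setminus\overline{\ball(\orbit,\mu)}\subset\overline{\domain}$, whereas the paper's argument stays entirely at the level of the rate function; in settings where one wishes to isolate the LDP-analytic content, the paper's formulation may be preferable, but your version is arguably more transparent.

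Two minor points worth tightening in a final write-up: (i) when invoking the Gronwall estimate from Lemma~\ref{lem:levelsets}, the constraint $\alpha\in(0,1)$ there makes $N=3$ not literally a permitted value, so state explicitly that you are re-running the estimate with $N$ any fixed number exceeding $2$, which is harmless; (ii) in the Markov iteration, note that $\{\sigma_\mu^{\ve,\phi}>kT_0\}\in\F_{kT_0}$ and that on this event $\X_{kT_0}^{\ve,\phi}\in\domain\setminus\overline{\ball(\orbit,\mu)}$ by sample-path continuity of $t\mapsto d_0(\X_t^{\ve,\phi},\orbit)$, so the conditional probability of the remaining event given $\F_{kT_0}$ is bounded by $q(\ve)$, which is what justifies $\P(\sigma_\mu^{\ve,\phi}>(k+1)T_0)\le q(\ve)\P(\sigma_\mu^{\ve,\phi}>kT_0)$.
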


\begin{proof}
Fix $\mu\in(0,\mu_0/2)$. For $T>0$, define the closed set
	\be\label{eq:FTDB}F_T=\lcb\x\in C([-\delay,T],\R^d):\x_t\in\overline{\domain}\setminus\ball(\orbit,\mu)\text{ for all }t\in[0,T]\rcb.\ee
For $\ve>0$ and $\phi\in\sphere(\orbit,2\mu)$, it follows from the definition of $\sigma_\mu^{\ve,\phi}$ in \eqref{eq:exitmu} that
	$$\{\sigma_\mu^{\ve,\phi}>T\}\subset\{\X^{\ve,\phi}|_{[-\delay,T]}\in F_T\}.$$
Thus, by the uniform LDP upper bound stated in part 2 of Theorem \ref{thm:uniformLDP},
\begin{align*}
	\limsup_{\ve\to0}\ve\log\sup_{\phi\in\sphere(\orbit,2\mu)}\P(\sigma_\mu^{\ve,\phi}>T)	&\leq-\lim_{\eta\to0}\inf_{\phi\in\sphere(\orbit,2\mu)}I_T^\phi(F_T^\eta),
\end{align*}
where $F_T^\eta$ is the closed subset of $C([-\delay,T],\R^d)$ defined as in \eqref{eq:Feta}, but with $F_T^\eta$ and $F_T$ in place of $F^\eta$ and $F$, respectively. Thus, we are left to prove
	\be\label{eq:limTetaIinfty}\lim_{T\to\infty}\lim_{\eta\to0}\inf_{\phi\in\sphere(\orbit,2\mu)}I_T^\phi(F_T^\eta)=\infty.\ee
By \eqref{eq:Feta} and \eqref{eq:FTDB}, for all $\eta\in(0,\mu)$ and $T>0$,
	\be\label{eq:FTeta}F_T^{\eta}=\lcb x\in C([-\delay,T],\R^d):x_t\in\overline{\ball(\domain,\eta)}\setminus\ball(\orbit,\mu-\eta)\text{ for all }t\in[0,T]\rcb,\ee
where $\overline{\ball(\domain,\eta)}$ denotes the closure of $\ball(\domain,\eta)$ in $\C$. It is straightforward to check that 
\begin{itemize}
	\item[(a)] for fixed $T>0$, $\inf_{\phi\in\sphere(\orbit,2\mu)}I_T^\phi(F_T^\eta)$ is nondecreasing as $\eta\to0$, and
	\item[(b)] for fixed $\eta>0$, $\inf_{\phi\in\sphere(\orbit,2\mu)}I_T^\phi(F_T^\eta)$ is nondecreasing as $T\to\infty$.
\end{itemize}
Recall that $\eta_0>0$ is such that $\ball(\domain,\eta_0)$ is uniformly attracted to $\orbit$. Fix $\eta_1\in(0,\mu\wedge\eta_0)$. By (a), in order to prove \eqref{eq:limTetaIinfty}, it suffices to show that
	\be\label{eq:limTIinfty}\lim_{T\to\infty}\inf_{\phi\in\sphere(\orbit,2\mu)}I_T^\phi(F_T^{\eta_1})=\infty.\ee
For a proof by contradiction, suppose there exists $M>0$ such that
	\be\label{eq:ITM}\inf_{\phi\in\sphere(\orbit,2\mu)}I_T^\phi(F_T^{\eta_1})\leq M\quad\text{for all }T>0.\ee
Since $\ball(\domain,\eta_0)$ is uniformly attracted to $\orbit$, we can choose $T^\dagger>0$ sufficiently large such that
	\be\label{eq:Tgammainverse}d_0(\orbit,\x_{T^\dagger}^\phi)<\frac{\mu-\eta_1}{2},\qquad\phi\in\ball(\domain,\eta_0).\ee
Let $M_1>0$ be sufficiently large such that 
	\be\label{eq:M1bound}\norm{\phi}_{[-\delay,0]}\leq M_1,\qquad\phi\in\ball(\domain,\eta_0).\ee 
Let $\lip_2>0$ be as in \eqref{eq:lipbound} and $n$ be a positive integer satisfying 
	$$n\geq\frac{16M\lip_2(1+M_1^2)T^\dagger e^{2\lip_1(T^\dagger)^2}}{\mu-\eta_1}.$$ 
By \eqref{eq:ITM}, there exists $\x\in F_{nT^\dagger}^{\eta_1}$ such that
	$$I_{nT^\dagger}(\x)\leq 2M.$$
Define the functions $\x^1,\dots,\x^n$ on $[-\delay,T]$ by
	\be\label{eq:xk}\x^k(t)=\x(kS+t),\qquad t\in[-\delay,T],\qquad k=1,\dots,n.\ee
Since $x\in F_{nT^\dagger}^{\eta_1}$, it follows from \eqref{eq:xk} and \eqref{eq:FTeta} that $\x^k\in F_{T^\dagger}^{\eta_1}$ for each $k=1,\dots,n$. Then by $n-1$ applications of Lemma \ref{lem:concat}, we have
	$$2M\geq I_{nT^\dagger}(\x)=\sum_{k=1}^{n}I_{T^\dagger}\lb\x^k\rb.$$
Thus, there exists $l\in\{1,\dots,n\}$ such that
	$$I_{T^\dagger}\lb\x^l\rb\leq\frac{2M}{n}\leq\frac{\mu-\eta_1}{2}\frac{1}{4\lip_2(1+M_1^2)T^\dagger e^{2\lip_1(T^\dagger)^2}}.$$
Since $x^l\in F_{T^\dagger}^{\eta_1}$, it follows from \eqref{eq:FTeta} that $x_t^l\in\ball(\domain,\eta_0)$ for all $t\in[0,T^\dagger]$. Set $\phi=x_0^l\in\ball(\domain,\eta_0)$. Then by Lemma \ref{lem:xxphi}, the last display and \eqref{eq:M1bound},
\begin{align}\label{eq:xxl}
	\norm{\x^l-\x^\phi}_{[-\delay,T^\dagger]}^2\leq4I_{T^\dagger}(\x^l)\lip_2\lb1+\norm{x^l}_{[-\delay,T^\dagger]}^2\rb T^\dagger e^{2\lip_1(T^\dagger)^2}\leq\frac{\mu-\eta_1}{2}.
\end{align}
Therefore, by \eqref{eq:xxl}, the fact that $\phi\in\ball(\domain,\eta_0)$ and \eqref{eq:Tgammainverse},
	$$d_0(\orbit,\x_T^l)\leq d_0(\x_T^l,\x_T^\phi)+d_0(\orbit,\x_T^\phi)<\mu-\eta_1,$$
which contradicts the fact that $\x^l\in F_T^\eta$. With the contradiction thus obtained, it follows that \eqref{eq:limTIinfty} holds. This completes the proof of the lemma.
\end{proof}

\begin{lemma}\label{lem:phiS2muVbar}
Given $\alpha>0$, there exists $\mu\in(0,\mu_0/2)$ such that
	\be\label{eq:phiS2muVbar}\limsup_{\ve\to0}\sup_{\phi\in \sphere(\orbit,2\mu)}\ve\log\P\lb\X_{\sigma_\mu^{\ve,\phi}}^{\ve,\phi}\in\domain^c\rb\leq-\underline{V}+2\alpha.\ee
\end{lemma}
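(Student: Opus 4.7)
The plan is to combine the uniform LDP upper bound (Theorem \ref{thm:uniformLDP}, part~1) applied on a finite horizon $[0,T]$ with Lemma \ref{lem:sigmamuT} (to handle the tail of the exit time) and Lemma \ref{lem:IphiVetaalpha} (to convert the rate-function infimum into a quasipotential bound). Given $\alpha>0$, first choose $\eta>0$ so that $V_{2\eta}\geq \underline V-\alpha$, which is possible because $V_\eta$ is nondecreasing as $\eta\downarrow 0$ with limit $\underline V$. Then apply Lemma \ref{lem:IphiVetaalpha} with this $\eta$ and $\alpha$ to obtain some $\mu_1\in(0,\mu_0)$ such that whenever $T>0$, $x\in C([-\delay,T],\R^d)$ and $x_T\in \ball(\domain^c,\eta)$,
$$\inf_{\phi\in\sphere(\orbit,\mu_1)}I_T^\phi(x)\geq V_{2\eta}-\alpha\geq \underline V-2\alpha.$$
Set $\mu=\mu_1/2\in(0,\mu_0/2)$, so $\sphere(\orbit,2\mu)=\sphere(\orbit,\mu_1)$.

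Next, fix $T>0$ and decompose
$$\P\bigl(\X_{\sigma_\mu^{\ve,\phi}}^{\ve,\phi}\in\domain^c\bigr)\leq \P\bigl(\sigma_\mu^{\ve,\phi}>T\bigr)+\P\bigl(\X^{\ve,\phi}|_{[-\delay,T]}\in F_T\bigr),$$
where
$$F_T=\lcb x\in C([-\delay,T],\R^d):x_s\in\domain^c\text{ for some }s\in[0,T]\rcb.$$
A standard diagonal argument (using compactness of $[0,T]$, uniform continuity of any limit path, and closedness of $\domain^c$) shows $F_T$ is closed. The uniform LDP upper bound then gives
$$\limsup_{\ve\to 0}\sup_{\phi\in\sphere(\orbit,2\mu)}\ve\log \P\bigl(\X^{\ve,\phi}|_{[-\delay,T]}\in F_T\bigr)\leq -\lim_{\eta'\to 0}\inf_{\phi\in\sphere(\orbit,2\mu)}I_T^\phi(F_T^{\eta'}).$$
For $\eta'\in(0,\eta)$ and $x\in F_T^{\eta'}$ with $x_0\in\sphere(\orbit,2\mu)$, any approximant $x'\in F_T$ with $\norm{x-x'}_{[-\delay,T]}\leq \eta'$ has some $s\in[0,T]$ with $x'_s\in\domain^c$; since $d_0(x_s,x'_s)\leq \norm{x-x'}_{[-\delay,T]}\leq \eta'<\eta$, we deduce $x_s\in\ball(\domain^c,\eta)$. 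By Lemma \ref{lem:concat} applied at time $s$ and the bound of the previous paragraph,
$$I_T^{x_0}(x)\geq I_s^{x_0}(x|_{[-\delay,s]})\geq \underline V-2\alpha,$$
which on taking infimum and $\eta'\to 0$ gives $\lim_{\eta'\to 0}\inf_{\phi\in\sphere(\orbit,2\mu)}I_T^\phi(F_T^{\eta'})\geq \underline V-2\alpha$.

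Combining the two pieces of the decomposition via \eqref{eq:loglimit} yields
$$\limsup_{\ve\to 0}\sup_{\phi\in\sphere(\orbit,2\mu)}\ve\log \P\bigl(\X_{\sigma_\mu^{\ve,\phi}}^{\ve,\phi}\in\domain^c\bigr)\leq\max\bigl(A_T,\,-\underline V+2\alpha\bigr),$$
where $A_T=\limsup_{\ve\to 0}\sup_{\phi\in\sphere(\orbit,2\mu)}\ve\log \P(\sigma_\mu^{\ve,\phi}>T)$. Since $\mu\in(0,\mu_0/2)$, Lemma \ref{lem:sigmamuT} yields $A_T\to -\infty$ as $T\to\infty$, completing the proof upon letting $T\to\infty$. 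The one genuinely delicate step is matching the LDP enlargement $F_T^{\eta'}$ with the strict open-neighborhood hypothesis of Lemma \ref{lem:IphiVetaalpha}: this is the reason for choosing $\eta'<\eta$ and exploiting the trivial bound $d_0(x_s,x'_s)\leq \norm{x-x'}_{[-\delay,T]}$; the substantive work has already been absorbed into Lemmas \ref{lem:IphiVetaalpha} and \ref{lem:sigmamuT}.
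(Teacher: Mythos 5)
Your proof is correct and follows essentially the same path as the paper's: decompose over a finite horizon $T$ into the tail event $\{\sigma_\mu^{\ve,\phi}>T\}$ (controlled by Lemma \ref{lem:sigmamuT}) and the exit event $\{X^{\ve,\phi}|_{[-\delay,T]}\in F_T\}$ (controlled by the uniform LDP upper bound together with Lemmas \ref{lem:IphiVetaalpha} and \ref{lem:concat}), combine via \eqref{eq:loglimit}, and let $T\to\infty$. One nice touch is that by setting $\mu=\mu_1/2$ so that $\sphere(\orbit,2\mu)=\sphere(\orbit,\mu_1)$, you make explicit the reconciliation between the sphere radius produced by Lemma \ref{lem:IphiVetaalpha} and the radius $2\mu$ that appears in the uniform LDP estimate, a point the paper's displayed inequality leaves slightly blurred by writing $\sphere(\orbit,\mu)$ where $\sphere(\orbit,2\mu)$ is required.
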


\begin{proof}
Fix $\alpha>0$. By the definition of $\underline{V}$ in \eqref{eq:Vlower}, we can choose $\eta^\dagger>0$ such that $V_{2\eta^\dagger}\geq\underline{V}-\alpha$. Let $\mu\in(0,\mu_0/2)$ be as in Lemma \ref{lem:IphiVetaalpha} (with $\eta^\dagger$ in place of $\eta$). By Lemma \ref{lem:sigmamuT}, we can choose $T>0$ such that
	\be\label{eq:sigmaTV}\limsup_{\ve\to0}\sup_{\phi\in\sphere(\orbit,2\mu)}\ve\log\P\lb\sigma_\mu^{\ve,\phi}>T\rb\leq-\underline{V}.\ee
Define the closed set $F\subset C([-\delay,T],\R^d)$ by
	\be\label{eq:Fdef}F=\lcb x\in C([-\delay,T],\R^d):\x_t\in\domain^c\text{ for some }t\in[0,T]\rcb.\ee
By the uniform LDP upper bound shown in part 2 of Theorem \ref{thm:uniformLDP},
	\be\label{eq:supphiFeta}\limsup_{\ve\to0}\sup_{\phi\in \sphere(\orbit,2\mu)}\ve\log \P(\X^\ve\in F)\leq-\lim_{\eta\to0}\inf_{\phi\in\sphere(\orbit,2\mu)}I_T^\phi(F^\eta),\ee
where $F^\eta$ is defined as in \eqref{eq:Feta} for $\eta>0$. Next, we show that
	\be\label{eq:limetaIFetaV}\lim_{\eta\to0}\inf_{\phi\in\sphere(\orbit,2\mu)}I_T^\phi(F^\eta)\geq\underline{V}-2\alpha.\ee
Let $x\in F^{\eta^\dagger}$. By \eqref{eq:Feta}, there exists $y\in F$ such that $d_T(x,y)\leq\eta^\dagger$. By \eqref{eq:Fdef}, there exists $t\in[0,T]$ such that $y_t\in\domain^c$. Thus,
	$$d_0(\domain^c,\x_t)\leq d_0(\domain^c,y_t)+d_0(x_t,y_t)\leq d_T(x^\dagger,y)\leq\eta^\dagger.$$
It follows from Lemma \ref{lem:concat}, Lemma \ref{lem:IphiVetaalpha} and our choice of $\eta^\dagger>0$ that 
	$$\inf_{\phi\in\sphere(\orbit,\mu)}I_T^\phi(x)\geq\inf_{\phi\in\sphere(\orbit,\mu)}I_t^\phi(x|_{[-\delay,t]})\geq V_{2\eta^\dagger}-\alpha\geq\underline{V}-2\alpha.$$
Since the last display holds for all $x\in F^{\eta^\dagger}$, we see that \eqref{eq:limetaIFetaV} holds. Along with \eqref{eq:supphiFeta}, this implies
	\be\label{eq:XveFVdelta}\limsup_{\ve\to0}\sup_{\phi\in \sphere(\orbit,2\mu)}\ve\log \P(\X^{\ve,\phi}|_{[-\delay,T]}\in F)\leq-\underline{V}+2\alpha.\ee
By \eqref{eq:exitmu},
	$$\P\lb\X_{\sigma_\mu^{\ve,\phi}}^{\ve,\phi}\in\domain^c\rb\leq\P\lb\X^{\ve,\phi}|_{[-\delay,T]}\in F\rb+\P(\sigma_\mu^{\ve,\phi}>T).$$
It then follows from \eqref{eq:loglimit}, \eqref{eq:sigmaTV} and \eqref{eq:XveFVdelta} that \eqref{eq:phiS2muVbar} holds.
\end{proof}

\begin{lemma}\label{lem:T0}
For each $\mu>0$,
	\be\label{eq:Tzeroinfty}\lim_{S\to0}\limsup_{\ve\to0}\sup_{\phi\in\sphere(\orbit,\mu)}\ve\log\P\lb\sup_{0\leq s\leq S}d_0(\orbit,\X_s^{\ve,\phi})\geq2\mu\rb=-\infty.\ee
\end{lemma}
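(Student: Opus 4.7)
The plan is to apply the uniform LDP upper bound (Theorem~\ref{thm:uniformLDP}, part~1) with initial set $\K = \sphere(\orbit,\mu)$, which is bounded in $\C$ because $\orbit$ is compact. For fixed $S > 0$, define the closed set
\[
F_S = \lcb x \in C([-\delay,S],\R^d) : \sup_{0 \leq s \leq S} d_0(\orbit, x_s) \geq 2\mu \rcb.
\]
Since the event in \eqref{eq:Tzeroinfty} is contained in $\{\X^{\ve,\phi}|_{[-\delay,S]} \in F_S\}$, the uniform LDP yields
\[
\limsup_{\ve \to 0} \sup_{\phi \in \sphere(\orbit,\mu)} \ve \log \P\lb \sup_{0 \leq s \leq S} d_0(\orbit, \X_s^{\ve,\phi}) \geq 2\mu\rb \leq -\lim_{\eta \to 0} \inf_{\phi \in \sphere(\orbit,\mu)} I_S^\phi(F_S^\eta),
\]
so it suffices to show that this right-hand side tends to $-\infty$ as $S \to 0$.

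To produce the required lower bound on the rate function, fix $\phi \in \sphere(\orbit,\mu)$ and, by compactness of $\orbit$, choose $t^\ast \in [0,\period)$ with $\norm{\phi - \xsops_{t^\ast}}_{[-\delay,0]} = \mu$. Given $x \in C([-\delay,S],\R^d)$ with $x_0 = \phi$ and $I_S^\phi(x) < \infty$, pick $\alpha > 0$ and a control $u \in U_S(x)$ with $\frac{1}{2}\int_0^S|u(r)|^2 dr \leq I_S^\phi(x) + \alpha$. Introduce the nondecreasing function $G(t) = \sup_{v \in [-\delay,t]}|x(v) - \xsops(t^\ast + v)|$, so that $G(0) \leq \mu$. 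Subtracting \eqref{eq:xv} for $x$ from the analogous identity \eqref{eq:dde} for $\xsops$ and applying Assumption~\ref{ass:lip}, the Cauchy-Schwarz inequality, and \eqref{eq:lipbound} on the control term, one obtains, for $t \in [0,S]$,
\[
G(t) \leq \mu + \sqrt{\lip_1} \int_0^t G(r)\, dr + M_\g \sqrt{S\, (2 I_S^\phi(x) + 2\alpha)},
\]
where $M_\g$ is a uniform bound on $|\g(x_r)|$ for $r \in [0,S]$. Gronwall's inequality then gives $G(S) \leq (\mu + M_\g\sqrt{S(2 I_S^\phi(x) + 2\alpha)})\, e^{\sqrt{\lip_1} S}$. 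On the other hand, when $x \in F_S^\eta$, the triangle inequality gives some $s \in [0,S]$ with $d_0(\orbit,x_s) \geq 2\mu - \eta$, and since $d_0(\orbit,x_s) \leq \norm{x_s - \xsops_{t^\ast + s}}_{[-\delay,0]} \leq G(S)$, we conclude $G(S) \geq 2\mu - \eta$. Comparing the two estimates, letting $\alpha \to 0$, and choosing $S$ and $\eta$ small enough that $\mu e^{\sqrt{\lip_1}S} + \eta \leq \tfrac{3\mu}{2}$ forces $I_S^\phi(x) \geq c\mu^2/(M_\g^2 S)$ for a universal $c > 0$, which blows up as $S \to 0$.

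The main technical obstacle is securing the uniform bound $M_\g$ on $|\g(x_r)|$ across $\phi \in \K$ and across paths $x$ with controlled rate function, since \eqref{eq:lipbound} only yields $|\g(\phi)|^2 \leq \lip_2(1+\norm{\phi}_{[-\delay,0]}^2)$. It is enough to consider the nontrivial case $I_S^\phi(x) \leq c\mu^2/S$ (otherwise we are done), and in this regime a self-bootstrapping Gronwall estimate on $\norm{x}_{[-\delay,t]}$ using \eqref{eq:lipbound}, the Cauchy-Schwarz inequality, and the bound $\int_0^S |u(r)|^2 dr \leq 2 I_S^\phi(x) + 2\alpha$ produces, for $S$ and $\mu$ small enough, a uniform bound on $\norm{x}_{[-\delay,S]}$ depending only on $\orbit$, $\mu$, $\lip_1$ and $\lip_2$. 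This delivers the uniform $M_\g$, and letting $\eta \to 0$ followed by $S \to 0$ completes the argument.
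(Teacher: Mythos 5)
Your high-level strategy is the same as the paper's: apply the uniform LDP upper bound with $\K = \sphere(\orbit,\mu)$ and $F_S$ as you define it, reduce to showing $\lim_{S\to0}\inf_{\phi}I_S^\phi(F_S^\eta)=\infty$, and extract a lower bound on the rate function from the distance $\geq 2\mu-\eta$ that a path in $F_S^\eta$ must travel from $\xsops_{t^\ast}$. Where you diverge is in securing the uniform bound on $|\g(x_r)|$, which is indeed the crux. The paper handles it with a cleaner observation that makes your entire final paragraph unnecessary: by taking $t\in[0,S]$ to be the \emph{first} time that $d_0(\orbit,x_t)\geq 2\mu-\eta$, the path segments $x_s$ lie in $\ball(\orbit,2\mu)$ for all $s\in[0,t]$, and since $\ball(\orbit,2\mu)$ is a bounded subset of $\C$ (as $\orbit$ is compact), \eqref{eq:lipbound} immediately gives a constant $M_\mu$ with $|\f(\psi)|+|\g(\psi)|^2\leq M_\mu$ for all $\psi\in\overline{\ball(\orbit,2\mu)}$. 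No bootstrap and no case split on the size of $I_S^\phi(x)$ is required; in fact, once $M_\mu$ is available, the paper also bounds the drift term by $2SM_\mu$ directly (using boundedness of $\f$, not Lipschitz), which avoids your Gronwall step for $G$ entirely. Your alternative --- restrict to the case $I_S^\phi(x)\leq c\mu^2/S$ and bootstrap a uniform bound on $\norm{x}_{[-\delay,S]}$ via \eqref{eq:lipbound} --- can be made to work: the key point is that the Gronwall exponent $3\lip_2(S + 2c\mu^2/S + 2\alpha)S = 3\lip_2(S^2+2c\mu^2+2\alpha S)$ stays bounded as $S,\alpha\to0$, so $\norm{x}_{[-\delay,S]}$ is uniformly bounded in that regime. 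But there is a minor circularity (the threshold constant $c$ must be chosen before $M_\g$ is determined, yet the final lower bound depends on $M_\g$) that is cleanest to resolve by fixing an arbitrary level $A$, showing $\inf_\phi I_S^\phi(F_S^\eta)\geq A$ for $S$ small, and then sending $A\to\infty$. Also, your phrase ``for $S$ and $\mu$ small enough'' should read just ``for $S$ small enough'': $\mu$ is fixed in the statement of the lemma and need not be small --- and indeed the resulting bound on $\norm{x}_{[-\delay,S]}$ may depend on $\mu$, which is fine. In short, your proof is essentially correct but takes a more elaborate route to the coefficient bound than the stopping-time-plus-boundedness argument in the paper.
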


\begin{proof}
Fix $\mu>0$. For $S>0$ define the closed set
	\be\label{eq:FS}F_S=\lcb x\in C([-\delay,S],\R^d):\sup_{0\leq s\leq S}d_0(\orbit,\x_s)\geq2\mu\rcb.\ee
For $S>0$ and $\eta>0$, define $F_S^\eta$ as in \eqref{eq:Feta}, but with $S$, $F_S^\eta$ and $F_S$ in place of $T$, $F^\eta$ and $F$, respectively. It follows from \eqref{eq:Feta}, \eqref{eq:FS} and the triangle inequality that for all $S>0$ and $\eta>0$,
\begin{align}\label{eq:FSeta}
	F_S^\eta&\subset\lcb\x\in C([-\delay,S],\R^d):\sup_{0\leq s\leq S}d_0(\orbit,\x_s)\geq2\mu-\eta\rcb.
\end{align}
For $S>0$, by the uniform LDP upper bound shown in part 1 of Theorem \ref{thm:uniformLDP}, we have
	$$\limsup_{\ve\to0}\sup_{\phi\in \sphere(\orbit,\mu)}\ve\log\P\lb\sup_{0\leq s\leq S}d_0(\orbit,\X_s^{\ve,\phi})>2\mu\rb\leq-\lim_{\eta\to0}\inf_{\phi\in\ball(\orbit,\mu)}I_S^\phi(F_S^\eta).$$
By the definition of the rate function in \eqref{eq:rate} and \eqref{eq:Feta}, $I_S^\phi(F_S^\eta)$ is nondecreasing as $\eta\to0$. Thus, it suffices to show that for some $\eta>0$,
	\be\label{eq:limT0IFetaInf}\lim_{S\to0}\inf_{\phi\in\sphere(\orbit,\mu)}I_S^\phi(F_S^\eta)=\infty.\ee
Fix $\eta\in(0,\mu/4)$. By \eqref{eq:lipbound}, we can choose $M_\mu>0$ such that 
	\be\label{eq:Mb}|\f(\phi)|+|\g(\phi)|^2\leq M_\mu,\qquad\phi\in\overline{\ball(\orbit,2\mu)}.\ee 
Let
	\be\label{eq:Scond}0<S<\frac{\mu}{4M_\mu}.\ee 
Suppose $\x\in F_S^\eta$ is such that $\inf_{\phi\in\sphere(\orbit,\mu)}I_S^\phi(\x)<\infty$. Then \eqref{eq:rate} implies $\x_0\in\sphere(\orbit,\mu)$ and $U_S(\x)$ is nonempty. Let $t^\ast\in[0,\period)$ be such that 
	\be\label{eq:d0xsopsx0}d_0(\xsops_{t^\ast},\x_0)=d_0(\orbit,\x_0)=\mu.\ee 
Since $\x\in F_S^\eta$ and $s\to\x_s$ is a continuous function from $[0,S]$ to $\C$, it follows from \eqref{eq:d0xsopsx0} and \eqref{eq:FSeta} that there exists $t\in[0,S]$ such that $\x_s\in\ball(\orbit,2\mu)$ for all $s\in[0,t]$ and 
	$$\sup_{u\in[-\delay,0]}|\xsops(t^\ast+t+u)-x(t+u)|=d_0(\xsops_{t^\ast+t},\x_t)\geq d_0(\orbit,\x_t)\geq2\mu-\eta.$$ 
Let $t^\dagger\in[t-\delay,t]$, be such that
	\be\label{eq:xsopstdagger}|\xsops(t^\ast+t^\dagger)-\x(t^\dagger)|\geq2\mu-\eta.\ee
By \eqref{eq:d0xsopsx0}, \eqref{eq:xsopstdagger} and the fact that $\eta\in(0,\mu/4)$, we must have $t^\dagger>0$. Thus, we have the following inequalities, which are explained below:
\begin{align*}
	2\mu-\eta&\leq|\xsops(t^\ast)-\x(0)|+\int_0^t|\f(\xsops_{t^\ast+s})-\f(\x_s)|ds+\left|\int_0^t\g(\x_s)u(s)ds\right|\\
	&\leq\frac{3\mu}{2}+\sqrt{M_\mu S\int_0^S|u(s)|^2ds}.
\end{align*}
The first inequality follows from \eqref{eq:xsopstdagger}, \eqref{eq:dde} and \eqref{eq:xv}. The second inequality is due to \eqref{eq:d0xsopsx0}, \eqref{eq:Mb}, \eqref{eq:Scond} and the Cauchy-Schwarz inequality. Rearranging yields
	$$\frac{1}{2}\int_0^S|u(s)|^2ds>\frac{\mu^2}{32SM_\mu},$$
where we have used the fact that $\eta\in(0,\mu/4)$. Since this holds for all $x\in F_S^\eta$ satisfying $\inf_{\phi\in\ball(\orbit,\mu)}I_S^\phi(x)<\infty$ and all $u\in U_S(x)$, it follows from the definition of the rate function in \eqref{eq:rate} that
	$$I_S(F_S^\eta)\geq\frac{\mu^2}{32SM_\mu}.$$
Upon letting $S\to0$ we see that \eqref{eq:limT0IFetaInf} holds. This completes the proof of the lemma.
\end{proof}

\subsection{Exit time upper bound}

In this section we prove upper bounds on the exit time $\exit^{\ve,\phi}$ defined in \eqref{eq:exitvedelta}.

\begin{lemma}\label{lem:exitupper}
Let $\phi\in\domain$. Then
\begin{equation}\label{eq:exitupper1}
	\limsup_{\ve\to0}\ve\log\E\lsb\exit^{\ve,\phi}\rsb\leq \overline{V}
\end{equation}
and for all $\alpha>0$,
	\be\label{eq:exitupper2}\lim_{\ve\to0}\P\lb\exit^{\ve,\phi}< e^{(\overline{V}+\alpha)/\ve}\rb=1.\ee
\end{lemma}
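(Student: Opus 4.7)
The plan is a standard Freidlin--Wentzell iteration via the strong Markov property, driven by the uniform LDP lower bound of Theorem \ref{thm:uniformLDP}. Fix $\alpha>0$ and $\phi\in\domain$.

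First, I would establish a uniform one-step exit probability from the $\mu$-tube around $\orbit$. Applying Lemma \ref{lem:NmuNh} with parameter $\alpha/4$ produces constants $\mu,h,T_2>0$ such that every $\psi\in\overline{\ball(\orbit,\mu)}$ admits a path in $C([-\delay,T^\psi],\R^d)$ of length $T^\psi\le T_2$ starting at $\psi$, ending outside $\ball(\domain,h)$, with cost less than $\overline{V}+\alpha/2$. After concatenating with the cost-free DDE continuation (Remark \ref{rmk:ratexphi}, Lemma \ref{lem:concat}) to extend to a path $x^\psi\in C([-\delay,T_2],\R^d)$, the open set
\begin{equation*}
	G=\lcb y\in C([-\delay,T_2],\R^d):d_0(\domain,y_t)>h/4\text{ for some }t\in[0,T_2]\rcb
\end{equation*}
will satisfy $\{X^{\ve,\psi}|_{[-\delay,T_2]}\in G\}\subset\{\exit^{\ve,\psi}\le T_2\}$; since $x^\psi\in G_{h/4}$ by the triangle inequality for $d_0(\domain,\cdot)$, the rate bound $\sup_{\psi\in\overline{\ball(\orbit,\mu)}}I_{T_2}^\psi(G_{h/4})<\overline{V}+\alpha/2$ will follow, and part 2 of Theorem \ref{thm:uniformLDP} with $\K=\overline{\ball(\orbit,\mu)}$ will give
\begin{equation*}
	\inf_{\psi\in\overline{\ball(\orbit,\mu)}}\P(\exit^{\ve,\psi}\le T_2)\ge e^{-(\overline{V}+\alpha)/\ve}
\end{equation*}
for all $\ve$ sufficiently small.

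Next I would extend this bound uniformly over $\overline{\domain}$. Since $\overline{\domain}\subset\ball(\domain,\eta_0)$ is uniformly attracted to $\orbit$, some $t_0>0$ gives $d_0(\orbit,x_{t_0}^\psi)\le\mu/2$ for all $\psi\in\overline{\domain}$, and combining Remark \ref{rmk:distribution} with Chebyshev's inequality yields $\inf_{\psi\in\overline{\domain}}\P(A^{\ve,\psi})\to 1$ as $\ve\to 0$, where $A^{\ve,\psi}:=\{\norm{X^{\ve,\psi}-x^\psi}_{[-\delay,t_0]}<\mu/2\}$. On $A^{\ve,\psi}$ either $\exit^{\ve,\psi}\le t_0$, or else $\exit^{\ve,\psi}>t_0$ and $X_{t_0}^{\ve,\psi}\in\overline{\ball(\orbit,\mu)}$ by the triangle inequality. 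Conditioning on $\F_{t_0}$ and invoking the strong Markov property (Proposition \ref{prop:sddereu}) with the first step will give, for $\ve$ sufficiently small,
\begin{equation*}
	\inf_{\psi\in\overline{\domain}}\P(\exit^{\ve,\psi}\le T)\ge q^\ve:=\tfrac{1}{2}e^{-(\overline{V}+\alpha)/\ve},\qquad T:=t_0+T_2.
\end{equation*}
Iteration via the Markov property at the deterministic times $nT$ then yields $\P(\exit^{\ve,\phi}>nT)\le(1-q^\ve)^n$ and hence $\E[\exit^{\ve,\phi}]\le T/q^\ve\le 2T e^{(\overline{V}+\alpha)/\ve}$. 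Taking $\ve\log$, $\limsup_{\ve\to 0}$, and then $\alpha\to 0$ will prove \eqref{eq:exitupper1}. For \eqref{eq:exitupper2}, running the argument with $\alpha/2$ in place of $\alpha$ and choosing $n_\ve=\lfloor T^{-1}e^{(\overline{V}+\alpha)/\ve}\rfloor$ leads to
\begin{equation*}
	\P(\exit^{\ve,\phi}>e^{(\overline{V}+\alpha)/\ve})\le\exp(-n_\ve q^\ve)\le\exp\lb-(2T)^{-1}e^{\alpha/(2\ve)}\rb\to 0.
\end{equation*}

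The main obstacle will be the uniformization step: the LDP lower bound controls only the prescribed bounded set, and Lemma \ref{lem:NmuNh} is stated only for starting points near $\orbit$, so arbitrary starting points in $\overline{\domain}$ must first be driven into the $\mu$-tube in uniformly bounded time. Both the uniform attraction of $\ball(\domain,\eta_0)$ to $\orbit$ and the uniform $L^2$-closeness of $X^{\ve,\psi}$ to $x^\psi$ from Remark \ref{rmk:distribution} (valid because $\overline{\domain}$ is bounded) are essential. Once the uniform one-step bound is in hand, the Markov iteration is routine.
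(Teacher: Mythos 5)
Your proposal follows the same overall route as the paper's proof: (i) use Lemma~\ref{lem:NmuNh} together with the cost-free DDE continuation and the uniform LDP lower bound to obtain a uniform (over $\overline{\ball(\orbit,\mu)}$) one-step exit probability of order $e^{-(\overline{V}+\alpha)/\ve}$, (ii) use uniform attraction and Remark~\ref{rmk:distribution} to drive an arbitrary starting point into $\overline{\ball(\orbit,\mu)}$ in bounded time with probability bounded away from $0$, and (iii) combine via the Markov property and iterate. The argument is correct in substance. The only genuine differences are cosmetic. You define the target open set $G$ directly as paths that escape $\ball(\domain,h/4)$ and check $x^\psi\in G_{h/4}$, whereas the paper takes $G$ to be a union of $h$-balls around the constructed paths $\tilde{x}^\psi$; both give the same rate estimate. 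You run the ``arrival'' step at the deterministic time $t_0$, which is a small simplification over the paper's use of the hitting time $\theta^{\ve,\phi}$ (only the ordinary Markov property is needed in your version). For \eqref{eq:exitupper2} the paper simply applies Chebyshev to the expectation bound \eqref{eq:exitupper3}, which is cleaner than rerunning the iteration; your iteration also works but, as written, it controls $\P(\exit^{\ve,\phi}>e^{(\overline{V}+\alpha)/\ve})$ rather than $\P(\exit^{\ve,\phi}\ge e^{(\overline{V}+\alpha)/\ve})$, and you should either replace $n_\ve$ by $n_\ve-1$ (so that $(n_\ve-1)T<e^{(\overline{V}+\alpha)/\ve}$ gives $\{\exit\ge e^{(\overline{V}+\alpha)/\ve}\}\subset\{\exit>(n_\ve-1)T\}$) or just invoke Chebyshev; this is a trivial repair, not a gap.
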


\begin{proof}
Fix $\phi\in\domain$. Suppose that for each $\alpha>0$ there exists $\ve_0>0$ such that
\begin{equation}\label{eq:exitupper3}
	\ve\log\E\lsb\exit^{\ve,\phi}\rsb<\overline{V}+\frac{\alpha}{2},\qquad\ve\in(0,\ve_0).
\end{equation}
Then \eqref{eq:exitupper1} follows by first letting $\ve\to0$ and then $\alpha\to0$. Additionally, \eqref{eq:exitupper3} along with Chebyshev's inequality implies that for each $\alpha>0$ there exists $\ve_0>0$ such that
	$$\P\lb\exit^{\ve,\phi}\geq e^{(\overline{V}+\alpha)/\ve}\rb<e^{-\alpha/2\ve}\quad\text{for all }\ve\in(0,\ve_0).$$
Sending $\ve\to0$, we obtain \eqref{eq:exitupper2}. Therefore, we are left to prove that for each $\alpha>0$ there exists $\ve_0>0$ such that \eqref{eq:exitupper3} holds.

Fix $\alpha>0$. We first provide a lower bound on the probability that a solution of the SDDE exits $\domain$ in a finite time interval when starting near the orbit. By Lemma \ref{lem:NmuNh}, there are constants $\mu,h,T\in(0,\infty)$ such that for every $\psi\in\overline{\ball(\orbit,\mu)}$, there exists $T(\psi)\leq T$ and $\hat{x}^\psi\in C([-\delay,T(\psi)],\R^d)$ satisfying
\begin{equation}\label{eq:exitpath}
	\hat\x_0^\psi=\psi,\qquad\hat\x_{T(\psi)}^\psi\not\in\ball(\domain,h),\qquad I_{T(\psi)}(\hat\x^\psi)<\overline{V}+\frac{\alpha}{8}.
\end{equation}
For each $\psi\in\overline{\ball(\orbit,\mu)}$, define $\tilde x^\psi\in C([-\delay,T],\R^d)$ by
	\be\label{eq:tildexpsi}\tilde x^\psi(t)=\begin{cases}\hat x^\psi(t),&t\in[-\delay,T(\psi)),\\ x^{\hat x_{T(\psi)}^\psi}(t-T(\psi)),&t\in[T(\psi),T],\end{cases}\ee
where we recall that $x^{\hat x_{T(\psi)}^\psi}$ denotes the solution of the DDE with initial condition $\hat\x_{T(\psi)}^\psi$. By \eqref{eq:tildexpsi}, \eqref{eq:exitpath}, Lemma \ref{lem:concat}, and Remark \ref{rmk:ratexphi}, it follows that $\tilde x^\psi$ satisfies
	\be\label{eq:exitpath1}\tilde\x_0^\psi=\psi,\qquad\tilde\x_{T(\psi)}^\psi\not\in\ball(\domain,h),\qquad I_T(\tilde\x^\psi)<\overline{V}+\frac{\alpha}{8}.\ee
Define the open set $G\subset C([-\delay,T],\R^d)$ to be the union of open balls defined by
	\be\label{eq:Gunion}G=\bigcup_{\psi\in\overline{\ball(\orbit,\mu)}}\lcb x\in C([-\delay,T],\R^d):d_T(\tilde\x^\psi,x)<h\rcb.\ee
For $\eta>0$ define $G_\eta$ as in \eqref{eq:Geta}. It follows from \eqref{eq:Geta} and \eqref{eq:Gunion} that $\tilde x^\psi\in G_\eta$ for all $\psi\in\overline{\ball(\orbit,\mu)}$ and $\eta\in(0,h)$. Thus, by the definition of the rate function in \eqref{eq:rate} and \eqref{eq:exitpath1}, we have
	\be\label{eq:limetaIGeta}\lim_{\eta\to0}\sup_{\psi\in\overline{\ball(\orbit,\mu)}}I_T^\psi(G_\eta)\leq\sup_{\psi\in\overline{\ball(\orbit,\mu)}}I_T^\psi(\tilde x^\psi)<\overline{V}+\frac{\alpha}{8}.\ee
Suppose $\ve>0$ and $\phi\in\domain$. By \eqref{eq:Gunion}, \eqref{eq:exitpath1} and the definition of $\exit^{\ve,\phi}$ in \eqref{eq:exitvedelta}, we see that
\begin{align*}
	\lcb\X^{\ve,\phi}|_{[-\delay,T]}\in G\rcb&\subset\lcb d_T(\tilde{x}^\psi,\X^{\ve,\phi}|_{[-\delay,T]})<h\text{ for some }\psi\in\overline{\ball(\orbit,\mu)}\rcb\\
	&\subset\lcb\X_{T(\psi)}^{\ve,\phi}\not\in\domain\text{ for some }\psi\in\overline{\ball(\orbit,\mu)}\rcb\\
	&\subset\lcb\rho^{\ve,\phi}<T(\psi)\leq T\rcb.
\end{align*}
Therefore, by the uniform LDP lower bound shown in part 2 of Theorem \ref{thm:uniformLDP} and \eqref{eq:limetaIGeta}, there exists $\ve_0>0$ such that for all $\ve\in(0,\ve_0)$,
\begin{align}\label{eq:psiNmuexit}
	\inf_{\phi\in\overline{\ball(\orbit,\mu)}}\P\lb\exit^{\ve,\phi}<T\rb&\geq\inf_{\phi\in\overline{\ball(\orbit,\mu)}}\P\left(\X^{\ve,\phi}|_{[-\delay,T]}\in G\right)\\ \notag
	&\geq\exp\lcb-\ve^{-1}\lb\lim_{\eta\to0}\sup_{\phi\in\overline{\ball(\orbit,\mu)}} I_T^\phi(G_\eta)+\frac{\alpha}{8}\rb\rcb\\ \notag
	&>\exp\lcb-\ve^{-1}\lb \overline{V}+\frac{\alpha}{4}\rb\rcb.
\end{align}

Next we provided a lower bound on the probability that, given $\phi\in\domain$, the solution $\X^{\ve,\phi}$ will reach $\overline{\ball(\orbit,\mu)}$ within a finite time interval. Since $\domain$ is uniformly attracted to $\orbit$ (see Definition \ref{def:attracted}), there exists $S>0$ such that given any $\phi\in\domain$, we have
	\be\label{eq:rho0tgeqS}\x_t^\phi\in\ball(\orbit,\mu/2)\quad\text{for all }t\geq S.\ee
By Remark \ref{rmk:distribution}, 
	\be\label{eq:Xvephimu2}\lim_{\ve\to0}\sup_{\phi\in\domain}\P\lb\norm{\X^{\ve,\phi}-\x^\phi}_{[-\delay,S]}\geq\frac{\mu}{2}\rb=0.\ee
Let 
	\be\label{eq:thetavephi}\theta^{\ve,\phi}=\inf\lcb t\geq0:\X_t^{\ve,\phi}\in\overline{\ball(\orbit,\mu)}\rcb.\ee
Due to \eqref{eq:rho0tgeqS}, we see that $\norm{\X^{\ve,\phi}-\x^\phi}_{[-\delay,S]}<\mu/2$ implies $\X_S^{\ve,\phi}\in\ball(\orbit,\mu)$ and so $\theta^{\ve,\phi}<S$. Thus, by \eqref{eq:Xvephimu2}, there exists $\ve_1\in(0,\ve_0)$ satisfying
	\be\label{eq:STve0}\ve_1<\frac{\alpha}{4\log(2(S+T))},\ee
such that
	\be\label{eq:thetaveS}\P(\theta^{\ve,\phi}<S)\geq\frac{1}{2},\qquad\phi\in\domain,\;\ve\in(0,\ve_1).\ee

We are now ready to complete the proof. By the strong Markov property, \eqref{eq:thetavephi}, \eqref{eq:thetaveS} and \eqref{eq:psiNmuexit}, for all $\phi\in\domain$ and $\ve\in(0,\ve_1)$, we have
\begin{align*}
	\P(\exit^{\ve,\phi}<S+T)&\geq\P(\theta^{\ve,\phi}<S,\exit^{\ve,\phi}-\theta^{\ve,\phi}<T)\\
	&\geq\P(\theta^{\ve,\phi}<S)\cdot\inf_{\psi\in\overline{\ball(\orbit,\mu)}}\P(\exit^{\ve,\psi}<T)\\
	&\geq \frac{1}{2}\exp\lcb-\ve^{-1}\lb \overline{V}+\frac{\alpha}{4}\rb\rcb.
\end{align*}
Again invoking the strong Markov property, we have, for all $\phi\in\domain$ and $\ve\in(0,\ve_1)$,
\begin{align*}
	E[\exit^{\ve,\phi}]&\leq(S+T)\sum_{n=0}^\infty \P(\exit^{\ve,\phi}\geq n(S+T))\\
	&\leq(S+T)\sum_{n=0}^\infty\lsb\sup_{\psi\in\domain}\P(\exit^{\ve,\psi}\geq S+T)\rsb^n\\
	&\leq(S+T)\sum_{n=0}^\infty\lsb1-\inf_{\psi\in\domain}\P\left(\exit^{\ve,\psi}< S+T\right)\rsb^n\\
	&\leq(S+T)\left(\inf_{\psi\in\domain}\P(\exit^{\ve,\psi}< S+T)\right)^{-1}\\
	&\leq 2(S+T)\exp\lcb\ve^{-1}\lb \overline{V}+\frac{\alpha}{4}\rb\rcb.
\end{align*}
Along with \eqref{eq:STve0} this implies that that \eqref{eq:exitupper3} holds, thus completing the proof of the lemma.
\end{proof}

\subsection{Exit time lower bound}

In this section we prove lower bounds on the exit time $\exit^{\ve,\phi}$ defined in \eqref{eq:exitvedelta}.

\begin{lemma}\label{lem:exitlower}
Let $\phi\in\domain^s$. Then
	\be\label{eq:exitlower1}\liminf_{\ve\to0}\ve\log \E\lsb\exit^{\ve,\phi}\rsb\geq \underline{V},\ee
and for all $\alpha>0$,
	\be\label{eq:exitlower2}\lim_{\ve\to0}\P\lb\exit^{\ve,\phi}> e^{(\underline{V}-\alpha)/\ve}\rb=1.\ee
\end{lemma}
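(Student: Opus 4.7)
The plan is to follow the classical Freidlin-Wentzell cycle strategy, using the previously established Lemmas \ref{lem:phiS2muVbar} and \ref{lem:T0}. Fix $\alpha>0$; I will prove $\P(\exit^{\ve,\phi}>T(\ve))\to 1$ for $T(\ve):=e^{(\underline{V}-\alpha)/\ve}$. This yields \eqref{eq:exitlower2} directly, while \eqref{eq:exitlower1} follows from $\E[\exit^{\ve,\phi}]\geq T(\ve)\P(\exit^{\ve,\phi}>T(\ve))$ upon sending $\alpha\to 0$.

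First, I will invoke Lemma \ref{lem:phiS2muVbar} (with $\alpha$ replaced by a small multiple of itself) to select $\mu\in(0,\mu_0/2)$ so that, for small $\ve$, the per-cycle exit probability
\[
q_\ve:=\sup_{\psi\in\sphere(\orbit,2\mu)}\P\lb\X^{\ve,\psi}_{\sigma_\mu^{\ve,\psi}}\in\domain^c\rb\leq e^{-(\underline{V}-\alpha/2)/\ve},
\]
and Lemma \ref{lem:T0} to select $S>0$ so that, for small $\ve$,
\[
r_\ve:=\sup_{\psi\in\overline{\ball(\orbit,\mu)}}\P\lb\sup_{s\in[0,S]}d_0(\orbit,\X_s^{\ve,\psi})\geq 2\mu\rb\leq e^{-(\underline{V}+1)/\ve}.
\]
Although Lemma \ref{lem:T0} is stated for $\psi\in\sphere(\orbit,\mu)$, inspection of its proof shows the estimate extends to $\psi\in\overline{\ball(\orbit,\mu)}$, since only $d_0(\orbit,\psi)\leq\mu$ is used. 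Observe $\ball(\orbit,2\mu)\subset\ball(\orbit,\mu_0)\subset\domain^s\subset\domain$, so by continuity of $t\mapsto\X_t^{\ve,\psi}$ in $\C$ any exit from $\domain$ must first cross $\sphere(\orbit,2\mu)$.

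The core estimate is uniform over $\psi\in\overline{\ball(\orbit,\mu)}$. For such $\psi$, define $\theta_0:=0$ and, for $k\geq 0$,
\[
\tau_k:=\inf\lcb t\geq\theta_k:\X_t^{\ve,\psi}\in\sphere(\orbit,2\mu)\rcb,\quad\theta_{k+1}:=\inf\lcb t\geq\tau_k:\X_t^{\ve,\psi}\in\sphere(\orbit,\mu)\cup\domain^c\rcb,
\]
and $K:=\inf\lcb k\geq 1:\X_{\theta_k}^{\ve,\psi}\in\domain^c\rcb$, with $\inf\emptyset=\infty$. Since an exit must be preceded by a crossing of $\sphere(\orbit,2\mu)$, we have $\exit^{\ve,\psi}\geq\theta_K$. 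Setting $N:=\lceil T(\ve)/S\rceil$, I split
\[
\P(\exit^{\ve,\psi}\leq T(\ve))\leq\P(K\leq N)+\P\lb\theta_{N+1}\leq T(\ve),\,K>N\rb.
\]
Iterating the strong Markov property at $\tau_0,\tau_1,\ldots$ together with Lemma \ref{lem:phiS2muVbar} bounds the first term by $Nq_\ve$, which is of order $S^{-1}e^{-\alpha/(2\ve)}\to 0$. On the event $\{K>N,\theta_{N+1}\leq T(\ve)\}$ there must exist $k\leq N$ with $\tau_k-\theta_k<S$ (otherwise $\theta_{N+1}\geq(N+1)S>T(\ve)$), so a union bound combined with the strong Markov property at each $\theta_k$ and Lemma \ref{lem:T0} bounds the second term by $(N+1)r_\ve$, of order $S^{-1}e^{-(1+\alpha)/\ve}\to 0$. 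Both estimates are uniform in $\psi$.

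Finally, to handle a general $\phi\in\domain^s$: since $\phi\in\domain\subset\ball(\domain,\eta_0)$, by Definition \ref{def:attracted} there is $S_0>0$ with $\x_t^\phi\in\ball(\orbit,\mu/2)$ for $t\geq S_0$, and the compact trajectory $\{\x_t^\phi:t\in[0,S_0]\}\subset\domain$ has positive distance $h>0$ from $\domain^c$. By Remark \ref{rmk:distribution}, the event $A:=\lcb\norm{\X^{\ve,\phi}-\x^\phi}_{[-\delay,S_0]}<h\wedge(\mu/2)\rcb$ has $\P(A)\to 1$; on $A$ we have $\exit^{\ve,\phi}>S_0$ and $\X_{S_0}^{\ve,\phi}\in\ball(\orbit,\mu)$, so the strong Markov property at $S_0$ combined with the preceding uniform estimate yields $\P(\exit^{\ve,\phi}>T(\ve))\to 1$. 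The principal technical care will lie in the strong Markov accounting at the two families of stopping times $\tau_k$ and $\theta_k$, and in confirming the extension of Lemma \ref{lem:T0} to starting points in the closed ball — both of which I expect to be routine, with the cycle-counting bookkeeping being the most error-prone step.
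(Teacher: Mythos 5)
Your proposal is correct and follows the same Freidlin--Wentzell cycle-counting argument as the paper, relying on Lemmas~\ref{lem:phiS2muVbar} and~\ref{lem:T0}, the strong Markov property, and a union bound over excursions. Your handling of the initial segment from a general $\phi\in\domain^s$ (via Remark~\ref{rmk:distribution} at a deterministic time $S_0$, rather than the paper's use of Lemma~\ref{lem:deltaDexitmu} at the random hitting time $\sigma_\mu^{\ve,\phi}$) and your extension of Lemma~\ref{lem:T0} to starting points in $\overline{\ball(\orbit,\mu)}$ are both sound and amount to minor reorganizations of the same proof.
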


\begin{proof}
Fix $\phi\in\domain^s$. Suppose \eqref{eq:exitlower2} holds for all $\alpha>0$. Then by Chebyshev's inequality, for $\alpha>0$,
	$$\liminf_{\ve\to0}\ve\log E[\exit^{\ve,\phi}]\geq \underline{V}-\alpha.$$
Since $\alpha>0$ was arbitrary, this implies \eqref{eq:exitlower1} holds. Thus, we are left to prove \eqref{eq:exitlower2} holds for all $\alpha>0$.

Let $\alpha>0$. By Lemma \ref{lem:phiS2muVbar}, we can choose $\mu>0$ and $\ve_0>0$ such that
	\be\label{eq:Smuexitmu}\sup_{\psi\in \sphere(\orbit,2\mu)}\P\lb\X_{\sigma_\mu^{\ve,\psi}}^{\ve,\psi}\in\domain^c\rb<e^{-(\underline{V}-\alpha/2)/\ve},\qquad\ve\in(0,\ve_0),\ee
where $\sigma_\mu^{\ve,\phi}$ is defined as in \eqref{eq:exitmu}. Let $0\leq\xi_1^{\ve,\phi}<\upsilon_1^{\ve,\phi}<\xi_2^{\ve,\phi}<\cdots$ be the nested sequence of increasing $\{\F_t\}$-stopping times defined as follows: set 
	\be\label{eq:xi1}\xi_1^{\ve,\phi}=\sigma_\mu^{\ve,\phi}=\inf\lcb t\geq0:\X_t^{\ve,\phi}\in\sphere(\orbit,\mu)\cup\domain^c\rcb,\ee
and for $n\geq1$ such that $\X_{\xi_n}^{\ve,\phi}\in\sphere(\orbit,\mu)$, recursively define
\begin{align}\label{eq:upsilonn}
	\upsilon_n^{\ve,\phi}&=\inf\lcb t>\xi_n^{\ve,\phi}:\X_t^{\ve,\phi}\in \sphere(\orbit,2\mu)\rcb,\\ \label{eq:xin}
	\xi_{n+1}^{\ve,\phi}&=\inf\lcb t>\upsilon_n^{\ve,\phi}:\X_t^{\ve,\phi}\in \sphere(\orbit,\mu)\cup\domain^c\rcb.
\end{align}
If $\X_{\xi_n}^{\ve,\phi}\in\domain^c$ for some $n\geq1$, then set $\upsilon_n^{\ve,\phi}=\infty$ and end the sequence of stopping times. Observe that, by \eqref{eq:exitvedelta} and \eqref{eq:xi1}--\eqref{eq:xin}, $\exit^{\ve,\phi}<\infty$ implies $\exit^{\ve,\phi}=\xi_n^{\ve,\phi}$ for some $n\geq1$. Therefore, given $k\geq1$ and $T>0$, we have
	\be\label{eq:Pphisigmeve}\P(\exit^{\ve,\phi}<kT)=\P\lb\bigcup_{n=1}^\infty\{\exit^{\ve,\phi}=\xi_n^{\ve,\phi}<kT\}\rb.\ee
By \eqref{eq:exitvedelta}, \eqref{eq:xi1} and Lemma \ref{lem:deltaDexitmu}, we can choose $\ve_1\in(0,\ve_0)$ such that
	\be\label{eq:exitvesigma0}\P\lb\exit^{\ve,\phi}=\xi_1^{\ve,\phi}\rb<e^{-(\underline{V}-\alpha/2)/\ve},\qquad\ve\in(0,\ve_1).\ee
Given $\ve\in(0,\ve_1)$, by \eqref{eq:exitvedelta}, \eqref{eq:xin}, \eqref{eq:upsilonn}, the strong Markov property, \eqref{eq:exitmu} and \eqref{eq:Smuexitmu}, for each $n\geq1$,
\begin{align}\label{eq:exitvesigman}
	\P\lb\exit^{\ve,\phi}=\xi_{n+1}^{\ve,\phi}\rb&=\P\lb\upsilon_n^{\ve,\phi}<\infty,\exit^{\ve,\phi}=\xi_{n+1}^{\ve,\phi}\rb\\ \notag
	&\leq\sup_{\psi\in\sphere(\orbit,2\mu)}P\lb\X_{\sigma_\mu^{\ve,\psi}}^{\ve,\psi}\in\domain^c\rb\\ \notag
	&<e^{-(\underline{V}-\alpha/2)/\ve}.
\end{align}
By \eqref{eq:upsilonn}, \eqref{eq:xin}, the strong Markov property and Lemma \ref{lem:T0}, we can choose $S>0$ and $\ve_2\in(0,\ve_1)$ such that for all $\ve\in(0,\ve_2)$ and $n\geq1$,
\begin{align}\label{eq:thetansigman}
	\P(\upsilon_n^{\ve,\phi}-\xi_n^{\ve,\phi}\leq S)&\leq\sup_{\psi\in\sphere(\orbit,\mu)}P\lb\sup_{0\leq t\leq S}d_0(\orbit,\X_t^{\ve,\psi})\geq2\mu\rb\\ \notag
	&\leq e^{-(\underline{V}-\alpha/2)/\ve}.
\end{align}
By \eqref{eq:Pphisigmeve}--\eqref{eq:thetansigman}, for $k\geq1$ and all $\ve\in(0,\ve_2)$,
\begin{align}\label{eq:exitkT0}
	\P(\exit^{\ve,\phi}<kS)&\leq\sum_{n=1}^k \P(\exit^{\ve,\phi}=\xi_n^{\ve,\phi})+P\lb\min_{1\leq n\leq k}(\upsilon_n^{\ve,\phi}-\xi_n^{\ve,\phi})\leq S\rb\\ \notag
	&\leq ke^{-(\underline{V}-\alpha/2)/\ve}+\sum_{n=1}^kP\lb\upsilon_n^{\ve,\phi}-\xi_n^{\ve,\phi}\leq S\rb\\ \notag
	&\leq2ke^{-(\underline{V}-\alpha/2)/\ve},
\end{align}
where the first inequality uses the fact that 
	$$\bigcup_{n=k+1}^\infty\{\xi_n^{\ve,\phi}< kS\}\subset\lcb\min_{1\leq n\leq k}(\upsilon_n^{\ve,\phi}-\xi_n^{\ve,\phi})\leq S\rcb.$$
From the estimate \eqref{eq:exitkT0}, with 
	$$k=\max\lcb\ell\geq1:\ell\leq S^{-1}e^{(\underline{V}-\alpha)/\ve}\rcb+1,$$ 
we have for all $\ve\in(0,\ve_2)$,
	$$\P\lb\exit^{\ve,\phi}\leq e^{(\underline{V}-\alpha)/\ve}\rb\leq \P(\exit^{\ve,\phi}<kS)\leq 2S^{-1}e^{-\alpha/2\ve}+2e^{-(\underline{V}-\alpha/2)/\ve}.$$
Letting $\ve\to0$ yields \eqref{eq:exitlower2}, which completes the proof of the lemma.
\end{proof}

\subsection{Proof of Lemma \ref{lem:Vupperlower}}\label{sec:Vupperlower}

\begin{proof}[Proof of Lemma \ref{lem:Vupperlower}]
It suffices to show that $\underline{V}\geq\overline{V}$. Let $\alpha>0$ and choose
	\be\label{eq:eta}0<\eta<\min\lb\frac{\alpha}{4M_a(\lip_1+1)\delta},\frac{\delta}{2}\rb.\ee
By \eqref{eq:Vlower} and \eqref{eq:Veta}, $V_\eta$ is nondecreasing as $\eta\to0$ and we can choose $\psi\in\C$ such that $d_0(\domain^c,\psi)<\eta$ and $V(\psi)<V_\eta+\alpha\leq\underline{V}+\alpha$. Then, according to \eqref{eq:quasipotential}, there exists $T^\dagger>0$ and $x^\dagger\in C([-\delay,T^\dagger],\R^d)$ such that $\x_0^\dagger=\phi^\ast$, $\x_{T^\dagger}^\dagger=\psi$ and 
	\be\label{eq:IT1xdagger}I_{T^\dagger}(\x^\dagger)<V(\psi)+\alpha<\underline{V}+2\alpha.\ee
Note that 
	$$\domain^c=\lcb\phi\in\C:|\phi(s)-\nu^\ast|\geq\delta\text{ for some }s\in[-\delay,0]\rcb.$$
Since $d_0(\domain^c,\x_0^\dagger)=\delta$, $d_0(\domain^c,\x_{T^\dagger}^\dagger)<\eta$ and the function $t\to d_0(\domain^c,\x_t^\dagger)$ from $[0,T^\dagger]$ to $\R_+$ is continuous, there exists $S\in[0,T^\dagger]$ such that $|\x^\dagger(t)-\nu^\ast|<\delta-\eta$ for all $t\in[-\delay,S)$ and
	\be\label{eq:xdaggerSdeltaeta}|\x^\dagger(S)-\nu^\ast|=\delta-\eta.\ee
By Lemma \ref{lem:concat} and \eqref{eq:IT1xdagger},
	\be\label{eq:ISxdagger} I_S(\x^\dagger|_{[-\delay,S]})\leq I_{T^\dagger}(\x^\dagger)<\underline{V}+2\alpha.\ee 
Define $T=S+\frac{2\eta}{\delta-\eta}$ and $\x\in C([-\delay,T],\R^d)$ by
	\be\label{eq:xdaggerST}\x(t)=\begin{cases}\x^\dagger(t),&t\in[-\delay,S],\\ \x^\dagger(S)+(t-S)(\x^\dagger(S)-\nu^\ast),&t\in(S,T].\end{cases}\ee
Then \eqref{eq:xdaggerST}, the definition of $T$ and \eqref{eq:xdaggerSdeltaeta} imply $|x(T)-\nu^\ast|=\delta+\eta$, so $\x_T\not\in\overline{\domain}$. By \eqref{eq:Vbar}, the fact that $x_T\not\in\overline{\domain}$, Lemma \ref{lem:concat} and \eqref{eq:ISxdagger}, we have
\begin{align}\label{eq:VoverVunderITS2alpha}
	\overline{V}\leq I_T\lb\x\rb=I_S\lb\x^\dagger|_{[-\delay,S]}\rb+I_{T-S}\lb\x^S\rb<\underline{V}+I_{T-S}\lb\x^S\rb+2\alpha,
\end{align}
where $\x^S\in C([-\delay,T-S],\R^d)$ is defined by $\x^S(t)=\x(S+t)$ for $t\in[-\delay,T-S]$. We have the following inequalities, which we explain below:
\begin{align*}
	I_{T-S}(\x^S)&\leq\frac{1}{2}\int_S^T|(a(\x_s))^{-1}||\f(\x_s)-\f(\phi^\ast)-\dot{\x}(s)|^2ds\\
	&\leq M_a(T-S)\lb2\lip_1\sup_{s\in[S-\delay,T]}|\x(s)-\nu^\ast|^2+2|\x^\dagger(S)-\nu^\ast|^2\rb\\
	&\leq 4M_a(\lip_1+1)(\delta-\eta)\eta\\
	&<\alpha.
\end{align*}
The first inequality is due to Lemma \ref{lem:rateAC}, the definition of $\x^S$, and the fact that $\f(\phi^\ast)=0$ since $\phi^\ast$ is an equilibrium point of the DDE. The second inequality follows from \eqref{eq:Mabound}, the continuity of $\f$ (Assumption \ref{ass:lip}), and the fact that $x$ is differentiable on $(S,T)$ with derivative equal to $\frac{dx(t)}{dt}=x^\dagger(S)-\nu^\ast$ for all $t\in(S,T)$ by \eqref{eq:xdaggerST}. The third inequality is due the definition of $T$, the definition of $x$ in \eqref{eq:xdaggerST}, and \eqref{eq:xdaggerSdeltaeta}. The last inequality is due to \eqref{eq:eta}. By \eqref{eq:VoverVunderITS2alpha} and the last display, $\overline{V}<\underline{V}+3\alpha$. Since $\alpha>0$ was arbitrary, this completes the proof that $\underline{V}\geq\overline{V}$.
\end{proof}

{\bf Acknowledgments}. The author is grateful to Ruth Williams for suggesting this problem. The author thanks Amarjit Budhiraja, Paul Dupuis and Mickey Salins for conversations concerning uniform LDPs over bounded sets and for sharing a preprint of their recent work \cite{Budhiraja2017} on this topic. The author also thanks Mickey Salins for pointing out an error in an earlier statement of Theorem \ref{thm:uniformLDP}, and Kavita Ramanan for several helpful comments on a preliminary draft of this work.

\bibliographystyle{plain}
\bibliography{sdde}

\end{document}